\documentclass[11pt]{article}
\usepackage[a4paper, margin=2.5cm]{geometry}
\usepackage[shortlabels]{enumitem}
\usepackage{amsmath,multicol}
\usepackage{amssymb,latexsym}
\usepackage{amsthm}
\usepackage{color}
\usepackage{cancel}
\usepackage{graphicx}
\usepackage[noadjust]{cite}
\usepackage{changes}
\usepackage{lineno}
\usepackage[hidelinks]{hyperref}
\usepackage{comment,cleveref}
\usepackage{amscd}
\usepackage{tikz}

\allowdisplaybreaks

\DeclareMathOperator{\GammaL}{\Gamma\mathrm{L}}

\newtheorem{theorem}{Theorem}[section]

\newtheorem{lemma}[theorem]{Lemma}
\newtheorem{corollary}[theorem]{Corollary}
\newtheorem{proposition}[theorem]{Proposition}
\newtheorem{result}[theorem]{Result}
\newtheorem{construction}[theorem]{Construction}

    \theoremstyle{definition}
\newtheorem{definition}[theorem]{Definition}
\newtheorem{example}[theorem]{Example}
\newtheorem{remark}[theorem]{Remark}
\newtheorem{notation}[theorem]{Notation}

\newcommand{\fqn}{\mathbb{F}_{q^n}}

\newcommand{\F}{{\mathbb F}}

\newcommand{\fq}{{\mathbb F}_{q}}

\renewcommand{\mod}{\hbox{{\rm mod}\,}}

  \renewcommand{\epsilon}{\varepsilon}
\newcommand{\PG}{\mathrm{PG}}
\newcommand{\pg}{\PG}

\newcommand{\e}{\mathbf e}
\DeclareMathOperator{\ww}{w}
\DeclareMathOperator{\GL}{GL}

\newcommand{\floor}[1]{\left \lfloor #1 \right \rfloor}



\title{On the minimum size of linear sets}
\author{
 Sam Adriaensen \\ \textit{Vrije Universiteit Brussel} \and Paolo Santonastaso \\ \textit{Università degli
Studi della Campania} \\ \textit{``Luigi Vanvitelli''}
}
\date{ }

\begin{document}
\maketitle

\begin{abstract}
 Recently, a lower bound was established on the size of linear sets in projective spaces, that intersect a hyperplane in a canonical subgeometry.
 There are several constructions showing that this bound is tight.
 In this paper, we generalize this bound to linear sets meeting some subspace $\pi$ in a canonical subgeometry.
 We obtain a tight lower bound on the size of any $\fq$-linear set spanning $\pg(d,q^n)$ in case that $n \leq q$ and $n$ is prime.
 We also give constructions of linear sets attaining equality in the former bound, both in the case that $\pi$ is a hyperplane, and in the case that $\pi$ is a lower dimensional subspace.
\end{abstract}

\noindent
\textbf{Keywords:} Projective geometry, Linear set, Subgeometry.\\
\textbf{MSC2020:} 51E20, 05B25.

\section{Introduction}

Linear sets are certain point sets in projective spaces, generalizing the notion of a subgeometry.
They have proven themselves to be very useful in constructing interesting objects in projective spaces, such as blocking sets \cite{sziklai2008onsmall} and KM-arcs \cite{deboeck2016alinearsetview}, and have been used to construct Hamming and rank metric codes \cite{polverino2022divisible,Napolitano2023twopoint,alfarano2021linearcutting,sheekey2016new, sheekeyVdV,polverino2020connections,zini2021scattered}.
For a survey on linear sets, we refer the reader to \cite{lavrauw2015field,polverino2010linear}.

Given the usefulness of linear sets, their recent spurt in popularity within the field of finite geometry is far from surprising.
One of the most natural questions arising in the study of linear sets is establishing lower and upper bounds on their size.
There is a quite trivial upper bound on the size of linear sets, and the study of linear sets attaining equality in this bound can be traced back to a paper by Blokhuis and Lavrauw \cite{blokhuis2000scattered}.
However, finding good lower bounds on the size of linear sets seems to be a harder problem.
Yet it is an interesting endeavor, e.g.\ due to its connection with the weight distribution of linear rank metric codes \cite{PolvSanZullo}.

As a consequence of the celebrated result on the number of directions determined by a function over a finite field \cite{blokhuis1999number,ball2003number}, Bonoli and Polverino established a lower bound on the size of certain linear sets on a projective line.
More specifically, they proved the following result (for the definitions, we refer to \Cref{sec:Preliminaries}).

\begin{result}[{\cite[Lemma 2.2]{bonoli2005fqlinear}}]
 \label{res:MinSizeLineRankN}
 If $L_U$ is an $\fq$-linear set of rank $n$ on $\PG(1,q^n)$, and $L_U$ contains at least one point of weight 1, then $|L_U| \geq q^{n-1} + 1$.
\end{result}

De Beule and Van de Voorde managed to remove the condition on the rank from this bound.
We note that linear sets of rank greater than $n$ on $\PG(1,q^n)$ are not interesting to study, since they necessarily contain all the points of the projective line.
Hence it is natural to limit the study to linear sets whose rank is at most $n$.

\begin{result}[{\cite[Theorem 1.2]{debeule2019theminimumsize}}]
 \label{res:JanGeertruiLine}
 If $L_U$ is an $\fq$-linear set of rank $k$, with $1 < k \leq n$ on $\PG(1,q^n)$, and $L_U$ contains at least one point of weight 1, then $|L_U| \geq q^{k-1} + 1$.
\end{result}

Using an inductive argument, they obtained a bound on the size of a linear set in a higher dimensional projective space.
Using \Cref{lm:subgeometry}, which we prove later in this paper, this is equivalent to the following result.

\begin{result}[{\cite[Theorem 4.4]{debeule2019theminimumsize}}]
 \label{res:JanGeertrui}
 Let $L_U$ be an $\fq$-linear set of rank $k > d$ in $\PG(d,q^n)$.
 If $L_U$ meets some hyperplane $\Omega$ in a canonical $\fq$-subgeometry of $\Omega$, then
 \[
  |L_U| \geq q^{k-1} + q^{k-2} + \ldots + q^{k-d} + 1.
 \]
\end{result}

De Beule and Van de Voorde note directly after their statement of the above result that they would like to find lower bounds on linear sets satisfying less restrictive conditions.
Furthermore, Jena and Van de Voorde \cite[\S 2.5 (B)]{Jena2021onlinearsets} state that they believe the above lower bound to hold for all $\fq$-linear sets of rank $k$ that span $\pg(d,q^n)$, if $n$ is prime and $k \leq d+n$.

In this article we will generalize the above result by dropping the condition that $\Omega$ is a hyperplane.

\begin{theorem}
 \label{thm:OurBound}
 Let $L_U$ be an $\fq$-linear set of rank $k$ in $\PG(d,q^n)$.
 Suppose that there exists some $(r-1)$-space $\Omega$, with $r < k$, such that $L_U$ meets $\Omega$ in a canonical $\fq$-subgeometry of $\Omega$.
 Then
 \begin{equation*}
  |L_U| \geq q^{k-1} + \ldots + q^{k-r} + I_\Omega,
 \end{equation*}
 where $I_\Omega$ denotes the number of $r$-spaces through $\Omega$, containing a point of $L_U \setminus \Omega$.
\end{theorem}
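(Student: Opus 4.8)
The plan is to reduce the statement to the hyperplane case recorded in \Cref{res:JanGeertrui} by projecting from $\Omega$. Let $V$ be the underlying $\fqn$-vector space, so that $\pg(d,q^n) = \pg(V)$, and let $W \leq V$ be the subspace with $\Omega = \pg(W)$, whence $\dim_{\fqn} W = r$. Since $L_U \cap \Omega = L_{U \cap W}$ is a canonical $\fq$-subgeometry of $\Omega$, I take $\dim_{\fq}(U \cap W) = r$ and $|L_U \cap \Omega| = (q^r-1)/(q-1)$. Consider the quotient map $\pi \colon V \to V/W$ and set $\bar U = \pi(U)$. Then $\bar U \cong U/(U \cap W)$ has $\fq$-dimension $k-r$, so $L_{\bar U}$ is an $\fq$-linear set of rank $k-r$ in $\pg(V/W)$. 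The points of $\pg(V/W)$ are in bijection with the $r$-spaces through $\Omega$, and a short verification shows that such an $r$-space contains a point of $L_U \setminus \Omega$ precisely when the corresponding point lies in $L_{\bar U}$; hence $I_\Omega = |L_{\bar U}|$.

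Next I would slice $L_U \setminus \Omega$ along these $r$-spaces. For each $P = \langle \bar v \rangle \in L_{\bar U}$, let $\Sigma_P = \pg(W + \langle v \rangle)$ be the associated $r$-space through $\Omega$. Distinct such $r$-spaces meet exactly in $\Omega$, so the sets $L_U \cap (\Sigma_P \setminus \Omega)$ partition $L_U \setminus \Omega$ as $P$ ranges over $L_{\bar U}$. I would then check that $L_U \cap \Sigma_P = L_{U \cap (W + \langle v \rangle)}$ is an $\fq$-linear set in the $r$-dimensional space $\Sigma_P$ of rank $r + w_P$, where $w_P$ is the weight of $P$ in $L_{\bar U}$ (applying rank--nullity to $\pi$ restricted to $U \cap (W + \langle v \rangle)$), and that it meets the hyperplane $\Omega$ of $\Sigma_P$ in the same canonical subgeometry. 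As $w_P \geq 1$, the rank $r + w_P$ strictly exceeds $\dim \Sigma_P = r$, so \Cref{res:JanGeertrui} applies and yields
\[
 |L_U \cap \Sigma_P| \geq q^{r + w_P - 1} + \ldots + q^{w_P} + 1 .
\]
Subtracting $|L_U \cap \Omega| = (q^r - 1)/(q-1)$ then bounds $|L_U \cap (\Sigma_P \setminus \Omega)|$ from below.

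Finally I would sum these local bounds over all $P \in L_{\bar U}$ and add back $|L_U \cap \Omega|$. The crucial arithmetic input is the weight identity $\sum_{P \in L_{\bar U}} (q^{w_P} - 1) = q^{k-r} - 1$, valid for any linear set of rank $k-r$. Substituting $\sum_P q^{w_P} = q^{k-r} - 1 + I_\Omega$ into the summed estimate, the contributions of the form $(q^r-1)/(q-1)$ cancel against one another and against $|L_U \cap \Omega|$, leaving exactly $q^{k-1} + \ldots + q^{k-r} + I_\Omega$. The conceptual heart is the reduction by projection; the main obstacle is the bookkeeping around it: verifying that every slice $\Sigma_P$ genuinely meets $\Omega$ in the \emph{full} canonical subgeometry so that \Cref{res:JanGeertrui} applies uniformly, pinning down each slice's rank as $r + w_P$ through the projection, and ensuring the weight identity produces the precise cancellation rather than a mere order-of-magnitude estimate.
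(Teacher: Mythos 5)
Your proposal is correct and follows essentially the same route as the paper: partition $L_U \setminus \Omega$ along the $r$-spaces through $\Omega$, apply the De Beule--Van de Voorde bound (\Cref{res:JanGeertrui}) inside each such $r$-space where $\Omega$ is a hyperplane met in a canonical subgeometry, and sum using the identity $\sum_P (q^{w_P}-1) = q^{k-r}-1$, which the paper obtains by counting the vectors of $U \setminus W$ distributed over the subspaces $W_i$ (equivalent to your weight identity for the projection). The only cosmetic difference is that you phrase the slice ranks and the count $I_\Omega$ via the projected linear set $L_{\overline U}$, which the paper formalizes separately in \Cref{lem:projectionlinear}.
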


The theorem leads us to wonder, given a linear set, how we can assure the existence of a large subspace intersecting it in a canonical subgeometry.
This question turns out to be closely related to studying which linear sets must certainly have a point of weight 1.
Csajbók, Marino, and Pepe \cite{csajbok2023maximum} recently proved the following seminal result.

\begin{result}[{\cite[Theorem 2]{csajbok2023maximum}}] \label{th:pointweightgreaterfield}
Let $L_U$ be an $\F_q$-linear set of $\PG(d,q^n)$ of rank $k \leq dn$, such that the following assumptions are satisfied:
\begin{enumerate}
    \item $n\leq q$;
    \item every point of $L_U$ has weight at least $w \geq 2$.
\end{enumerate}
Then there exists an integer $t$ with $w \leq t \mid n$ such that $L_U=L_{U'}$, with $U'=\langle U \rangle_{\F_{q^t}}$.
\end{result}

Especially when $n$ is prime, this is a powerful result.
In that case, a linear sets without points of weight 1 must coincide with a subspace as point sets.
This allows us to prove the following theorem.

\begin{theorem} \label{th:prime}
 Suppose that $n$ is a prime number with $n \geq q$.
 Let $L_U$ be an $\fq$-linear set in $\pg(d,q^n)$ spanning the whole space.
 Define $r = d - \floor{\frac{k-(d+2)}{n-1}}$.
 Then $L_U$ meets some $(r-1)$-space in a canonical subgeometry and
 \[
  |L_U| \geq q^{k-1} + \ldots + q^{k-r} + \frac{q^{n(d-r+1)}-1}{q^n-1}.
 \]
 Moreover, this lower bound is tight.
\end{theorem}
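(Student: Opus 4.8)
The plan is to produce an $(r-1)$-space $\Omega$ that meets $L_U$ in a canonical subgeometry and through which \emph{every} $r$-space contains a point of $L_U\setminus\Omega$; then the quantity $I_\Omega$ of \Cref{thm:OurBound} equals the total number $\frac{q^{n(d-r+1)}-1}{q^n-1}$ of $r$-spaces through $\Omega$, and the stated inequality is exactly \Cref{thm:OurBound} applied to $\Omega$ (legitimately, since $r\le d<d+1\le k$ because $L_U$ spans). The value of $r$ is dictated by a Grassmann count: if $\Omega$ has dimension $s-1$ and $L_U\cap\Omega$ is a canonical subgeometry, then $\dim_{\fq}(U\cap\widehat\Omega)=s$ (writing $\widehat{\;}$ for the underlying $\fqn$-space), while $\dim_{\fq}(U\cap\widehat\Omega)\ge k-n(d-s+1)$, forcing $s\le\frac{n(d+1)-k}{n-1}$, i.e.\ $s\le r$. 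Rewriting the hypothesis as $r=\lceil\frac{dn-k+2}{n-1}\rceil$, this says $r-1$ is the largest dimension a canonical subgeometry inside $L_U$ could possibly have.

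To build $\Omega$ I would induct on its dimension. Given a canonical subgeometry of dimension $s-1$ spanning a subspace $\Omega_s$ with $L_U\cap\Omega_s$ equal to it, project $L_U$ from $\Omega_s$ to obtain an $\fq$-linear set $\bar L$ of rank $k-s$ spanning $\PG(d-s,q^n)$. If $\bar L$ contains a point of weight $1$, lift it to $P\in L_U\setminus\Omega_s$; then $L_U\cap\langle\Omega_s,P\rangle=L_{U\cap\widehat{\langle\Omega_s,P\rangle}}$ has rank $s+1$ and spans the $s$-space $\langle\Omega_s,P\rangle$, so by \Cref{lm:subgeometry} it is a canonical subgeometry of dimension $s$ extending the previous one. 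The existence of the weight-$1$ point is where \Cref{th:pointweightgreaterfield} enters: if every point of $\bar L$ had weight $\ge 2$ and $k-s\le (d-s)n$, then, $n$ being prime, $\bar L$ would be $\fqn$-linear, hence a subspace whose rank is too small to span $\PG(d-s,q^n)$ — a contradiction. Thus a weight-$1$ point exists and the induction proceeds whenever $k-s\le(d-s)n$, which one checks covers every step $s<r$ except possibly the final one.

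Once a canonical subgeometry of dimension $r-1$ spanning $\Omega=\Omega_r$ is in hand, a second Grassmann estimate closes the lower bound: for any point $P$ one has $\dim_{\fq}(U\cap\widehat{\langle\Omega,P\rangle})\ge k-(d-r)n\ge r+2$, the last inequality since $r=\lceil\frac{dn-k+2}{n-1}\rceil$ gives $k\ge(d-r)n+r+2$. As $L_U\cap\Omega$ has rank $r$, the space $\langle\Omega,P\rangle$ always contains a point of $L_U$ off $\Omega$, so the projection from $\Omega$ is all of $\PG(d-r,q^n)$, $I_\Omega=\frac{q^{n(d-r+1)}-1}{q^n-1}$, and \Cref{thm:OurBound} yields the claim. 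I expect the \emph{main obstacle} to be the boundary case in which $\frac{n(d+1)-k}{n-1}$ is an integer: there the rank at the last projection exceeds $(d-s)n$, so \Cref{th:pointweightgreaterfield} does not apply directly and one must argue by hand that a weight-$1$ point survives. This is precisely where primality of $n$ is indispensable — a projection with all weights $\ge 2$ and these parameters would be a constant higher-weight linear set, which would force a proper subfield $\F_{q^t}$ with $1<t\mid n$, impossible when $n$ is prime.

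For tightness I would exhibit an explicit $U$ meeting the bound. Choosing coordinates so that $\Omega=\PG(r-1,q^n)$ carries the canonical subgeometry $\langle e_1,\dots,e_r\rangle_{\fq}$, I would take $U$ to be the sum of this subspace with a minimum-size $\fq$-linear set supported on a complementary quotient that covers $\PG(d-r,q^n)$ with the smallest possible overlap, arranged so that each successive projection meets the line bound of \Cref{res:JanGeertruiLine} with equality. Verifying that the total size equals $q^{k-1}+\dots+q^{k-r}+\frac{q^{n(d-r+1)}-1}{q^n-1}$ then reduces to checking that equality propagates through the projections, which ties directly into the extremal constructions announced for \Cref{thm:OurBound}.
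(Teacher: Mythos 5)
Your proposal follows essentially the same route as the paper's proof: produce an $(r-1)$-space $\Omega$ meeting $L_U$ in a canonical subgeometry, observe via a Grassmann count that every $r$-space through $\Omega$ contains a point of $L_U\setminus\Omega$ (so $I_\Omega$ is the full count $\frac{q^{n(d-r+1)}-1}{q^n-1}$), and conclude by \Cref{thm:OurBound}. The paper organizes the first step differently but equivalently: it takes $s$ maximal such that some $(s-1)$-space meets $L_U$ in a canonical subgeometry, notes that the projection from it then has no point of weight $1$, and reads off $s=r$ from \Cref{lm:NPrimeNoPtsOfWt1}; your inductive extension together with your Grassmann upper bound $s\le r$ amounts to the same computation.

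Two local issues would need repair before this is a complete proof. First, your handling of the boundary case $k=(d-r+1)n+r$ is not right as stated: there the projection from the $(r-2)$-dimensional subgeometry has rank $(d-r+1)n+1$ in $\pg(d-r+1,q^n)$, which lies outside the range of \Cref{th:pointweightgreaterfield}, and no subfield/primality argument applies (or is needed) at that step. The correct fix is pure dimension counting: a linear set of rank $d'n+1$ in $\pg(d',q^n)$ covers the whole space, yet if it had no point of weight $1$ it would coincide, by \Cref{Lm:PointOfWeight1}, with a linear set of rank at most $d'n$, which cannot cover the space; this is \Cref{Lm:AllPoints} and holds for every $n$. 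Second, tightness is part of the statement and your construction is only a sketch; the paper's example (\Cref{Constr:Prime}) is simply $U=U_1\times\fq^r$ with $U_1$ an $\fq$-subspace of $\fqn^{d-r+1}$ of dimension $k_1$ satisfying $(d-r)n+2\le k_1\le(d-r+1)n$ — the quotient part is a \emph{full cover} of $\pg(d-r,q^n)$ rather than a minimum-size linear set, and the size count is a short direct computation. With those two repairs your argument coincides with the paper's.
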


Note in particular that this confirms the previously mentioned belief of Jena and Van de Voorde \cite[\S 2.5 (B)]{Jena2021onlinearsets} -- that all $\fq$-linear sets of rank $k \leq d+n$ spanning $\pg(d,q^n)$, $n$ prime, satisfy the lower bound of \ref{res:JanGeertrui} -- in case that $n \leq q$.

Also in the case where $n$ is not prime, \Cref{th:pointweightgreaterfield} is interesting from the point of view of lower bounding the size of a linear set.
It ensures that we can take $r=1$ in \Cref{thm:OurBound}, i.e.\ $\Omega$ is a point, in case that $\fq$ is the maximum geometric field of linearity of $L_U$.

In $\PG(1,q^n)$, the bound of De Beule and Van de Voorde is tight.
For every rank $k \leq n$, there exist so-called $(k-1)$-clubs of rank $k$.
These linear sets contain (an abundance of) points of weight 1, and their size matches the bound in \Cref{res:JanGeertruiLine}.
Lunardon and Polverino \cite{lunardon2000blocking} provided the first less trivial family of linear sets of rank $n$ reaching equality in \Cref{res:MinSizeLineRankN}.
Their example was extended by Jena and Van de Voorde \cite{Jena2021onlinearsets} to a very large family of linear sets of general rank, attaining equality in \Cref{res:JanGeertruiLine}.
More recently, there have been other constructions of such linear sets, and partial classification results, see Napolitano et al.\ \cite{napolitano2022classification}.
Moreover, Jena and Van de Voorde generalized their constructions to higher dimensions, to obtain linear sets attaining equality in the bound of \Cref{res:JanGeertrui}, some of which also satisfy the conditions of \Cref{res:JanGeertrui} \cite[\S 2.5 (B)]{Jena2021onlinearsets}.

In this article, we study the construction by Jena and Van de Voorde in general dimension, and we provide a sufficient condition for these linear sets to satisfy the hypothesis of \Cref{res:JanGeertrui}.
We also generalize the construction of Napolitano et al.\ to higher dimensions.
Furthermore, we construct linear sets in $\PG(d,q^n)$ satisfying the conditions of \Cref{thm:OurBound}, and attaining equality in the corresponding bound, where $n$ is not prime.
The size of these linear sets is smaller than the bound from \Cref{res:JanGeertrui}, hence this illustrates the necessity of the conditions imposed in \Cref{res:JanGeertrui} in case $n$ is not prime.

\bigskip

\textbf{Structure of the paper.}
\Cref{sec:Preliminaries} contains preliminary results on linear sets.
\Cref{sec:GeneralBounds} contains the proof of \Cref{thm:OurBound,th:prime}.
In addition, we deduce from \Cref{thm:OurBound} that the rank of a linear set is determined by its size and the minimum weight of its points, and that it is spanned by its points of minimum weight.
In \Cref{sec:d-minSize} we discuss linear sets attaining equality in \Cref{res:JanGeertrui}.
More specifically, we show a sufficient condition for the minimum size linear sets of \cite{Jena2021onlinearsets} to satisfy the hypothesis of \Cref{res:JanGeertrui}, and we generalize the construction from \cite{napolitano2022classification} to higher dimension.
\Cref{sec:BelowBound} contains constructions of linear sets attaining equality in \Cref{thm:OurBound}.

\section{Preliminaries}
 \label{sec:Preliminaries}

Throughout this article, $q$ will always denote a prime power, and $\fq$ will denote the finite field of order $q$.
The $d$-dimensional projective space over $\fq$ will be denoted by $\PG(d,q)$.
If the projective space is constructed from a $(d+1)$-dimensional $\fq$-vector space $V$, and we want to emphasize the underlying vector space, we might also denote the projective space as $\PG(V,\fq)$.
We note that the number of points in $\PG(d,q)$ equals $\frac{q^{d+1}-1}{q-1} = q^d + q^{d-1} + \ldots + q + 1$.

\begin{notation}
 Throughout the article, when working in $\PG(d,q) = \PG(\fq^{d+1},\fq)$, we denote the vectors of $\fq^{d+1}$ as $(x_0, \dots, x_{d})$, i.e.\ we label the coordinate positions from 0 to $d$.
 The $i$\textsuperscript{th} standard basis vector will be denoted as
 \[
  \e_i = (0, \dots, 0, \underbrace 1 _{i^\text{th} \text{ position}}, 0 \dots, 0),
 \]
 and the corresponding point in $\PG(d,q)$ will be denoted as $E_i$.
\end{notation}

\subsection{Linear sets}

Let $V$ be a $(d+1)$-dimensional vector space over $\fqn$.
Then $V$ is also a $(d+1)n$-dimensional vector space over $\fq$.
Let $U$ denote an $\fq$-subspace of $V$.
Then
\[
 L_U=\{\langle u \rangle_{\fqn} \colon u\in U\setminus\{\mathbf{0}\}\}
\]
is a set of points in $\PG(d,q^n)$.
Sets of this type are called \emph{$\fq$-linear sets}, and the $\fq$-dimension of $U$ is called the \emph{rank} of $L_U$.

We note that if $U_1$ and $U_2$ are $\fq$-subspaces, and $L_{U_1}$ and $L_{U_2}$ are equal as point set in $\PG(d,q^n)$, this need not imply that $\dim_{\fq} U_1 = \dim_{\fq} U_2$.
Hence, the rank of a linear set $L_U$ is generally defined ambiguously by $L_U$ as point set in $\PG(d,q^n)$, without taking into account the underlying subspace $U$.

Given an $\fqn$-subspace $W \leq V$, we define the \emph{weight} of $\Omega = \PG(W,\fqn)$ to be
\[
 \ww_{L_U}(\Omega) = \dim_{\fq} (U \cap W).
\]
Note that $\ww_{L_U}(\Omega)$ equals the rank of the linear set $L_{U \cap W} = L_U \cap \Omega$.

For each $i \in \{1,\dots,n\}$, let $N_i(L_U)$ denote the number of points in $\PG(d,q^n)$ of weight $i$.
We will simply denote this as $N_i$ if $L_U$ is clear from context.
The numbers $N_1, \dots, N_n$ are called the \emph{weight distribution} of $L_U$.
In addition, the \emph{weight spectrum} of $L_U$ is the ordered tuple $(i_1,\dots,i_t)$ with $i_1 < \ldots < i_t$ and
\[
 \{ i_1, \dots, i_t \} = \{ \ww_{L_U}(P) : P \in L_U \} = \{ i \in \{1,\dots,n\} : N_i > 0\}.
\]
Let $k>0$ denote the rank of $L_U$.
Then the weight distribution satisfies the following properties.
\begin{align}
    |L_U| &=N_1+\ldots+N_n, \label{eq:pesicard} \\
    \sum_{i=1}^n N_i \frac{q^i-1}{q-1}&=\frac{q^k-1}{q-1}, \label{eq:pesivett} \\
    |L_U| &\leq \frac{q^k-1}{q-1}, \label{eq:card} \\
    |L_U| &\equiv 1 \ (\mod q). \label{eq:modqsize}
\end{align}

Let $T$ be an $\F_q$-subspace of $V$ with $\dim_{\F_q}(T)=r \leq d+1$.
If $\dim_{\F_{q^n}}(\langle T \rangle_{\F_{q^n}})=r$, we will say that $L_T \cong \PG(T,\F_q)=\PG(r-1,q)$ is an $\fq$-\emph{subgeometry} of $\PG(V,\F_{q^n})$ and $r$ is the \emph{rank} of the subgeometry $L_T$.
When $r=d+1$, we say that $L_T$ is a \emph{canonical subgeometry} of $\PG(V,\fqn)=\PG(d,q^n)$. 
Note that each point of a subgeometry $L_T$ has weight 1 and hence $\lvert L_T \rvert =\frac{q^{r}-1}{q-1}$, if $L_T$ has rank $r$.

Regarding the linearity of a linear set, we recall the following definitions explored in \cite{Jena2021thegeometricfield}.

\begin{definition}[{\cite[Definitions 1.1, 1.2]{Jena2021thegeometricfield}}]
An $\F_{q}$-linear set $L_U$ is  an \emph{$\F_{q^s}$-linear set} if $U$ is also an $\F_{q^s}$-vector space. 
We say that $\F_{q^s}$ is the \emph{maximum field of linearity} of $L_U$ if $s$ is the largest exponent such that $L_U$ is $\F_{q^s}$-linear.
\end{definition}

\begin{definition}[{\cite[Definitions 1.3, 1.4]{Jena2021thegeometricfield}}] \label{def:geometricfield}
An $\F_q$-linear set $L_U$ has \emph{geometric field of linearity} $\F_{q^s}$ if there exists an $\F_{q^s}$-linear set $L_{U'}$ such that $L_U=L_{U'}$. An $\F_q$-linear set $L_U$ has \emph{maximum geometric field of linearity} $\F_{q^s}$ if $s$ is the
largest integer such that $L_U$ has geometric field of linearity $\F_{q^s}$.
\end{definition}

The maximum field of linearity and the maximum geometric field of linearity do not always coincide.
Clearly if $L_U$ is an $\F_{q^s}$-linear set, it has geometric field of linearity $\F_{q^s}$, but the converse need not hold, see e.g.\ \cite[Example 1.5]{Jena2021thegeometricfield}.

\begin{remark} \label{rk:secantgeometricfield}
Note that if there exists a line $\ell$ that is $(q+1)$-secant to a linear set $L_U$, then by \eqref{eq:modqsize} $ L_U$ has maximum geometric field of linearity $\F_q$, see also \cite{Jena2021thegeometricfield}.
\end{remark}

We refer to \cite{polverino2010linear} and \cite{lavrauw2015field} for comprehensive references on linear sets.

\subsection{Subspaces of complementary weights}

Recently, there has been an interest in linear sets admitting subspaces of complementary weights (see below for the definition), due to their application in coding theory, see e.g. \cite{polverino2022divisible, Napolitano2023twopoint, Zullo2023multi-orbit}.
Linear sets on the projective line admitting two points of complementary weights have been studied in \cite{napolitano2022linearsets} (see also \cite{Jena2021thegeometricfield,napolitano2022classification}).
The higher dimensional analogue has been studied in \cite{Zullo2023multi-orbit}.
For the sake of completeness, we state the definition and prove the structural description of such linear sets here in full generality.

Call subspaces $W_1, \dots, W_m \leq_{q^n} \fqn^{d+1}$ \emph{independent} if each subspace $W_i$ intersects $\langle W_j : j \neq i \rangle_{\fqn}$ trivially, or equivalently if $\dim_{\fqn} \langle W_i : i=1, \dots, m \rangle = \dim_{\fqn} W_1 + \ldots + \dim_{\fqn} W_m$.

\begin{lemma}
 \label{lm:IndependentSubspaces}
 Let $W_1, \dots, W_m$ be independent subspaces in $\fqn^{d+1}$, and let $L_U$ be an $\fq$-linear set in $\PG(d,q^n)$ of rank $k$, that spans the entire space.
 Then
 \[
  \ww_{L_U}( \pg(W_1,\fqn)) + \ldots + \ww_{L_U}(\pg(W_m,\fqn)) \leq k.
 \]
 If equality holds, then $\fqn^{d+1} = W_1 \oplus \ldots \oplus W_m$.
\end{lemma}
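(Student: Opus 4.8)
The plan is to reduce the whole statement to a clean $\fq$-dimension count, exploiting the fact that $\fqn$-independence of the $W_i$ descends to $\fq$-independence of the intersections $U \cap W_i$.

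First I would unwind the definition of weight. By definition $\ww_{L_U}(\pg(W_i,\fqn)) = \dim_{\fq}(U \cap W_i)$, so the left-hand side of the claimed inequality is exactly $\sum_{i=1}^m \dim_{\fq}(U \cap W_i)$. The hypothesis that $W_1, \dots, W_m$ are independent says precisely that their sum is direct over $\fqn$, i.e.\ $\langle W_i : i \rangle_{\fqn} = W_1 \oplus \dots \oplus W_m$. Since a direct-sum decomposition is a statement about the underlying additive group only, the same subspaces remain in direct sum when regarded as $\fq$-spaces. In particular the $\fq$-subspaces $U \cap W_i \subseteq W_i$ are independent over $\fq$, so
\[
 \dim_{\fq}\Bigl(\textstyle\sum_{i=1}^m (U \cap W_i)\Bigr) = \sum_{i=1}^m \dim_{\fq}(U \cap W_i).
\]
Because each $U \cap W_i$ lies inside $U$, the sum $\sum_i (U \cap W_i)$ is an $\fq$-subspace of $U$, whence $\sum_i \dim_{\fq}(U \cap W_i) \le \dim_{\fq} U = k$. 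This settles the inequality.

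For the equality case I would trace back through these (in)equalities. Equality forces $\dim_{\fq}\bigl(\bigoplus_i (U \cap W_i)\bigr) = \dim_{\fq} U$; since the left-hand side is a subspace of $U$, equality of dimensions gives $U = \bigoplus_{i=1}^m (U \cap W_i)$. Now I would bring in the hypothesis that $L_U$ spans the whole space, which means $\langle U \rangle_{\fqn} = \fqn^{d+1}$. Taking $\fqn$-spans of both sides of $U = \sum_i (U \cap W_i)$ and using $\langle U \cap W_i \rangle_{\fqn} \subseteq W_i$ yields
\[
 \fqn^{d+1} = \langle U \rangle_{\fqn} = \sum_{i=1}^m \langle U \cap W_i \rangle_{\fqn} \subseteq \sum_{i=1}^m W_i \subseteq \fqn^{d+1}.
\]
Hence $\sum_i W_i = \fqn^{d+1}$, and combined with the assumed independence of the $W_i$ this is exactly $\fqn^{d+1} = W_1 \oplus \dots \oplus W_m$.

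The argument is essentially elementary linear algebra, so there is no serious obstacle; the only point requiring genuine care — and the step I would flag as the crux — is the transfer of independence from the $\fqn$-level, where it is assumed, down to the $\fq$-level, where the dimension count actually takes place. I would also note that the spanning hypothesis enters, and is needed, solely in the equality case, to upgrade $U = \bigoplus_i (U\cap W_i)$ to a statement about the ambient space $\fqn^{d+1}$.
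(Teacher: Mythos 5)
Your proof is correct and follows essentially the same route as the paper's: both reduce the statement to an $\fq$-dimension count using the fact that the $\fqn$-directness of $W_1 \oplus \dots \oplus W_m$ makes the subspaces $U \cap W_i$ independent over $\fq$, and both handle the equality case by taking $\fqn$-spans and invoking the spanning hypothesis. The only cosmetic difference is that the paper routes the inequality through $\dim_{\fq}(U \cap (W_1 \oplus \dots \oplus W_m))$ and concludes $U \subseteq W_1 \oplus \dots \oplus W_m$ directly, whereas you conclude the (equivalent, in the equality case) decomposition $U = \bigoplus_i (U \cap W_i)$ first.
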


\begin{proof}
Since no $W_i$ intersects the span of the others, it is permitted to consider the direct sum $W_1 \oplus \ldots \oplus W_m$.
Then
\begin{align*}
 k &= \dim_{\fq} U
 \geq \dim_{\fq} (U \cap (W_1 \oplus \ldots \oplus W_m))
 \geq \dim_{\fq} (U \cap W_1) + \ldots + \dim_{\fq} (U \cap W_m) \\
 &= \ww_{L_U}(\PG(W_1,\fqn)) + \ldots + \ww_{L_U}(\PG(W_m,\fqn))
\end{align*}
If equality holds, then
\[
 U \cap (W_1 \oplus \ldots \oplus W_m) = U.
\]
Since $W_1 \oplus \ldots \oplus W_m$ is an $\fqn$-subspace, and $\langle U \rangle_{\fqn} = \fqn^{d+1}$, we get that
\[
 W_1 \oplus \ldots \oplus W_m = \fqn^{d+1}. \qedhere
\]
\end{proof}

\begin{definition}
 If the subspaces $W_1, \dots, W_m$ attain equality in \Cref{lm:IndependentSubspaces}, we say that $\pg(W_1,\fqn),\dots,\pg(W_m,\fqn)$ are \emph{subspaces of complementary weight} (w.r.t.\ $L_U$).
\end{definition}

\begin{lemma}
 Let $L_U$ be an $\fq$-linear set spanning $\PG(d,q^n)$.
 Then there exist subspaces $\Omega_1, \dots, \Omega_m$ in $\PG(d,q^n)$  of complementary weight, with $\dim \Omega_i = d_i$ and $\ww_{L_U}(\Omega_i) = k_i$, if and only if $U$ is $\GL(d+1,q^n)$-equivalent to an $\fq$-subspace $U_1 \times \ldots \times U_m$, with each $U_i$ a $k_i$-dimensional $\fq$-subspace of $\fqn^{d_i+1}$ satisfying $\langle U_i \rangle_{\fqn} = \fqn^{d_i+1}$.
\end{lemma}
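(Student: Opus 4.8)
The plan is to prove both implications by tracking dimensions, letting the collapse of the inequality chain in \Cref{lm:IndependentSubspaces} do most of the work. Throughout, write $\Omega_i = \pg(W_i,\fqn)$ with $\dim_{\fqn} W_i = d_i+1$.

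\emph{Forward direction.} Suppose $\Omega_1,\dots,\Omega_m$ are of complementary weight. By \Cref{lm:IndependentSubspaces} this yields $\fqn^{d+1} = W_1 \oplus \ldots \oplus W_m$ together with $\sum_i \ww_{L_U}(\Omega_i) = k$. Set $U_i := U \cap W_i$, so that $\dim_{\fq} U_i = \ww_{L_U}(\Omega_i) = k_i$. Since the $W_i$ are independent, so are the $U_i$; hence $U_1 \oplus \ldots \oplus U_m \leq U$ has $\fq$-dimension $\sum_i k_i = k = \dim_{\fq} U$, forcing $U = U_1 \oplus \ldots \oplus U_m$. The one point that genuinely requires the spanning hypothesis is that $\langle U_i \rangle_{\fqn} = W_i$ for each $i$: from $\langle U \rangle_{\fqn} = \fqn^{d+1}$ we get $\fqn^{d+1} = \langle U_1 \rangle_{\fqn} + \ldots + \langle U_m \rangle_{\fqn}$, and since each $\langle U_i \rangle_{\fqn} \leq W_i$ sits inside the direct sum $W_1 \oplus \ldots \oplus W_m$, comparing dimensions forces $\langle U_i \rangle_{\fqn} = W_i$. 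Finally, I would pick $\varphi \in \GL(d+1,q^n)$ taking $W_i$ onto the $i$-th coordinate block $\fqn^{d_i+1}$; then $\varphi(U) = U_1' \times \ldots \times U_m'$ with $U_i' = \varphi(U_i)$ a $k_i$-dimensional $\fq$-subspace of $\fqn^{d_i+1}$ satisfying $\langle U_i' \rangle_{\fqn} = \fqn^{d_i+1}$, as required.

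\emph{Converse.} After applying the equivalence I may assume $U = U_1 \times \ldots \times U_m$ outright, where necessarily $\sum_i (d_i+1) = d+1$. Letting $W_i$ be the $i$-th coordinate block and $\Omega_i = \pg(W_i,\fqn)$, it is immediate that $\fqn^{d+1} = W_1 \oplus \ldots \oplus W_m$ and that the $W_i$ are independent. Then $U \cap W_i = U_i$ gives $\ww_{L_U}(\Omega_i) = \dim_{\fq} U_i = k_i$, while $\langle U_i \rangle_{\fqn} = \fqn^{d_i+1}$ confirms $\langle U \rangle_{\fqn} = \fqn^{d+1}$, i.e.\ that $L_U$ spans $\pg(d,q^n)$. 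Since the rank equals $\dim_{\fq} U = \sum_i k_i = \sum_i \ww_{L_U}(\Omega_i)$, equality holds in \Cref{lm:IndependentSubspaces}, so the $\Omega_i$ are of complementary weight.

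The argument is essentially pure dimension bookkeeping. The only step that genuinely uses the hypothesis $\langle U \rangle_{\fqn} = \fqn^{d+1}$ is the deduction $\langle U_i \rangle_{\fqn} = W_i$ (equivalently $\langle U_i' \rangle_{\fqn} = \fqn^{d_i+1}$) in the forward direction; I expect this to be the sole subtle point, with everything else following directly from the collapse of the inequality chain in \Cref{lm:IndependentSubspaces} and the freedom to choose $\varphi$ block-diagonalizing the decomposition.
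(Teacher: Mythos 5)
Your proof is correct and follows essentially the same route as the paper's: set $U_i = U \cap W_i$, use the collapse of the inequality chain in \Cref{lm:IndependentSubspaces} to get $U = (U\cap W_1) \oplus \ldots \oplus (U \cap W_m)$ and $\fqn^{d+1} = W_1 \oplus \ldots \oplus W_m$, and conjugate the $W_i$ onto coordinate blocks by an element of $\GL(d+1,q^n)$. If anything, you are slightly more complete than the paper, which does not explicitly verify the condition $\langle U_i \rangle_{\fqn} = \fqn^{d_i+1}$ in the forward direction; your dimension comparison supplies that step.
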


\begin{proof}
First suppose that such subspaces $\Omega_i = \PG(W_i,\fqn)$ exist.
Then there exists a map $\varphi \in \GL(d+1,q^n)$ such that $\varphi(W_1) = \langle \e_0, \dots, \e_{d_1} \rangle_{\fqn}$, $\varphi(W_2) = \langle \e_{d_1 + 1}, \dots, \e_{d_1 + d_2 + 1} \rangle_{\fqn}$, and so on.
As can be seen in the proof of the previous lemma,
\begin{align*}
 \varphi(U)
 = \varphi(U \cap W_1) \oplus \ldots \oplus \varphi(U \cap W_m),
\end{align*}
which equals $U_1 \times \ldots \times U_m$, with
\[
 U_i = \{ u \in \fqn^{d_i+1} : (0, \ldots, 0, u, 0, \dots, 0) \in \varphi(U) \cap \varphi(W_i) \}. 
\]
Clearly,
\[
 \dim_{\fq} U_i = \dim_{\fq} \varphi(U \cap W_i)
 = \dim_{\fq} U \cap W_i
 = \ww_{L_U}(\Omega_i) = k_i.
\]

Vice versa, suppose that $\varphi(U) = U_1 \times \ldots \times U_m$, with each $U_i$ a $k_i$-dimensional $\fq$-subspace of $\fqn^{d_i+1}$, for some $\varphi \in \GL(d+1,q^n)$.
Then define 
\begin{align*}
 W_1 = \langle \e_0, \ldots, \e_{d_1} \rangle_{\fqn}, &&
 W_2 = \langle \e_{d_1+1}, \ldots, \e_{d_1 + d_2 + 1} \rangle_{\fqn}, &&
 \dots
\end{align*}
Then clearly $\PG(W_1, \fqn), \dots, \PG(W_m,\fqn)$ are subspaces of complementary weights w.r.t.\ $L_U$.
Having subspaces of complementary weights is $\GL(d+1,q^n)$-invariant, which finishes the proof.
\end{proof}

\section{General bounds}
 \label{sec:GeneralBounds}
This section is devoted to proving \Cref{thm:OurBound,th:prime}. 
From \Cref{thm:OurBound} we derive that if a linear set $L_U$ contains a point of weight 1, its rank equals $\lceil \log_q(|L_U|) \rceil$, and $\langle L_U \rangle$ is spanned by the points of $L_U$ of weight 1.

\subsection{Proof of \texorpdfstring{\Cref{thm:OurBound}}{Theorem 1.4}}
 
De Beule and Van de Voorde proved the following bound.

\begin{result}[{\cite[Theorem 4.4]{debeule2019theminimumsize}}]
 Let $L_U$ be an $\fq$-linear set spanning $\PG(d,q^n)$ of rank $k$.
 Suppose that $L_U$ meets some hyperplane $\Omega$ in exactly $\frac{q^d-1}{q-1}$ points, spanning $\Omega$.
 Then
 \[
  |L_U| \geq q^{k-1} + q^{k-2} + \ldots + q^{k-d} + 1.
 \]
\end{result}

Note that if $d=1$, this result is exactly \Cref{res:JanGeertruiLine}.
We now prove that this result is equivalent to \Cref{res:JanGeertrui}.
This follows directly from the following lemma.

\begin{lemma}
 \label{lm:subgeometry}
Let $L_U$ be an $\fq$-linear set in $\PG(d-1,q^n)$, with $d \geq 2$.
Then $L_U$ spans $\PG(d-1,q^n)$ and satisfies $|L_U| = \frac{q^d-1}{q-1}$ if and only if $L_U$ is a canonical $\fq$-subgeometry.
\end{lemma}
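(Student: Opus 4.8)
The forward direction is immediate from the definitions recalled in \Cref{sec:Preliminaries}: if $L_U$ is a canonical $\fq$-subgeometry, then by definition $\langle L_U \rangle = \PG(d-1,q^n)$, and since every point of a subgeometry has weight $1$, its size is $\frac{q^d-1}{q-1}$. The content lies in the converse, and the plan is to prove by induction on $d$ the following sharper statement: \emph{every $\fq$-linear set $L_U$ spanning $\PG(d-1,q^n)$ satisfies $|L_U| \geq \frac{q^d-1}{q-1}$, with equality if and only if $L_U$ is a canonical $\fq$-subgeometry.} Carrying the equality characterization through the induction — not merely the inequality — is what will make the argument close. The base case $d=1$ is trivial, as the only linear set spanning a point is that point, of size $1=\frac{q-1}{q-1}$, a canonical subgeometry of rank $1$.

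For the inductive step, fix a point $P=\langle v\rangle_{\fqn}\in L_U$ of weight $w=\ww_{L_U}(P)$ and project from $P$ onto a complementary $\PG(d-2,q^n)$. The image is the linear set $L_{\bar U}$, where $\bar U$ is the image of $U$ in $V/\langle v\rangle_{\fqn}$; it spans $\PG(d-2,q^n)$ and has rank $k-w$, with $k=\dim_\fq U$. Two ingredients then combine. First, by the induction hypothesis $|L_{\bar U}|\geq \frac{q^{d-1}-1}{q-1}$. Second, each line $\ell$ through $P$ meeting $L_U$ in more than one point in fact meets it in at least $q+1$ points: indeed $L_U\cap\ell$ is an $\fq$-linear set on a projective line, so $|L_U\cap\ell|\equiv 1\pmod q$ by \eqref{eq:modqsize}, and a value exceeding $1$ is therefore at least $q+1$. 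Since the points of $L_{\bar U}$ are exactly the lines through $P$ meeting $L_U\setminus\{P\}$, counting $L_U$ line by line through $P$ gives
\[
 |L_U| = 1 + \sum_{R\in L_{\bar U}}\big(|L_U\cap \ell_R|-1\big) \;\geq\; 1 + q\,|L_{\bar U}| \;\geq\; 1 + q\cdot\frac{q^{d-1}-1}{q-1} \;=\; \frac{q^d-1}{q-1},
\]
where $\ell_R$ denotes the line through $P$ corresponding to $R$.

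For the equality case, observe that $|L_U|=\frac{q^d-1}{q-1}$ forces both inequalities above to be equalities, and this holds no matter which point $P$ we project from. Equality in the induction bound forces $|L_{\bar U}|=\frac{q^{d-1}-1}{q-1}$, so by the induction hypothesis $L_{\bar U}$ is a canonical subgeometry, necessarily of rank $d-1$; hence $k-w=d-1$, i.e.\ $\ww_{L_U}(P)=k-d+1$ for \emph{every} point $P$. Thus all points of $L_U$ share a common weight $w=k-d+1$, so \eqref{eq:pesicard} and \eqref{eq:pesivett} collapse to $\frac{q^d-1}{q-1}\cdot\frac{q^w-1}{q-1}=\frac{q^k-1}{q-1}$ with $k=d+w-1$. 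Clearing denominators and simplifying yields $(q^{w-1}-1)(q^d-q)=0$; since $d\geq 2$ the second factor is nonzero, forcing $w=1$ and hence $k=d$. A spanning $\fq$-linear set whose rank equals the dimension of the ambient vector space is exactly a canonical subgeometry, which closes the induction.

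The main obstacle is the equality analysis rather than the bound itself. The key realization is that tightness must occur for the projection from \emph{every} point of $L_U$ simultaneously; this is what promotes the weight data from "one point" to "all points have a common weight", after which the weight equations \eqref{eq:pesicard}--\eqref{eq:pesivett} finish the job numerically. Two smaller points requiring care are the bookkeeping that projection of a linear set from a point of weight $w$ is again a spanning linear set of rank $k-w$, and the discipline of stating the induction hypothesis with its equality clause so that it can be invoked to pin down the rank of $L_{\bar U}$.
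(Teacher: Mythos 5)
Your induction breaks at the bottom, and the break is not cosmetic. The strengthened statement you carry through the induction --- ``equality holds if and only if $L_U$ is a canonical subgeometry'' --- is false for $d=1$: every nonempty $\fq$-linear set in $\PG(0,q^n)$ is the single point and has size $1=\frac{q-1}{q-1}$, but its rank $\dim_{\fq}U$ can be anything from $1$ to $n$, so it need not be a canonical subgeometry (which requires rank $1$). Consequently, in the inductive step for $d=2$ you cannot conclude from $|L_{\bar U}|=1$ that $L_{\bar U}$ has rank $d-1=1$: the projection of $L_U$ from a point $P$ of weight $w$ is a single point of rank $k-w$, and nothing forces $k-w=1$. (For instance, $U=\{(x,x^{q^2}) : x\in\F_{q^4}\}\subseteq\F_{q^4}^2$ projected from any of its points yields a single point of rank $2$.) Reading ``canonical subgeometry'' as a mere point-set property does not help: then the base case becomes vacuously true but the inference ``necessarily of rank $d-1$'' fails at exactly the same spot. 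Since the identity $\ww_{L_U}(P)=k-d+1$ for every $P$ is precisely what feeds your computation with \eqref{eq:pesicard} and \eqref{eq:pesivett}, the case $d=2$ is unproven, and every case $d\geq 3$ rests on it.

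The gap is repairable, but it needs a genuine extra argument: take $d=2$ as the base case and show directly that a spanning linear set of rank $k$ on $\PG(1,q^n)$ with exactly $q+1$ points has $k=2$. For example: if some point $P$ has weight $1$, then \Cref{lm:IndependentSubspaces} bounds every other weight by $k-1$, and \eqref{eq:pesivett} forces all $q$ remaining points to have weight exactly $k-1$; two such points again violate \Cref{lm:IndependentSubspaces} unless $k=2$. If instead every weight is at least $2$, then every weight is at most $k-2$, and \eqref{eq:pesivett} gives $q^k-1\leq(q+1)(q^{k-2}-1)<q^k-1$, a contradiction. With that base case installed, your projection-and-secant-line induction does go through for $d\geq 3$ and is a genuinely different route from the paper's, which avoids induction altogether by splitting on whether the points of weight greater than $1$ span the space and then applying \Cref{lm:IndependentSubspaces} to $d$ independent such points. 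Also note that your equality analysis only needs the projection from a \emph{single} point of $L_U$, not from all of them, since the induction hypothesis already pins down the rank of $L_{\bar U}$ and hence the weight of that one point; the ``all points simultaneously'' framing is harmless but not where the difficulty lies.
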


\begin{proof}
If $L_U$ is a canonical subgeometry, then it immediately follows that $L_U$ spans the entire space, and $|L_U| = \frac{q^d-1}{q-1}$.
So suppose that $L_U$ spans the space, and that $|L_U| = \frac{q^d-1}{q-1}$.
We need to prove that all points of $L_U$ have weight 1.
Indeed in that case, by equations \eqref{eq:pesicard} and \eqref{eq:pesivett}, $L_U$ must then have rank $d$, which proves that $L_U$ is a canonical subgeometry.
So suppose by way of contradiction that $L_U$ has points of weight greater than 1.
Note that by Equations \eqref{eq:pesicard} and \eqref{eq:pesivett}, the rank of $L_U$ is some number $k > d$.
Let
\[
 \sigma = \langle P \in L_U \colon \ww_{L_U}(P) > 1 \rangle
\]
denote the subspace of $\PG(d-1,q^n)$ spanned by points of weight greater than 1.

Suppose that $\sigma$ is not $\PG(d-1,q^n)$. 
Then $L_U \not \subseteq \sigma$, and every point in $L_U \setminus \sigma$ is a point of weight 1.
Hence, there are at least $q^{k-1}$ points in $L_U \setminus \sigma$ corresponding to (necessarily distinct) points of weight 1 of $L_U$.
Thus, $|L_U| > q^{k-1} > \frac{q^d-1}{q-1}$ since $k > d$, a contradiction.

Hence $\sigma$ equals $\PG(d-1,q^n)$.
Let
\[
 m = \max_{P \in L_U} \ww_{L_U}(P)
\]
denote the maximum weight of the points of $L_U$.
Then we can choose $d$ independent points $P_1, \dots, P_d$ in $L_U$ such that $\ww_{L_U}(P_1) = m$, and $\ww_{L_U}(P_i) \geq 2$ for each $i$.
By \Cref{lm:IndependentSubspaces},
\[
 k \geq \sum_{i=1}^d \ww_{L_U}(P_i) \geq m + 2(d-1).
\]
Let $N_1,\ldots,N_m$ denote the weight distribution of $L_U$.
Then by Equations \eqref{eq:pesicard} and \eqref{eq:pesivett},
\[
 \frac{q^m-1}{q-1} |L_U| = \sum_{i=1}^m N_i \frac{q^m-1}{q-1}
 \geq \sum_{i=1}^m N_i \frac{q^i-1}{q-1} = \frac{q^k-1}{q-1}
 \geq \frac{q^{m+2(d-1)}-1}{q-1}.
\]
This implies that
\[
  \frac{q^d-1}{q-1} = |L_U| \geq \frac{q^{m+2(d-1)}-1}{q^m-1},
\]
which yields a contradiction if $d \geq 2$.
\end{proof}

We will now prove \Cref{thm:OurBound}.\\

\noindent \textit{Proof of \Cref{thm:OurBound}.}
Consider the $r$-spaces $\Pi_1,\Pi_2,\ldots$ of $\PG(d,q^n)$ through $\Omega = \PG(W,\fqn)$, with $\Pi_i=\PG(W_i,\F_{q^n})$, for each $i$.
We can order the $r$-spaces in such a way that $\Pi_i$ contains a point of $L_U \setminus \Omega$ if and only if $i \leq I_{\Omega}$.
Let 
\[
k_i= \dim_{\fq} (U \cap W_i)
\]
denote the rank of the $\fq$-linear set $L_{U \cap W_i}$.
Then the sets $W_i \cap U \setminus W$ partition the vectors in $U \setminus W$. 
Since $L_U$ intersects $\Omega$ in a canonical subgeometry, $\dim_{\fq} U \cap W = r$.
This yields 
\begin{align}
 \label{eq:sumki}
q^{k} - q^r &= \sum_{i = 1}^{I_\Omega} (q^{k_i} - q^r) &&
\implies && q^{k-r}  =1 + \sum_{i=1}^{I_\Omega} (q^{k_i - r} -1).
\end{align}

Analogously, the points of $\Pi_i \setminus \Omega$ partition the points of $L_U \setminus \Omega$. 
Note that for $i \leq I_{\Omega}$, we have that $L_U \cap \Pi_i= L_{U \cap W_i}$ is an $\F_q$-linear set in $\Pi_i$ of rank $k_i$, satisfying the hypothesis of \Cref{res:JanGeertrui}.
Hence,
\begin{align*}
 |L_U|
 & = |L_U \cap \Omega| + \sum_{i=1}^{I_\Omega} \left( |L_U \cap \Pi_i| - |L_U \cap \Omega| \right) \\
 & \geq \frac{q^r-1}{q-1} + \sum_{i=1}^{I_\Omega} \left( (q^{k_i-1} + \ldots + q^{k_i-r} + 1) - \frac{q^r-1}{q-1} \right) \\
 & = \frac{q^r-1}{q-1} + \sum_{i=1}^{I_\Omega} \left( q^{k_i-r} \frac{q^r-1}{q-1} - \frac{q^r-1}{q-1} + 1 \right) \\
 & = \frac{q^r-1}{q-1} \left(1 + \sum_{i=1}^{I_\Omega} (q^{k_i-r} - 1) \right) + I_\Omega.
\end{align*}
Using Equation \eqref{eq:sumki} this implies that
\[
\lvert L_U \rvert \geq \frac{q^r-1}{q-1} q^{k-r} + I_\Omega = q^{k-1}+q^{k-2}+\ldots+q^{k-r}+I_{\Omega}.
\tag*{\qedsymbol}
\]

\begin{remark}
If one wants to apply \Cref{thm:OurBound} to a particular linear set $L_U$, different choices of the $(r-1)$-space $\Omega$ can yield different bounds.
In other words, $I_\Omega$ need not be the same for all $(r-1)$-spaces meeting $L_U$ in a canonical $\fq$-subgeometry.
This is illustrated in the example below. 
\end{remark}

\begin{example}
Consider the $(n+1)$-dimensional $\fq$-subspace
\[
 U = \{ (x,x^q) : x \in \fqn\} \times \fq
\]
of $\fqn^3$.
Consider the corresponding $\fq$-linear set $L_U$ of rank $n+1$ in $\PG(2,q^n)$.
Every point of $L_U$ has weight 1, so we can apply \Cref{thm:OurBound} with $\Omega$ any point of $L_U$.
However, for a point $P \in L_U$, $I_P = q^{n-1} + 1$ if $P$ lies on the line $X_2=0$, and $I_P = \frac{q^n-1}{q-1}$ if $P$ does not lie on $X_2=0$.
These numbers are distinct if $n>2$.
\end{example}

We also remark that in \Cref{thm:OurBound} the number $I_\Omega$ of $r$-spaces through $\Omega$ containing a point of $L_U \setminus \Omega$ equals the size of a certain linear set.

\begin{definition}
 Consider an $\fq$-linear set $L_U$ in $\pg(V,\fqn)$ and a subspace $\Omega = \pg(W,\fqn)$.
 Let $\overline U$ denote the subspace $(U+W)/W$ of the quotient space $V/W$.
 Then the \emph{projection} of $L_U$ from $\Omega$ is the $\fq$-linear set $L_{\overline U}$ of $\pg(V/W,\fqn)$.
\end{definition}

\begin{lemma} \label{lem:projectionlinear}
 Suppose that $L_U$ is an $\fq$-linear set of rank $k$ in $\pg(V,\fqn)$ and let $L_{\overline U}$ be the projection of $L_U$ from an $(r-1)$-space $\Omega = \pg(W,\fqn)$.
 Then for each $\fqn$-subspace $W' \leq V$ through $W$,
 \[
  \ww_{L_{\overline U}}(\pg((W' + W)/W,\fqn)) = \ww_{L_U}(\pg(W',\fqn)) - \ww_{L_U}(\Omega).
 \]
 In particular, $L_{\overline U}$ has rank $k - \ww_{L_U}(\Omega)$, and $|L_{\overline U}|$ equals the number of $r$-spaces in $\pg(V,\fqn)$ through $\Omega$ that contain a point of $L_U \setminus \Omega$. Furthermore, if $L_U$ spans $\pg(V,\fqn)$, then $L_{\overline{U}}$ spans $\pg(V/W,\fqn)$.
\end{lemma}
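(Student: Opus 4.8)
The plan is to work entirely through the $\fqn$-linear quotient map $\pi \colon V \to V/W$, whose kernel is $W$. Under $\pi$ there is the usual inclusion-preserving correspondence between $\fqn$-subspaces of $V/W$ and $\fqn$-subspaces of $V$ containing $W$; this already identifies the objects in the first displayed equation, since for $W \le W'$ one has $(W'+W)/W = W'/W = \pi(W')$, while $\overline U = (U+W)/W = \pi(U+W)$. So the whole lemma reduces to understanding how $\pi$ interacts with the $\fq$-subspace $U$, and the first and principal task is to establish the weight identity.

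For that identity I would first compute the relevant intersection in the quotient. Using the general fact that $\pi(A)\cap\pi(B)=\pi\bigl((A+W)\cap(B+W)\bigr)$ for a quotient map with kernel $W$, applied with $A=U+W$ and $B=W'$, gives
\[
 \overline U \cap (W'/W) = \pi\bigl((U+W)\cap W'\bigr).
\]
Since $\pi$ restricted to $(U+W)\cap W'$ has kernel exactly $W$ (as $W \le U+W$ and $W\le W'$), taking $\fq$-dimensions yields $\ww_{L_{\overline U}}(\pg(W'/W,\fqn)) = \dim_{\fq}\bigl((U+W)\cap W'\bigr) - \dim_{\fq} W$. The crux is now Dedekind's modular law: because $W \le W'$, we have $(U+W)\cap W' = (U\cap W') + W$, and the standard dimension formula together with $(U\cap W')\cap W = U\cap W$ gives
\[
 \dim_{\fq}\bigl((U+W)\cap W'\bigr) = \dim_{\fq}(U\cap W') + \dim_{\fq} W - \dim_{\fq}(U\cap W).
\]
The term $\dim_{\fq} W$ cancels, leaving exactly $\ww_{L_U}(\pg(W',\fqn)) - \ww_{L_U}(\Omega)$. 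I expect this bookkeeping---keeping the $\fqn$-structure used for the projective/quotient correspondence separate from the $\fq$-dimensions used for weights---to be the only genuinely delicate point, and the modular law is precisely what makes $\dim_{\fq} W$ cancel cleanly.

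The remaining three assertions then follow as corollaries. Taking $W'=V$ gives the weight of the whole space $\pg(V/W,\fqn)$, i.e.\ the rank of $L_{\overline U}$, equal to $k - \ww_{L_U}(\Omega)$. For the count, I would use that $W' \mapsto W'/W$ is a bijection between the $r$-spaces through $\Omega$ and the points of $\pg(V/W,\fqn)$; such a point $\pg(W'/W,\fqn)$ lies in $L_{\overline U}$ exactly when its weight is positive, which by the identity just proved means $\dim_{\fq}(U\cap W') > \dim_{\fq}(U\cap W)$, i.e.\ $(U\cap W')\setminus W \ne \emptyset$, i.e.\ the $r$-space $\pg(W',\fqn)$ contains a point of $L_U\setminus\Omega$. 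Hence $|L_{\overline U}|$ counts precisely those $r$-spaces. Finally, if $L_U$ spans, then $\langle U\rangle_{\fqn}=V$, so $\langle \overline U\rangle_{\fqn} = (\langle U\rangle_{\fqn}+W)/W = V/W$, and $L_{\overline U}$ spans $\pg(V/W,\fqn)$.
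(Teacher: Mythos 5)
Your proof is correct and follows essentially the same route as the paper: both reduce the weight in the quotient to $\dim_{\fq}\bigl((U+W)\cap W'\bigr)-\dim_{\fq}W$ and then identify $(U+W)\cap W'$ with $(U\cap W')+W$, the paper doing this by explicitly choosing complements $U_1\oplus U_2 = U\cap W'$ with $U_1=U\cap W$ rather than by invoking the modular law by name. The derivations of the rank, the count of $r$-spaces, and the spanning statement also match the paper's (the last of which the paper leaves implicit).
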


\begin{proof}
 We can find $\fq$-subspaces $U_1, U_2, U_3$ of $U$ such that
 \begin{itemize}
  \item $U_1 = W \cap U$,
  \item $U_1 \oplus U_2 = W' \cap U$,
  \item $U_1 \oplus U_2 \oplus U_3 = U$.
 \end{itemize}
Then
 \begin{align*}
 \ww_{L_{\overline U}}(\pg((W'+W)/W,\fqn))
 & = \dim_{\F_q} (\overline U \cap ((W' + W)/W))
 = \dim_{\F_q} (\langle U,W \rangle_{\F_q} \cap W') - \dim_{\F_q} W \\
 & = \dim_{\F_q} (W \oplus U_2) - \dim_{\F_q}( W)
 = \dim_{\F_q} (U_2) \\
 & = \dim_{\F_q} (U_1 \oplus U_2) - \dim_{\F_q} (U_1)
 = \ww_{L_U}(\pg(W',\fqn)) - \ww_{L_U}(\Omega).
 \end{align*}
If we put $W' = V$, we see that $L_{\overline U}$ has rank $k- \ww_{L_U}(\Omega)$.
It also follows that the points of $L_{\overline U}$ are in 1-1 correspondence with the $(r+1)$-spaces $W'$ of $V$ with $\ww_{L_U}(\pg(W,\fqn)) > \ww_{L_U}(\Omega)$, which are exactly the $r$-spaces through $\Omega$ in $\pg(V,\fqn)$ containing a point of $L_U \setminus \Omega$.
\end{proof}

This tells us the following about the quantity $I_\Omega$ in \Cref{thm:OurBound}.

\begin{proposition} \label{prop:boundquotient}
In the hypothesis of Theorem \ref{thm:OurBound}, let $L_{\overline U}$ be the projection of $L_U$ from $\Omega$. Then
\begin{equation*}
 \lvert L_U \rvert \geq q^{k-1}+q^{k-2}+\ldots+q^{k-r}+\lvert  L_{\overline{U}} \rvert.
\end{equation*}
Moreover, $L_{\overline U}$ has rank $k-r$.
\end{proposition}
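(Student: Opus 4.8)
The plan is to recognize this proposition as essentially a restatement of \Cref{thm:OurBound} in the language of projections developed in \Cref{lem:projectionlinear}; the crux is to identify the combinatorial quantity $I_\Omega$ from \Cref{thm:OurBound} with the size $\lvert L_{\overline U} \rvert$ of the projected linear set.

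First I would compute $\ww_{L_U}(\Omega)$. Writing $\Omega = \pg(W,\fqn)$ with $\dim_{\fqn} W = r$, the hypothesis of \Cref{thm:OurBound} states that $L_U$ meets $\Omega$ in a canonical $\fq$-subgeometry of the $(r-1)$-space $\Omega$. Such a subgeometry has rank $r$, so that $\ww_{L_U}(\Omega) = \dim_{\fq}(U \cap W) = r$.

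Next I would invoke \Cref{lem:projectionlinear} directly. It yields that $L_{\overline U}$ has rank $k - \ww_{L_U}(\Omega) = k - r$, which establishes the ``moreover'' part. The same lemma tells us that $\lvert L_{\overline U} \rvert$ equals the number of $r$-spaces through $\Omega$ that contain a point of $L_U \setminus \Omega$ --- but this is precisely the definition of $I_\Omega$ in \Cref{thm:OurBound}. Hence $\lvert L_{\overline U} \rvert = I_\Omega$.

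Finally, substituting this equality into the bound $\lvert L_U \rvert \geq q^{k-1} + \ldots + q^{k-r} + I_\Omega$ furnished by \Cref{thm:OurBound} gives the claimed inequality. There is no serious obstacle here: the statement is a direct corollary of the two preceding results. The only points requiring attention are the correct evaluation $\ww_{L_U}(\Omega) = r$ coming from the canonical-subgeometry hypothesis, and the observation that the geometric description of $\lvert L_{\overline U} \rvert$ in \Cref{lem:projectionlinear} matches the definition of $I_\Omega$ verbatim.
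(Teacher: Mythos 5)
Your proof is correct and follows exactly the route the paper intends: the paper states this proposition immediately after \Cref{lem:projectionlinear} with no separate proof, precisely because the identification $\lvert L_{\overline U}\rvert = I_\Omega$ and the rank computation $k - \ww_{L_U}(\Omega) = k-r$ follow from that lemma and the canonical-subgeometry hypothesis, after which one substitutes into \Cref{thm:OurBound}. Nothing to add.
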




In the next sections, we will investigate linear sets attaining equality in the bound of \Cref{thm:OurBound}.
To this end, we introduce some relevant terminology.

\begin{definition}
Let $L_U$ be an $\fq$-linear set of rank $k$ in $\PG(d,q^n)$.
If
\[
 |L_U| = q^{k-1} + \ldots + q^{k-d} + 1,
\]
we say that $L_U$ is of \emph{$d$-minimum size}.
If there is some $(r-1)$-space $\Omega$ such that $L_U$ and $\Omega$ satisfy the hypothesis of \Cref{thm:OurBound}, and
\begin{equation*}
 |L_U| = q^{k-1} + \ldots + q^{k-r} + I_\Omega
 \leq q^{k-1} + \ldots + q^{k-d} + 1,
\end{equation*}
then we say that $L_U$ is of \emph{$(r,d,\Omega)$-minimum size}, or simply of \emph{$(r,d)$-minimum size}.
A linear set of $(d,d)$-minimum size, will also be called of \emph{proper $d$-minimum size}.
\end{definition}

\begin{remark} \label{rk:geometricfieldminimum}
By Remark \ref{rk:secantgeometricfield}, an $(r,d)$-minimum size linear set has maximum geometric field of linearity  $\F_q$ whenever $r \geq 2$.
\end{remark}

In the next proposition, we also prove that if a linear set is of $(r,d)$-minimum size, it is of $(r',d)$-minimum size for every $r' \leq r$.

\begin{proposition}
Let $L_U$ be an $(r,d)$-minimum size $\F_q$-linear set of rank $k$ in $\PG(d,q^n)$.
Then $L_U$ is of $(r',d)$-minimum size as well, for every $0 < r' \leq r$.    
\end{proposition}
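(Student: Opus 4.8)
The case $r' = r$ is trivial, so I would fix $r' < r$. The strategy is to carve out an $(r'-1)$-space $\Omega'$ \emph{inside} the $(r-1)$-space $\Omega$ witnessing the $(r,d)$-minimum size property, and then to transfer the equality in \Cref{thm:OurBound} from $\Omega$ down to $\Omega'$ by a projection argument. Concretely, write $\Omega = \pg(W,\fqn)$, so that $T := U \cap W$ satisfies that $L_T$ is a canonical subgeometry of $\Omega$; in particular $\dim_\fq T = r$ and an $\fq$-basis $t_1,\dots,t_r$ of $T$ is an $\fqn$-basis of $W$. I would then set $T' = \langle t_1,\dots,t_{r'}\rangle_\fq$, $W' = \langle t_1,\dots,t_{r'}\rangle_{\fqn}$ and $\Omega' = \pg(W',\fqn)$. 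Using $\fqn$-independence of the $t_i$, one checks $U \cap W' = T \cap W' = T'$, so that $L_U$ meets $\Omega'$ in the canonical $\fq$-subgeometry $L_{T'}$ of $\Omega'$. Since $r' < r < k$, the pair $(L_U, \Omega')$ satisfies the hypotheses of \Cref{thm:OurBound}.

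The heart of the argument is a telescoping of two applications of \Cref{prop:boundquotient}. Let $M$ be the projection of $L_U$ from $\Omega'$; by \Cref{lem:projectionlinear} it is an $\fq$-linear set of rank $k-r'$, with $|M| = I_{\Omega'}$. Let $\overline\Omega$ denote the image of $\Omega$ in the quotient, an $(r-r'-1)$-space; by modularity $\overline U{}' \cap (W/W') = (T+W')/W' =: \overline T$, so $M$ meets $\overline\Omega$ in the canonical subgeometry $L_{\overline T}$. Crucially, projecting $M$ further from $\overline\Omega$ recovers the projection $L_{\overline U}$ of $L_U$ from $\Omega$ (projections compose), whence $I_{\overline\Omega} = |L_{\overline U}| = I_\Omega$. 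Applying \Cref{prop:boundquotient} to $(L_U,\Omega')$ and then to $(M,\overline\Omega)$ gives
\[
 |L_U| \geq q^{k-1} + \ldots + q^{k-r'} + |M|, \qquad |M| \geq q^{k-r'-1} + \ldots + q^{k-r} + I_\Omega.
\]
Substituting the second estimate into the first yields $|L_U| \geq q^{k-1} + \ldots + q^{k-r} + I_\Omega$, and the right-hand side equals $|L_U|$ by the $(r,d,\Omega)$-minimum size hypothesis. Hence every inequality above is an equality; in particular $|L_U| = q^{k-1} + \ldots + q^{k-r'} + I_{\Omega'}$. The bound $|L_U| \leq q^{k-1} + \ldots + q^{k-d} + 1$ is inherited unchanged from the $(r,d)$-minimum size assumption, so $L_U$ is of $(r',d,\Omega')$-minimum size.

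The step that will need the most care is the compatibility of the two projections: one must verify cleanly (via \Cref{lem:projectionlinear} and modularity) that $M$ meets $\overline\Omega$ in a \emph{canonical} subgeometry, and that the composite-projection identity forces $I_{\overline\Omega} = I_\Omega$, so that the two telescoped bounds close up \emph{exactly} to $|L_U|$ and equality is propagated. Everything else is routine dimension bookkeeping together with the already-established \Cref{thm:OurBound} and \Cref{prop:boundquotient}.
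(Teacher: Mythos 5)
Your proof is correct and follows essentially the same route as the paper: both arguments sandwich $|L_U|$ between two applications of \Cref{prop:boundquotient} (once from the smaller subspace $\Omega'$, once from the image of $\Omega$ in the quotient), use that the composite of the two projections is the projection from $\Omega$, and then let the $(r,d,\Omega)$-minimum size hypothesis force equality throughout. The only cosmetic difference is that the paper reduces to the case $r'=r-1$ and descends one step at a time, whereas you handle a general $r'<r$ in a single telescoping step.
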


\begin{proof}
 It is enough to prove the statement for $r'=r-1$.
 By hypothesis, we know that there is some $(r-1)$-space $\Omega=\PG(W,\F_{q^n})$ of $\PG(d,q^n)$ meeting $L_U$ in a canonical subgeometry, such that 
\begin{equation} \label{eq:inequality1}
 \lvert L_U \rvert=  q^{k-1}+q^{k-2}+\ldots+q^{k-r}+\lvert  L_{\overline{U}} \rvert,
\end{equation}
 where $ L_{\overline{U}} $ is the $\F_q$-linear set in $\PG(V/W,\F_{q^n})=\PG(d-r,q^n)$ defined by $\overline{U}=U+W \subseteq V/W$. 
 Let $ \Omega'= \PG(W',\F_{q^n}) $ be an $(r-2)$-space of $\Omega$ that meets $L_U$ in a canonical subgeometry.
 So, by Proposition \ref{prop:boundquotient}, we have that 
\begin{equation} \label{eq:inequality2}
 \lvert L_U \rvert \geq   q^{k-1}+q^{k-2}+\ldots+q^{k-r+1}+\lvert  L_{\overline{U'}} \rvert,
\end{equation}
 where $ L_{\overline{U'}} $ is the $\F_q$-linear set of rank $k-r+1$ in $\PG(V/W',\F_{q^n})=\PG(d-r+1,q^n)$ defined by $\overline{U'}=U+W' \subseteq V/W'$. Therefore, by \eqref{eq:inequality1}, it follows $q^{k-r}+\lvert  L_{\overline{U}} \rvert \geq \lvert  L_{\overline{U'}} \rvert$. 
 On the other hand, since $w_{L_U}(\Omega)=r$, we get $ w_{L_{\overline{U'}}}(\PG(W/W',\F_{q^n}))=1$ and so $L_{\overline{U'}}$ has a point of weight 1. 
 Now, by \Cref{prop:boundquotient}, we get that 
 \[
\lvert L_{\overline{U'}} \rvert \geq q^{k-r}+ \left\lvert L_{\overline{\overline{U'}}} \right\rvert
 \]
 with $\overline{{\overline {U'}}} = U/W' + W/W' \leq (V/W')/W$, which is equal to $\overline U = U + W \leq V/W$.
 Hence,
 \[
  |L_{\overline {U'}}| \geq q^{k-r}+ \lvert L_{\overline{U}} \rvert.
 \]
 Then, by \eqref{eq:inequality1}, equality holds in \eqref{eq:inequality2} and so $L_U$ is of $(r-1,d,\Omega')$-minimum size.
\end{proof}

We conclude this subsection by giving a sufficient condition to apply \Cref{res:JanGeertrui}.

\begin{theorem}
 \label{th:Rank=k+d-r}
Let $k,d$ and $r$ be non negative integers with $r<k,d$. Let $L_U$ be an $\F_q$-linear set in $\PG(d,q^n)$ of rank $k+d-r$ spanning $\PG(d,q^n)$.
Suppose that there is an $r$-space $\Omega$ of $\PG(d,q^n)$ such that $\ww_{L_U}(\Omega)=k$, and $\Omega$ contains an $(r-1)$-space $\Omega'$ that meets $L_U$ in a canonical $\fq$-subgeometry.
Then some hyperplane $\Pi$ of $\PG(d,q^n)$ meets $L_U$ in a canonical $\fq$-subgeometry, implying $\lvert L_U \rvert \geq q^{k+d-r-1}+q^{k+d-r-2}+\ldots+q^{k-r}+1$.
\end{theorem}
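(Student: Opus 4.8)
The plan is to produce a single hyperplane $\Pi$ of $\PG(d,q^n)$ that meets $L_U$ in a canonical $\fq$-subgeometry; once this is done the stated bound is immediate from \Cref{res:JanGeertrui}, since $L_U$ has rank $k+d-r$, which exceeds $d$ because $r<k$, and the corresponding bound $q^{(k+d-r)-1}+\ldots+q^{(k+d-r)-d}+1$ is exactly $q^{k+d-r-1}+\ldots+q^{k-r}+1$. So the entire content is the \emph{existence} of such a hyperplane. The key idea I would use is to project from the subgeometry $\Omega'$: this collapses the heavy $r$-space $\Omega$ to a single point whose large weight is precisely what makes the weight bookkeeping close up.

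Write $\Omega=\PG(W,\fqn)$ and $\Omega'=\PG(W',\fqn)$, so $W'\subseteq W$, $\dim_{\fqn}W=r+1$, $\dim_{\fqn}W'=r$, $\dim_{\fq}(U\cap W)=k$, and $U\cap W'$ is a canonical subgeometry of $\Omega'$, i.e.\ $\dim_{\fq}(U\cap W')=r=\dim_{\fqn}\langle U\cap W'\rangle_{\fqn}$. I would then pass to the projection $L_{\overline U}$ of $L_U$ from $\Omega'$ in $\PG(V/W',\fqn)=\PG(d-r,q^n)$. By \Cref{lem:projectionlinear}, $L_{\overline U}$ has rank $(k+d-r)-r=k+d-2r$ and spans $\PG(d-r,q^n)$; moreover the image $P_0=\PG(W/W',\fqn)$ of $\Omega$ is a single point of $L_{\overline U}$ of weight $\ww_{L_U}(\Omega)-\ww_{L_U}(\Omega')=k-r$. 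The plan is now to find a hyperplane $\overline H$ of $\PG(d-r,q^n)$ meeting $L_{\overline U}$ in a canonical subgeometry, and then lift it back through $\Omega'$.

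To build $\overline H$, I would choose points $Q_1,\dots,Q_{d-r}\in L_{\overline U}$ so that $P_0,Q_1,\dots,Q_{d-r}$ are linearly independent (hence span $\PG(d-r,q^n)$); this is possible greedily because $L_{\overline U}$ spans the space, and it uses $d-r\geq 1$, i.e.\ $r<d$. Writing $Q_i=\PG(\langle q_i\rangle,\fqn)$ and $\overline H=\langle Q_1,\dots,Q_{d-r}\rangle=\PG(\langle q_1,\dots,q_{d-r}\rangle,\fqn)$, the point $P_0$ and the hyperplane $\overline H$ are complementary subspaces, so \Cref{lm:IndependentSubspaces} gives $\ww_{L_{\overline U}}(\overline H)\leq (k+d-2r)-(k-r)=d-r$. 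On the other hand $\bigoplus_{i}(\overline U\cap\langle q_i\rangle)\subseteq \overline U\cap\langle q_1,\dots,q_{d-r}\rangle$, and since each $Q_i\in L_{\overline U}$ this direct sum has $\fq$-dimension $\sum_i\ww_{L_{\overline U}}(Q_i)\geq d-r$. Comparing the two estimates forces equality throughout, so $\overline U\cap\langle q_1,\dots,q_{d-r}\rangle=\bigoplus_i(\overline U\cap\langle q_i\rangle)$ has $\fq$-dimension $d-r$ and $\fqn$-span $\langle q_1,\dots,q_{d-r}\rangle$; by definition $L_{\overline U}\cap\overline H$ is a canonical subgeometry.

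The main obstacle, and the step I would treat most carefully, is the lift-back. Let $H$ be the preimage of $\langle q_1,\dots,q_{d-r}\rangle$ in $V$, so $W'\subseteq H$, $\dim_{\fqn}H=(d-r)+r=d$, and $\Pi=\PG(H,\fqn)$ is a hyperplane. By \Cref{lem:projectionlinear}, $\ww_{L_U}(\Pi)=\ww_{L_{\overline U}}(\overline H)+\ww_{L_U}(\Omega')=(d-r)+r=d$, so $\dim_{\fq}(U\cap H)=d$. To see that $L_U\cap\Pi$ is actually a \emph{canonical} subgeometry and not merely a linear set of rank $d$, I would invoke the modular law: since $W'\subseteq H$, one has $(U+W')\cap H=(U\cap H)+W'$, whence the preimage of $\overline U\cap(H/W')$ is $(U\cap H)+W'$. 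As $L_{\overline U}\cap\overline H$ spans $\overline H$, taking $\fqn$-spans gives $\langle U\cap H\rangle_{\fqn}+W'=H$, and because $W'=\langle U\cap W'\rangle_{\fqn}\subseteq\langle U\cap H\rangle_{\fqn}$ this yields $\langle U\cap H\rangle_{\fqn}=H$. Thus $\dim_{\fq}(U\cap H)=d=\dim_{\fqn}\langle U\cap H\rangle_{\fqn}$, so $L_U\cap\Pi$ is a canonical $\fq$-subgeometry of $\Pi$, and \Cref{res:JanGeertrui} completes the proof.
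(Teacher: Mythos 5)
Your proposal is correct and follows essentially the same route as the paper: project from $\Omega'$, use the fact that the image of $\Omega$ is a point of weight $k-r$ in a projection of rank $k+d-2r$ together with \Cref{lm:IndependentSubspaces} to force $d-r$ further independent points of weight $1$, conclude that the hyperplane they span meets the projection in a canonical subgeometry, and lift back through $\Omega'$. Your modular-law justification of the lift-back step is a welcome elaboration of what the paper dismisses with ``it follows that,'' but it is not a different argument.
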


\begin{proof}
Suppose that $\PG(d,q^n) = \PG(V,\fqn)$, $\Omega = \PG(W,\fqn)$, and $\Omega' = \PG(W',\fqn)$.
Consider the projection of $L_U$ from $\Omega'$, which equals the linear set $L_{\overline U}$, with $\overline U = U + W' \subseteq V / W'$.
Write $P_0 = W/W'$, and choose any point $P_1 \in L_{\overline U} \setminus \{ P_0 \}$.
Since $L_U$ spans $\PG(d,q^n)$, we can extend $P_0,P_1$ to a subset $P_0, P_1, \dots, P_{d-r}$ of $L_{\overline U}$ that spans $\PG(V/W',\fqn)$.
Also, $\ww_{L_{\overline U}}(P_0) = k-r$, and the rank of $L_{\overline U}$ equals $k + d - 2r = (k-r) + (d-r)$.
Hence, by \Cref{lm:IndependentSubspaces},
\[
 (k-r) + (d-r) \geq \sum_{i=0}^{d-r} \ww_{L_{\overline U}}(P_i)
 = (k-r) + \sum_{i=1}^{d-r} \ww_{L_{\overline U}}(P_i),
\]
which implies that $\ww_{L_{\overline U}}(P_i) = 1$ for all $i \geq 1$, and $P_0, \ldots, P_{d-r}$ are points of complementary weights.
Therefore, $L_{\overline U}$ meets $\langle P_1, \ldots, P_{d-r} \rangle$ in a canonical subgeometry.
There is a unique $\fqn$-space $W''$ through $W'$ such that $\langle P_1, \ldots, P_{d-r} \rangle = \pg(W'' + W', \fqn)$.
It follows that $\pg(W'',\fqn)$ meets $L_U$ in a canonical subgeometry.
\end{proof}

\subsection{Proof of \texorpdfstring{\Cref{th:prime}}{Theorem 1.6}}

Our next goal it to prove \Cref{th:prime}.
Let us start by recalling two well-known results concerning linear sets.

\begin{lemma}
 \label{Lm:PointOfWeight1}
 Suppose that $L_U$ is an $\fq$-linear set of rank $k$ of $\pg(n,q)$.
 Let $m = \min \{ \ww_{L_U}(P) \, : \, P \in L_U \}$ denote the minimum weight of the points of $L_U$.
 Then there exists an $\fq$-linear set $L_{U'}$ of rank $k-m+1$ in $\pg(n,q)$ containing points of weight 1 such that $L_U$ and $L_{U'}$ coincide as point sets.
\end{lemma}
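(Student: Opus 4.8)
The plan is to pick a point realizing the minimum weight and then replace $U$ by a well-chosen subspace $U'$ of exactly the right codimension, so that a single dimension count simultaneously shows that the point set is preserved and that a point of weight $1$ appears. The construction is one-shot; no induction is needed.

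First I would fix a point $P = \langle v \rangle_{\fqn} \in L_U$ with $\ww_{L_U}(P) = m$, and set $S := U \cap \langle v \rangle_{\fqn}$, an $\fq$-subspace of dimension $m$. I would then build $U'$ explicitly: choose a nonzero $s_0 \in S$, extend $s_0$ to an $\fq$-basis of $S$ and that in turn to an $\fq$-basis of $U$, and let $U'$ be the span of $s_0$ together with the chosen basis vectors of $U$ lying outside $S$. This yields an $\fq$-subspace with $\dim_\fq U' = k - m + 1$ (codimension $m-1$ in $U$) and, since the vectors outside $S$ are independent from $S$, with $U' \cap S = \langle s_0 \rangle_\fq$ of dimension exactly $1$.

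The heart of the argument is a single application of Grassmann's identity. For an arbitrary point $Q = \langle u \rangle_{\fqn} \in L_U$ of weight $w = \dim_\fq(U \cap \langle u \rangle_{\fqn})$, the inclusion $U' \subseteq U$ gives $U' \cap \langle u \rangle_{\fqn} = U' \cap (U \cap \langle u \rangle_{\fqn})$, and since $U'$ has codimension $m-1$ in $U$, $\dim_\fq(U' \cap \langle u \rangle_{\fqn}) \geq \dim_\fq U' + w - \dim_\fq U = w - (m-1)$. Because $m$ is the minimum weight we have $w \geq m$, so this dimension is at least $1$ and hence $Q \in L_{U'}$. This proves $L_U \subseteq L_{U'}$, while $L_{U'} \subseteq L_U$ is immediate from $U' \subseteq U$; thus $L_{U'} = L_U$ as point sets, and $L_{U'}$ has rank $k - m + 1$. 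Finally $\ww_{L_{U'}}(P) = \dim_\fq(U' \cap \langle v \rangle_{\fqn}) = \dim_\fq(U' \cap S) = 1$, so $L_{U'}$ does contain a point of weight $1$.

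The only place requiring genuine insight—really the crux—is the choice of codimension of $U'$: it must be exactly $m-1$, so that the transversality bound $w - (m-1) \geq 1$ still forces every point of $L_U$ to survive in $L_{U'}$, yet it must be large enough to collapse $P$ to weight $1$. Imposing $U' + S = U$ is precisely what reconciles these demands, making $\dim_\fq(U' \cap S) = 1$ while keeping the codimension at $m-1$; a cruder reduction that simply deletes all of $S$ except one line would in general destroy the other points and fail to preserve $L_U$. Everything beyond this observation is routine linear algebra. (If one prefers, the same statement also follows by inducting on the rank, removing one hyperplane at a time and checking that the minimum weight drops by exactly one at each step, but the direct construction above is the shortest route.)
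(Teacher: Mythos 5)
Your proof is correct and follows essentially the same route as the paper: choose a subspace $U'$ of $U$ of dimension $k-m+1$ meeting $\langle v\rangle_{\fqn}$ in a line (you merely make the existence of such a $U'$ explicit via a basis extension, where the paper simply asserts it), and then use Grassmann's identity inside $U$ to show every point of weight $w\geq m$ survives with weight at least $w-(m-1)\geq 1$. No substantive difference.
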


\begin{proof}
 Take a vector $u \in U$ such that $P = \langle u \rangle_{\fqn}$ has weight $m$ in $L_U$.
 Then there exists a $(k-m+1)$-dimensional $\fq$-subspace $U'$ of $U$ that intersects $\langle u \rangle_{\F_{q^n}}$ in a 1-dimensional subspace.
 Then $\ww_{L_{U'}}(P) = 1$.
 It remains to show that $L_U$ and $L_{U'}$ coincide as points sets.
 The inclusion $L_{U'} \subseteq L_U$ is evident.
 On the other hand, take a non-zero $v \in U$.
 Then, by Grassmann's identity
 \begin{align*}
  \ww_{L_{U'}}(\langle v \rangle_{\fqn})
  &= \dim_{\fq} ( \langle v \rangle_{\fqn} \cap U')
  = \dim_{\fq} ((\langle v \rangle_{\fqn} \cap U) \cap U') \\
  & = \dim_{\fq} (\langle v \rangle_{\fqn} \cap U) + \dim_{\fq} (U') - \dim_{\F_q} (\langle \langle v \rangle_{\F_{q^n}} \cap U, U' \rangle_{\F_q}) \\
  & \geq m + (k-m+1) - \dim_{\fq} (U) = 1.
 \end{align*}
This shows that $L_U \subseteq L_{U'}$.
Thus, $L_U$ and $L_{U'}$ coincide as point sets.
\end{proof}

\begin{lemma} \label{Lm:AllPoints}
 Let $L_U$ be an $\fq$-linear set in $\pg(d,q^n)$ of rank $k$.
 Then $L_U$ contains all points of $\pg(d,q^n)$ if and only if $k > dn$.
 In that case, $L_U$ contains a point of weight 1 if and only if $k = dn+1$.
\end{lemma}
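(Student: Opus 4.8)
The plan is to combine one Grassmann dimension count (which bounds the weight of an individual point from below) with the cardinality relations \eqref{eq:card}, \eqref{eq:pesicard} and \eqref{eq:pesivett}. Recall that $V$ has $\fq$-dimension $(d+1)n$, that $U$ has $\fq$-dimension $k$, and that each point $P=\langle v\rangle_{\fqn}$ corresponds to an $n$-dimensional $\fq$-subspace $\langle v\rangle_{\fqn}$ of $V$, with $\ww_{L_U}(P)=\dim_{\fq}(U\cap\langle v\rangle_{\fqn})$. By Grassmann's identity, for every point $P$,
\[
 \ww_{L_U}(P)\geq \dim_{\fq}U+\dim_{\fq}\langle v\rangle_{\fqn}-\dim_{\fq}V = k+n-(d+1)n = k-dn,
\]
and this single estimate will be reused throughout.

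First I would prove the equivalence in the first statement. If $k>dn$, i.e.\ $k\geq dn+1$, the estimate above gives $\ww_{L_U}(P)\geq 1$ for every point $P$, so $L_U$ contains all points. For the converse, suppose $L_U$ contains all $\frac{q^{n(d+1)}-1}{q^n-1}$ points of $\pg(d,q^n)$. Then \eqref{eq:card} yields $\frac{q^{n(d+1)}-1}{q^n-1}=|L_U|\leq\frac{q^k-1}{q-1}$, and a short estimate rules out $k\leq dn$: since $\frac{q^{dn}-1}{q-1}\leq q^{dn}-1<q^{dn}\leq\frac{q^{n(d+1)}-1}{q^n-1}$ for every $q\geq 2$, the assumption $k\leq dn$ would contradict the previous inequality. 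Hence $k>dn$.

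For the second statement, assume $k>dn$. If $k\geq dn+2$, the Grassmann estimate gives $\ww_{L_U}(P)\geq k-dn\geq 2$ for every point $P$, so $L_U$ has no point of weight $1$. It remains to treat $k=dn+1$, which I expect to be the main obstacle, in the sense that it is the only step where an inequality could a priori point the wrong way and so must be checked with care. I would argue by contradiction, assuming every point has weight at least $2$, i.e.\ $N_1=0$. Since $\frac{q^i-1}{q-1}\geq q+1$ for all $i\geq 2$, combining \eqref{eq:pesicard} and \eqref{eq:pesivett} gives
\[
 \frac{q^{dn+1}-1}{q-1} = \sum_{i=2}^{n} N_i\,\frac{q^i-1}{q-1} \geq (q+1)\sum_{i=2}^{n} N_i = (q+1)\,|L_U| = (q+1)\,\frac{q^{n(d+1)}-1}{q^n-1}.
\]
A direct estimate shows this is false for all $q\geq 2$: the left-hand side is smaller than $\frac{q}{q-1}\,q^{dn}$, the right-hand side exceeds $(q+1)\,q^{dn}$, and $\frac{q}{q-1}\leq q+1$ precisely because $q^2-q-1\geq 0$ for $q\geq 2$. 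This contradiction forces $N_1>0$, which completes the proof.
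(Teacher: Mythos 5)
Your proof is correct. Most of it coincides with the paper's argument: the Grassmann estimate $\ww_{L_U}(P)\geq k-dn$ handles both the implication $k>dn\Rightarrow L_U=\pg(d,q^n)$ and the absence of weight-one points when $k\geq dn+2$, and the cardinality bound \eqref{eq:card} rules out $k\leq dn$ exactly as the paper does. The one place where you genuinely diverge is the case $k=dn+1$. The paper invokes \Cref{Lm:PointOfWeight1}: if every point had weight at least $m\geq 2$, then $L_U$ would coincide as a point set with a linear set of rank $k-m+1\leq dn$, which by the first half of the lemma cannot cover the whole space --- a contradiction. You instead run a double count on the weight distribution, combining \eqref{eq:pesicard} and \eqref{eq:pesivett} with $N_1=0$ to derive $\frac{q^{dn+1}-1}{q-1}\geq(q+1)\frac{q^{n(d+1)}-1}{q^n-1}$ and then refuting this numerically. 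Both routes work; the paper's is shorter because \Cref{Lm:PointOfWeight1} is already available and it recycles the first statement of the lemma, while yours is self-contained modulo the standard weight identities. One cosmetic point: your claim that the right-hand side strictly exceeds $(q+1)q^{dn}$ requires $d\geq 1$ (for $d=0$ it is an equality), but the final strict inequality $\frac{q^{dn+1}-1}{q-1}<(q+1)\frac{q^{n(d+1)}-1}{q^n-1}$ still holds in that degenerate case, so nothing breaks.
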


\begin{proof}
 If $k \leq dn$, then $|L_U| \leq \frac{q^{dn}-1}{q-1}$ by (\ref{eq:card}), so it can't contain all points of $\pg(d,q^n)$.
 If $k > dn$, then $U$ intersects every $\fq$-subspace of $\fqn^{d+1}$ of dimension $n$ non-trivially by Grassmann's identity.
 In particular, $U$ intersects all one-dimensional $\fqn$-subspaces of $\fqn^{d+1}$ non-trivially.
 Therefore, $L_U$ contains all points of $\pg(d,q^n)$.
 
 If $k = dn+1$, then $L_U$ contains points of weight 1 by \Cref{Lm:PointOfWeight1}, since we have proven that $L_U$ can't coincide with a linear set of lower rank.
 If $k > dn+1$, then we can use Grassmann's identity again to prove that $L_U$ has no points of weight 1.
\end{proof}

Combining this with \Cref{th:pointweightgreaterfield}, we obtain the following lemma.

\begin{lemma}
 \label{lm:NPrimeNoPtsOfWt1}
 Suppose that $n$ is a prime number with $n \leq q$.
 Let $L_U$ be an $\fq$-linear set of rank $k$ spanning $\pg(d,q^n)$.
 Then $L_U$ does not contain any points of weight 1 if and only if $dn+2 \leq k \leq dn + n$.
\end{lemma}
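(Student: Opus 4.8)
The plan is to prove the two implications separately, leaning on \Cref{Lm:AllPoints} and \Cref{th:pointweightgreaterfield}. First observe that since $U$ is an $\fq$-subspace of the $(d+1)n$-dimensional $\fq$-space $\fqn^{d+1}$, the rank always satisfies $k \leq (d+1)n = dn+n$; hence the upper bound $k \leq dn+n$ is automatic and requires no argument in either direction.

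For the implication ``$dn+2 \leq k \Rightarrow L_U$ has no point of weight $1$'', I would simply invoke \Cref{Lm:AllPoints}. Since $k \geq dn+2 > dn$, that lemma tells us that $L_U$ consists of all points of $\pg(d,q^n)$, and that it contains a point of weight $1$ precisely when $k = dn+1$. As $k \neq dn+1$, there is no such point. Note that this direction uses neither the primality of $n$ nor the hypothesis $n \leq q$.

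For the converse, assume $L_U$ has no point of weight $1$; I must show $k \geq dn+2$, equivalently rule out $k \leq dn+1$. I would split into two cases. If $k = dn+1$, then \Cref{Lm:AllPoints} immediately produces a point of weight $1$, a contradiction. The substantive case is $k \leq dn$. Here every point of $L_U$ has weight at least $2$, so \Cref{th:pointweightgreaterfield} applies with $w = 2$, yielding an integer $t$ with $2 \leq t \mid n$ and $L_U = L_{U'}$, where $U' = \langle U \rangle_{\F_{q^t}}$.

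The crux is then to exploit that $n$ is prime: the only divisor $t$ of $n$ with $t \geq 2$ is $t = n$, so necessarily $U' = \langle U \rangle_{\fqn}$. Since $L_U$ spans $\pg(d,q^n)$, we have $\langle U \rangle_{\fqn} = \fqn^{d+1}$, whence $L_U = L_{U'} = \pg(d,q^n)$ consists of all points. But by \Cref{Lm:AllPoints} this forces $k > dn$, contradicting $k \leq dn$. This closes the case and completes the argument. I expect this final step --- combining the conclusion of \Cref{th:pointweightgreaterfield} with the primality of $n$ to collapse $t$ to $n$ --- to be the only delicate point; everything else is bookkeeping with \Cref{Lm:AllPoints}.
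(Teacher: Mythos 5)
Your proposal is correct and follows essentially the same route as the paper: the case $k>dn$ is dispatched by \Cref{Lm:AllPoints}, and the case $k\leq dn$ with no weight-1 points is handled by applying \Cref{th:pointweightgreaterfield}, using the primality of $n$ to force $t=n$ so that $L_U$ becomes a subspace, which must be all of $\pg(d,q^n)$ and contradicts $k\leq dn$ via \Cref{Lm:AllPoints}. The only cosmetic difference is that you spell out the automatic upper bound $k\leq(d+1)n$ and the $k=dn+1$ subcase explicitly, which the paper compresses into its opening sentence.
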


\begin{proof}
 By \Cref{Lm:AllPoints}, it suffices to prove that $L_U$ contains a point of weight 1 if $k \leq dn$.
 Suppose the contrary, i.e.\ $k \leq dn$ and $L_U$ contains no points of weight 1.
 Then by \Cref{th:pointweightgreaterfield} and $n$ being prime, $L_U$ as point set must be a subspace of $\pg(d,q^n)$.
 Since $L_U$ spans the entire space, this means that $L_U$ contains all points of $\pg(d,q^n)$.
 Using \Cref{Lm:AllPoints}, we see that this contradicts $k \leq dn$.
\end{proof}

Before proving the lower bound of \Cref{th:prime}, let us provide examples attaining equality in the bound.

\begin{construction} \label{Constr:Prime}
 Choose integers $0 \leq r \leq d$.
 Let $U_1$ be a $k_1$-dimensional $\fq$-subspace of $\fqn^{d-r+1}$ with $(d-r)n + 2 \leq k_1 \leq (d-r+1)n$.
 Define $U = U_1 \times \fq^r \leq_{\fq} \fqn^{d+1}$.
 Then $L_U$ is an $\fq$-linear set spanning $\pg(d,q^n)$ of rank $k = k_1 + r$ of size
 \[
  q^{k-1} + \ldots + q^{k-r} + \frac{q^{(d-r+1)n}-1}{q^n-1}.
 \]
 Moreover, $r = d-\floor{\frac{k-(d+2)}{n-1}}$.
\end{construction}

\begin{proof}
 It follows immediately that $L_U$ is an $\fq$-linear set of rank $k$ spanning $\pg(d,q^n)$.
 Moreover, since $k = k_1 + r$, one can check that
 \[
  (d-r)n + 2 \leq k_1 \leq (d-r+1)n \implies d-r \leq \frac{k-(d+2)}{n-1} \leq d-r+1 - \frac{1}{n-1},
 \]
 which yields $r = d-\floor{\frac{k-(d+2)}{n-1}}$.

 It remains to determine the size of $L_U$.
 Let $u = (u_1,u_2) \in U_1 \times \fq^r$ be a non-zero vector of $U$.
 If $u_2 \neq \mathbf 0$, then clearly $\alpha (u_1, u_2) \in U$ if and only if $\alpha \in \fq$.
 Hence, the point $\langle (u_1,u_2) \rangle$ has weight 1 in $L_U$.
 To make such a vector, we have $q^{k_1}$ choices for $u_1$ and $q^r-1$ choices for $u_2$.
 Since this always gives points of weight 1, this gives us
 \[
  q^{k_1} \frac{q^r-1}{q-1} = q^{k-1} + \ldots + q^{k-r}
 \]
 points of $L_U$.

 The number of points of $L_U$ represented by a vector of the form $(u_1,\mathbf 0)$ equals the size of $L_{U_1}$, which equals the number of points of $\pg(d-r,q^n)$ by \Cref{Lm:AllPoints}.
 Putting this together, we find that the size of $L_U$ is as claimed in the lemma.
\end{proof}

Now we are ready to finish the proof of \Cref{th:prime}.

\begin{proof}[Proof of \Cref{th:prime}]
 Let $s$ denote the largest integer such that $L_U$ meets an $(s-1)$-space $\sigma$ in a canonical subgeometry. 
 Then the projection $L_{\overline U}$ of $L_U$ from $\sigma$ is an $\fq$-linear set of rank $k-s$ spanning $\pg(d-s,q^n)$.
 Moreover, \Cref{lem:projectionlinear} implies that $L_{\overline U}$ does not have any points of weight 1, since no $s$-space through $\sigma$ can intersect $L_U$ in a canonical subgeometry.
 By \Cref{lm:NPrimeNoPtsOfWt1}, this means that
 \[
  (d-s)n + 2 \leq k-s \leq (d-s)n + n,
 \]
 which is equivalent to
 \[
  s = d - \floor{\frac{k-(d+2)}{n-1}} = r.
 \]
 Hence, there exists an $(r-1)$-space $\rho$ intersecting $L_U$ in a canonical subgeometry.
 Moreover, the projection $L_{\overline U}$ of $L_U$ from $\rho$ is an $\fqn$-linear set in $\pg(d-r,q^n)$ whose rank exceeds $(d-r)n$.
 By \Cref{Lm:AllPoints}, $L_{\overline U}$ contains all points of $\pg(d-r,q^n)$ and
 \[
  |L_U| \geq q^{k-1} + \ldots + q^{k-r} + \frac{q^{n(d-s+1)}-1}{q^n-1}
 \]
 by \Cref{thm:OurBound} and \Cref{prop:boundquotient}.
 The bound is tight by \Cref{Constr:Prime}.
\end{proof}

\subsection{Consequences of Theorem \texorpdfstring{\ref{thm:OurBound}}{1.4} }

When $r=1$, \Cref{thm:OurBound} looks as follows.

\begin{corollary} \label{cor:pointweightonesize}
Let $L_U$ be an $\F_q$-linear set of rank $k \geq 2$ in $\PG(d,q^n)$, admitting at least one point of weight 1. Let $I$ be the number of secant lines through some point of weight 1. Then $\lvert L_U \rvert \geq q^{k-1}+I$.
\end{corollary}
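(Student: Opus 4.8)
<br>

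The plan is to specialize \Cref{thm:OurBound} to the case $r=1$ and unpack what the various quantities become in this setting. Take $\Omega$ to be a single point $P$ of weight 1 in $L_U$; this is the $(r-1)$-space with $r=1$. Since $P$ has weight $1$, the linear set $L_U$ meets $\Omega = \{P\}$ in a canonical $\fq$-subgeometry of $\Omega$, namely $\PG(0,q)$, which is just the point $P$ itself (a rank-1 subgeometry). The hypothesis $r < k$ is satisfied because $r = 1 < 2 \leq k$. So the theorem applies directly.

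With $r=1$ the bound of \Cref{thm:OurBound} reads $|L_U| \geq q^{k-1} + I_\Omega$, where $I_\Omega$ is the number of $1$-spaces (that is, lines) through $P$ containing a point of $L_U \setminus \{P\}$. The remaining task is to identify $I_\Omega$ with the quantity $I$ in the corollary, namely the number of secant lines through $P$. Here I would use the observation that any line through $P$ containing a further point of $L_U$ is automatically a secant line (a line meeting $L_U$ in at least two points), and conversely every secant line through $P$ contains a point of $L_U \setminus \{P\}$. Hence the set of lines counted by $I_\Omega$ coincides exactly with the set of secant lines through $P$, giving $I_\Omega = I$ and therefore $|L_U| \geq q^{k-1} + I$.

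The bulk of the work is already carried by \Cref{thm:OurBound}, so the main thing to verify carefully is that the setup genuinely satisfies the hypotheses of that theorem — in particular, that a single point of weight 1 qualifies as a canonical subgeometry of the $0$-space it spans. The one subtle point, which is really just a bookkeeping matter rather than a genuine obstacle, is the identification of ``$r$-spaces through $\Omega$ containing a point of $L_U \setminus \Omega$'' with ``secant lines through a point of weight 1''; this is immediate once one notes that a line through $P$ is secant to $L_U$ precisely when it carries a point of $L_U$ other than $P$. No further estimation is needed.
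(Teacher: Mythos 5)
Your proof is correct and matches the paper exactly: the corollary is stated there as the immediate specialization of \Cref{thm:OurBound} to $r=1$, with $\Omega$ a point of weight $1$ and $I_\Omega$ identified with the number of secant lines through that point, just as you argue. Your careful check that a weight-$1$ point is a canonical subgeometry of the $0$-space it spans, and that the $1$-spaces counted by $I_\Omega$ are precisely the secant lines, is exactly the bookkeeping the paper leaves implicit.
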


In particular, this result implies that the rank of a linear set is determined by its size and the minimum weight of its points.

\begin{proposition}
 Let $L_U$ be an $\fq$-linear set spanning $\PG(d,q^n)$, containing more than one point.
 Denote $m = \min_{P \in L_U} \ww_{L_U}(P)$.
 Then the rank of $L_U$ is the unique integer $k$ satisfying
 \[
  q^{k-m} + 1 \leq |L_U| \leq \frac{q^k-1}{q^m-1},
 \]
 i.e.\ $k = \lceil \log_q(|L_U|) \rceil + m - 1 = \lfloor \log_q(|L_U|) \rfloor + m$.
\end{proposition}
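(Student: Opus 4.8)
The plan is to prove the two displayed inequalities for the rank $k$ separately, and then show that together they force a unique value of $k$ coinciding with the stated closed form. For the \emph{upper} bound $|L_U| \leq \frac{q^k-1}{q^m-1}$, I would invoke the weight identity \eqref{eq:pesivett}. Since $m$ is the minimum weight, $N_i = 0$ for every $i < m$, so
\[
 \frac{q^k-1}{q-1} = \sum_{i=m}^n N_i \frac{q^i-1}{q-1} \geq \frac{q^m-1}{q-1}\sum_{i=m}^n N_i = \frac{q^m-1}{q-1}\,|L_U|,
\]
using \eqref{eq:pesicard} for the last equality; rearranging gives the bound.

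For the \emph{lower} bound $|L_U| \geq q^{k-m}+1$, I would first observe that since $L_U$ has more than one point, no point can have weight $k$ (a point $\langle u\rangle_{\fqn}$ of weight $k$ would force $U \subseteq \langle u\rangle_{\fqn}$, so that $L_U$ would be a single point); hence $m < k$. Then I would apply \Cref{Lm:PointOfWeight1} to obtain a coinciding linear set $L_{U'}$ of rank $k-m+1 \geq 2$ containing a point $P$ of weight $1$, so $|L_U| = |L_{U'}|$. Applying \Cref{cor:pointweightonesize} to $L_{U'}$ yields $|L_{U'}| \geq q^{(k-m+1)-1} + I = q^{k-m} + I$, where $I \geq 1$ because $L_U$ has a second point $Q$ and the line $\langle P, Q\rangle$ is secant to $L_U$.

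For uniqueness and the formula, I would first note that \eqref{eq:modqsize} gives $|L_U| \equiv 1 \pmod q$, which together with $|L_U| > 1$ rules out $|L_U|$ being a power of $q$; hence $\log_q|L_U| \notin \mathbb{Z}$ and the two expressions $\lceil \log_q|L_U|\rceil + m - 1$ and $\lfloor \log_q|L_U|\rfloor + m$ agree. Writing $N = |L_U|$ and $t = \lfloor \log_q N\rfloor$, so that $q^t < N < q^{t+1}$, the lower bound gives $q^{k-m} < N < q^{t+1}$, whence $k \leq m+t$. The upper bound gives $q^k > N(q^m-1) > q^t(q^m-1)$; assuming $k \leq m+t-1$ and dividing by $q^t$ would force $q^{m-1} > q^m-1$, i.e.\ $q^{m-1}(q-1) < 1$, which is impossible for $q \geq 2$, $m \geq 1$. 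Thus $k = m+t$, the unique integer satisfying both inequalities.

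The only mildly delicate points are the two preliminary observations needed before invoking \Cref{cor:pointweightonesize} in the lower bound, namely $k > m$ and $I \geq 1$; once these are in place the remaining arguments are elementary estimates, so I do not expect a genuine obstacle.
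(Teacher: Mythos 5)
Your proof is correct and follows essentially the same route as the paper: the lower bound via \Cref{Lm:PointOfWeight1} and \Cref{cor:pointweightonesize}, and the upper bound via \eqref{eq:pesicard} and \eqref{eq:pesivett}. The extra checks you supply (that $m<k$, that $I\geq 1$, and the uniqueness argument using \eqref{eq:modqsize}) are details the paper leaves implicit, and they all hold.
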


\begin{proof}
 By \Cref{Lm:PointOfWeight1}, $L_U$ coincides as point set with an $\fq$-linear set $L_{U'}$ of rank $k-m+1$, containing points of weight 1.
 By \Cref{thm:OurBound}, $|L_{U'}| \geq q^{k-m}+1$.
 The lower bound follows from \Cref{Lm:PointOfWeight1} and \Cref{cor:pointweightonesize}.
 By \Cref{eq:pesivett},
 \[
  (q^m-1) \lvert L_U \rvert=(q^m-1) \sum_{i=m}^n N_i \leq \sum_{i=m}^n N_i(q^i-1) = q^k-1 . \qedhere
 \]
\end{proof}

Another consequence of \Cref{cor:pointweightonesize} is that any $\fq$-linear set is spanned by its points of minimum weight, (cf.\ \cite[Lemma 2.2]{bonoli2005fqlinear} for linear sets on $\pg(1,q^n)$).

\begin{proposition}
If an $\fq$-linear set $L_U$ spans $\pg(d,q^n)$, then its points of minimum weight also span $\pg(d,q^n)$.
\end{proposition}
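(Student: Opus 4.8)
The plan is to prove the statement by induction on $d$, projecting from a single point of minimum weight, with the projective line as the base case. Throughout, write $m = \min_{P \in L_U} \ww_{L_U}(P)$ for the minimum weight.

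For the base case $d = 1$, I would show directly that a linear set of rank $k \ge 2$ spanning $\PG(1,q^n)$ has at least two points of minimum weight, so that they span the line. Suppose, for contradiction, that a single point $P$ attains weight $m$, so every other point has weight at least $m+1$. The preceding proposition (that the rank is determined by the size and minimum weight) gives $|L_U| \ge q^{k-m}+1$, hence there are at least $q^{k-m}$ points besides $P$. Counting the nonzero vectors of $U$ according to the weights of the points they represent then yields
\[
 q^k - 1 \;\ge\; q^{k-m}\bigl(q^{m+1}-1\bigr) + \bigl(q^m-1\bigr),
\]
which rearranges to $q^{k-m} \ge q^k(q-1)+q^m$; this is impossible, since $q^{k-m}\le q^{k-1}<q^k(q-1)+q^m$.

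For the inductive step $d \ge 2$, I would fix a point $P$ of minimum weight and project $L_U$ from $P$. By \Cref{lem:projectionlinear} the projection $L_{\overline U}$ spans $\PG(d-1,q^n)$, so by the induction hypothesis its points of minimum weight span $\PG(d-1,q^n)$; call them $\overline{Q}_1,\overline{Q}_2,\dots$. Each $\overline{Q}_i$ is the image of a line $\ell_i$ through $P$, and because the $\overline{Q}_i$ span the quotient, the lines $\ell_i$ span $\PG(d,q^n)$. On each $\ell_i$ the restriction $L_U\cap\ell_i$ contains $P$ with weight $m$; since the weight of a point is unchanged upon restricting to a subspace through it, while $m$ is the global minimum, $L_U\cap\ell_i$ again has minimum weight exactly $m$. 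Applying the base case to $L_U\cap\ell_i$ (which has rank at least $2$, as it contains $P$ and a point of $\ell_i\setminus\{P\}$ coming from $\overline{Q}_i$) produces a second minimum-weight point $R_i \ne P$ on $\ell_i$. Then $\langle P,R_i\rangle = \ell_i$, so $P$ together with the $R_i$ span $\langle \ell_i : i\rangle = \PG(d,q^n)$; as all of these are points of weight $m$, the minimum-weight points span.

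The delicate point is the base case, or rather the reason one is forced into it: a naive global vector-count over all of $\PG(d,q^n)$ is too lossy for small $q$, because a priori the minimum-weight points could fill a large proper subspace $\sigma$ with $\ww_{L_U}(\sigma)$ close to $k$, and then the weight-$(\ge m+1)$ points outside $\sigma$ do not force enough total size. Projecting from a single minimum-weight point sidesteps this, since it reduces the problem to lines, on which the minimum weight is provably still attained at $P$ and the counting argument closes cleanly.
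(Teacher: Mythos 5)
Your argument is correct, but it follows a genuinely different route from the paper's. The paper argues by contradiction in one step: assuming the minimum-weight points all lie in a hyperplane $\pi = \PG(W,\fqn)$, it splits $U = (U\cap W)\oplus U_2$, drops $m-1$ dimensions from $U\cap W$ and one from $U_2$, and checks that the resulting $(k-m)$-dimensional subspace $U'$ still satisfies $L_{U'}=L_U$ as point sets; the bound $|L_{U'}|\leq \frac{q^{k-m}-1}{q-1} < q^{k-m} < |L_U|$ then gives the contradiction. You instead induct on $d$, projecting from a single minimum-weight point and reducing to the line, where a direct vector count (each point of weight $w$ absorbing $q^w-1$ nonzero vectors of $U$) shows the minimum weight is attained at least twice. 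Both proofs ultimately rest on the same earlier bound $|L_U|\geq q^{k-m}+1$. Your base case is a slightly stronger statement (at least two minimum-weight points on a line) and your construction is more explicit, producing a concrete spanning frame $P, R_1,\dots,R_d$ of minimum-weight points; note also that in the inductive step you do not actually need the induction hypothesis — any points of $L_{\overline U}$ spanning the quotient would serve for the $\overline{Q}_i$, so the whole argument reduces to the line case plus projection. The paper's proof is shorter and handles all dimensions uniformly, at the cost of being purely a nonconstructive contradiction. All the delicate steps in your version check out: weights are preserved under restriction to a subspace through the point, each point of the projection corresponds to a line through $P$ meeting $L_U\setminus\{P\}$ by \Cref{lem:projectionlinear}, and the inequality $q^{k-m}\geq q^k(q-1)+q^m$ is indeed absurd since $m\geq 1$.
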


\begin{proof}
Suppose that $L_U$ is an $\fq$-linear set of rank $k$, spanning $\pg(d,q^n)$, and denote $m = \min_{P \in L_U}{\ww_{L_U}(P)}$.
By Corollary \ref{cor:pointweightonesize}, $|L_U| > q^{k-m}$.
Now assume that the points of weight $m$ of $L_U$ lie in a hyperplane $\pi=\PG(W,\F_{q^n})$ of $\PG(d,q^n)$.
Suppose that $U_1 = U \cap W$.
Then there exits a subspace $U_2$ of $U$ such that $U = U_1 \oplus U_2$.
Now let $U_1'$ be an $\fq$-subspace of $U_1$ of codimension $m-1$, and let $U_2'$ be an $\fq$-subspace of $U_2$ of codimension 1.
Let $U' = U_1' \oplus U_2'$, then $L_{U'}$ and $L_U$ coincide as point sets.
Indeed, if $P \in L_U \cap \pi$, then $\ww_{L_U}(P) = \ww_{L_{U_1}}(P) \geq m$.
As in the proof of \Cref{Lm:PointOfWeight1}, this implies that $\ww_{L_{U'}}(P) = \ww_{L_{U_1'}}(P) \geq 1$.
If $P \in L_U \setminus \pi$, then $\ww_{L_U}(P) \geq m+1$, and as in the proof of \Cref{Lm:PointOfWeight1}, $\ww_{L_{U'}}(P) \geq 1$.
But
\[
 \dim_q U' = (\dim_q U_1 - (m-1)) + (\dim_q U_2 - 1) = k-m.
\]
Hence, by \Cref{eq:card}, $|L_U| = |L_{U'}| \leq \frac{q^{k-m}-1}{q-1} < q^{k-m}$, a contradiction.
\end{proof}

\section{Constructions of \texorpdfstring{$d$}{d}-minimum size linear sets}
 \label{sec:d-minSize}

\subsection{Exploring the Jena-Van de Voorde construction}

Recently, Jena and Van de Voorde constructed $d$-minimum size linear sets admitting points of complementary weights, and they completely determined their weight spectrum and weight distribution.
Recall that if $\lambda \in \fqn$, then the \emph{degree} of $\lambda$ over $\fq$ equals the degree of the minimal polynomial of $\lambda$ over $\fq$, or equivalently the smallest integer $t$ such that $\lambda \in \F_{q^t}$.

\begin{construction}[{\cite[Theorem 2.17]{Jena2021onlinearsets}}] 
 \label{cs:constructionVdV}
Suppose that $\lambda \in \fqn$ has degree $t > 1$ over $\fq$.
Choose positive integers $k_0 \geq \ldots \geq k_d$ such that $k_0 + k_1 \leq t+1$.
Define
\begin{align*}
 JV_{q,n}(\lambda,t;k_0,\dots,k_d) &= \langle 1 ,\lambda,\ldots, \lambda^{k_0-1} \rangle_{\F_q} \times \ldots \times \langle 1 ,\lambda,\ldots, \lambda^{k_d-1} \rangle_{\F_q} \\
   &= \{ (f_0(\lambda), \dots, f_d(\lambda)) \colon f_i \in \fq[X], \, \deg(f_i) < k_i \}.
\end{align*}
Then $L_{JV_{q,n}(\lambda,t;k_0,\dots,k_d)}$ is a $d$-minimum size $\fq$-linear set in $\pg(d,q^n)$ of rank {$k_0 + \ldots + k_{d}$}.
\end{construction}

Note that since $JV_{q,n}(\lambda,t;k_0,\ldots,k_d)$ is a Cartesian product of $\F_q$-subspaces of $\F_{q^n}$, it indeed admits points of complementary weights.

Before proceeding, we make some conventions regarding polynomials.

\begin{definition}
 Given two polynomials $f, g \in \fq[X]$, let $\gcd(f,g)$ denote the unique monic polynomial of maximal degree that divides $f$ and $g$.
 We call $f$ and $g$ \emph{coprime} if $\gcd(f,g) = 1$.
 Furthermore, we will use the convention that the degree of the zero polynomial is $-\infty$, so that the equality $\deg(f \cdot g) = \deg f + \deg g$ still holds if $f$ or $g$ is the zero polynomial.
\end{definition}

\begin{remark}[{\cite[Remark 2.19]{Jena2021onlinearsets}}]
 \label{rk:WeightSpectrumJena}
Jena and Van de Voorde also determined the weight spectrum of the above linear set.
It is $(1,\dots,k_0)$ if $k_1 = k_0$, and $(1,\dots,k_1,k_0)$ if $k_1 < k_0$, in which case $E_0$ is the unique point of weight $k_0$.
They also described the weight distribution, but since it is rather involved, we omit it here.
It follows from their arguments that if $\gcd(f_0, \dots, f_d) = 1$, then
\begin{equation} \label{eq:weightjenavandevoorde}
 \ww_{L_U}\left( \langle f_0(\lambda), \dots, f_d(\lambda) \rangle_{\fqn} \right)
 = \min_{0 \leq i \leq d} \{k_i - \deg(f_i)\}.
\end{equation}
This makes it relatively easy to determine $N_i$ for some large values of $i$.
For instance, let \[
 U = JV_{q,n}(\lambda,t;k_0,\dots,k_d) \subseteq \F_{q^n}^{d+1},
\]
and assume that $k_1 < k_0$. As stated above, $E_0$ is the unique point of weight $k_0$, and the second largest weight of $L_{U}$ is $k_1$.
We can determine $N_{k_1}(L_U)$. 
Let $m$ denote the number of indices $j$ with $k_j = k_1$, i.e.\ $k_1= \ldots = k_{m} > k_{m+1}$. Let $P=\langle f_0(\lambda), \dots, f_d(\lambda) \rangle_{\fqn} \in L_U$, with $\gcd(f_0, \dots, f_d) = 1$. Then, by \eqref{eq:weightjenavandevoorde}, $P$ has weight $k_1$ if and only if $\deg(f_0) \leq k_0-k_1$, $\deg(f_i) \leq 0$, for $i=1,\ldots,m$ and $f_i = 0$, for $i>m$ and there exists some $j \in \{1,\ldots,m\}$ such that $\deg(f_j) > 0$.
Then, 
\[
 N_{k_1}(L_{U}) = q^{k_0 - k_1 + 1}\frac{q^{m}-1}{q-1}.
\]
\end{remark}

The above construction has the following consequence on the existence of $d$-minimum size linear sets in $\pg(d,q^n)$. 

\begin{corollary} [{\cite[Corollary 2.18]{Jena2021onlinearsets}}] 
 \label{cor:maximumsizeminimum}
There exists a $d$-minimum size $\F_q$-linear set of rank $k$ in $\PG(d,q^n)$ whenever
\[
 d < k \leq \begin{cases}
  (d+1)\frac{n+1}{2} & \text{if $n$ is odd,} \\
  (d+1)\frac{n}{2} + 1 & \text{if $n$ is even.}
 \end{cases} 
\]
\end{corollary}


We now present a sufficient condition for the linear set of \Cref{cs:constructionVdV} to be of proper $d$-minimum size.

\begin{theorem} \label{prop:properminimumjena}
Consider $U = JV_{q,n}(\lambda,t;k_0,\dots,k_d)$ as in \Cref{cs:constructionVdV}.
Suppose that there exist pairwise coprime polynomials $g_0, \dots, g_d \in \fq[X]$ such that for each $i$, $\deg g_i = k_i - 1$.
If $k_0 + \ldots + k_d \leq t+d$, then $L_U$ is of proper $d$-minimum size.
\end{theorem}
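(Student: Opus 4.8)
The plan is to produce a single hyperplane $\Omega = \pg(W,\fqn)$ that meets $L_U$ in a canonical $\fq$-subgeometry of $\Omega$; this is the only nontrivial task. Indeed, write $k = k_0 + \dots + k_d$ for the rank of $L_U$. Once such an $\Omega$ is found, the unique $d$-space through the hyperplane $\Omega$ is $\pg(d,q^n)$ itself, which meets $L_U \setminus \Omega$, so the quantity $I_\Omega$ of \Cref{thm:OurBound} equals $1$. As $r = d < k$, \Cref{thm:OurBound} then yields $|L_U| \ge q^{k-1} + \dots + q^{k-d} + 1$, whereas \Cref{cs:constructionVdV} guarantees that $L_U$ is of $d$-minimum size, i.e.\ $|L_U| = q^{k-1} + \dots + q^{k-d} + 1$. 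Hence equality holds, and $L_U$ is of $(d,d,\Omega)$-minimum size, which is exactly proper $d$-minimum size.

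To build $\Omega$, set $G_i = \prod_{j \ne i} g_j$ and let $W = \{ x \in \fqn^{d+1} : \sum_{i=0}^d G_i(\lambda)\, x_i = 0 \}$. (One checks that each $G_i(\lambda) \ne 0$, using $\deg G_i < t$, so that $\Omega = \pg(W,\fqn)$ is a genuine hyperplane.) A point $\langle (f_0(\lambda), \dots, f_d(\lambda)) \rangle_{\fqn}$ of $L_U$, with $\deg f_i < k_i$, lies in $\Omega$ precisely when $\big( \sum_i G_i f_i \big)(\lambda) = 0$. The decisive observation is a degree count: $\deg(G_i f_i) \le \sum_{j \ne i}(k_j - 1) + (k_i - 1) = k - (d+1)$, so $\deg \sum_i G_i f_i \le k - d - 1$, and the hypothesis $k \le t + d$ forces this to be at most $t - 1 < t$. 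Since $\lambda$ has degree $t$ over $\fq$, the evaluation vanishes if and only if $\sum_i G_i f_i = 0$ in $\fq[X]$. As the parametrization $(f_0,\dots,f_d) \mapsto (f_0(\lambda),\dots,f_d(\lambda))$ of $U$ is injective (each $k_i \le t$), the space $U \cap W$ is identified with the kernel of the $\fq$-linear map $\Phi \colon (f_0, \dots, f_d) \mapsto \sum_i G_i f_i$ on tuples with $\deg f_i < k_i$.

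The kernel I would identify by partial fractions. With $G = \prod_i g_i$ one has $\Phi(f_0, \dots, f_d) = G \cdot \sum_i f_i/g_i$. Dividing, $f_i = c_i g_i + \tilde f_i$ with $c_i \in \fq$ and $\deg \tilde f_i < \deg g_i$ (the quotient is a constant because $\deg f_i \le \deg g_i$), whence $\Phi(f) = \big( \sum_i c_i \big) G + \sum_i \tilde f_i G_i$ and the last sum has degree $< \deg G$. Therefore $\Phi(f) = 0$ forces $\sum_i c_i = 0$ and, by uniqueness of partial fraction decompositions (this is where the pairwise coprimality of the $g_i$ enters), $\tilde f_i = 0$ for every $i$. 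Consequently
\[
 U \cap W = T := \{ (c_0 g_0(\lambda), \dots, c_d g_d(\lambda)) : c_i \in \fq, \ \textstyle\sum_i c_i = 0 \},
\]
an $\fq$-subspace of dimension $d$.

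Finally I would verify that $L_U \cap \Omega = L_T$ is a canonical subgeometry of $\Omega$. Each $g_i(\lambda) \ne 0$ since $\deg g_i = k_i - 1 < t$, so the $d$ vectors $g_0(\lambda)\e_0 - g_i(\lambda)\e_i$ ($1 \le i \le d$) lie in $T$ and are $\fqn$-independent; hence $\langle T \rangle_{\fqn} = W$ has $\fqn$-dimension $d$. As $\dim_{\fq} T = d = \dim_{\fqn} W$, the linear set $L_T$ is a subgeometry of rank $d$, i.e.\ a canonical subgeometry of the hyperplane $\Omega$, as required. I expect the middle step---the exact computation of $U \cap W$---to be the main obstacle, since it is precisely there that both hypotheses are consumed: the bound $k \le t + d$ turns the vanishing evaluation at $\lambda$ into a polynomial identity, and the pairwise coprimality together with $\deg g_i = k_i - 1$ drives the partial fraction argument that pins the kernel down to $T$.
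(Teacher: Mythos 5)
Your proof is correct and follows essentially the same strategy as the paper's: reduce to exhibiting a hyperplane meeting $L_U$ in a canonical subgeometry, define it by a linear equation whose coefficients are the products $\prod_{j\neq i} g_j(\lambda)$, use the degree bound $k \leq t+d$ to upgrade vanishing at $\lambda$ to a polynomial identity, and use pairwise coprimality to pin each $f_i$ down to an $\fq$-multiple of $g_i$. The only cosmetic differences are that the paper takes the hyperplane $\langle \e_0 + g_i(\lambda)\e_i : 1 \leq i \leq d\rangle$, which singles out coordinate $0$ and never actually uses $g_0$, whereas your symmetric choice of hyperplane and your partial-fraction phrasing of the divisibility step involve all of $g_0,\dots,g_d$.
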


\begin{proof}
By \Cref{cs:constructionVdV}, we know that $L_U$ is an $\fq$-linear set in $\PG(d,q^n)$ of rank $k=k_0+\ldots+k_{d}$ of $d$-minimum size.
So it remains to prove that there exists a hyperplane of $\PG(d,q^n)$ meeting $L_U$ in a canonical subgeometry.
Consider the points $P_i=\langle \e_0 + g_i(\lambda) \e_i \rangle_{\F_{q^n}}$ for $i=1,\dots,d$.
Clearly, $P_1, \dots, P_d$ are independent, hence they span a hyperplane.
Define the polynomial
\[
 G(X) = \prod_{i=1}^d g_i(X).
\]
Note that for each $i \geq 1$, the polynomial
\[
 \left( \frac G {g_i} \right)(X) = \prod_{\substack{j=1 \\ j \neq i}}^d g_j(X)
\]
is well-defined.
Then the equation of the hyperplane $\Pi = \langle P_1, \dots, P_d \rangle_{\fqn}$ of $\pg(d,q^n)$ is
\begin{equation}
 G(\lambda) X_0 = \sum_{i=1}^d \left( \frac G {g_i} \right) (\lambda) X_i.
 \label{eq:Hyperplane}
\end{equation}
Let $k = k_0 + \ldots + k_d$ denote the rank of $L_U$.
Then
\begin{align*}
 \deg G = \sum_{i=1}^d (k_i-1) = k - k_0 - d < t,
\end{align*}
hence $G(\lambda) \neq 0$, and Equation \eqref{eq:Hyperplane} does indeed define a hyperplane.
Now take a non-zero vector $v = (f_0(\lambda), \dots, f_d(\lambda)) \in U$, and suppose that $\langle v \rangle_{\fqn} \in \Pi$.
Then
\begin{equation}
 G(\lambda) f_0(\lambda) = \sum_{i=1}^d \left( \frac G {g_i} \right) (\lambda) f_i(\lambda).
 \label{eq:PointInHyperplane}
\end{equation}
Every term in \Cref{eq:PointInHyperplane} is a polynomial in $\lambda$, and
\begin{align*}
 \deg (G f_0) & = \deg G + \deg f_0 = (k-k_0-d) + \deg f_0 < t, \\
 \deg ((G/g_i) f_i) &= \deg G + \deg f_i - \deg(g_i) \leq \deg(G) < t.
\end{align*}
Since $1, \lambda, \dots, \lambda^{t-1}$ are $\fq$-linearly independent, \Cref{eq:PointInHyperplane} implies that
\begin{equation*}
 G(X) f_0(X) = \sum_{i=1}^d \left( \frac G {g_i} \right)(X) f_i (X).
\end{equation*}

On the one hand, this implies that $f_0$ is a constant polynomial.
Otherwise, the left-hand side has degree greater than $\deg(G)$, but the degree of the right-hand side is at most $\deg(G)$, a contradiction.
On the other hand, for each $i$,
\[
 g_i(X) \mid \left( G(X) f_0(X) - \sum_{1 \leq j \neq i} \left(\frac G {g_j} \right) (X) f_j(X) \right) = \left(\frac G {g_i}\right)(X) f_i(X).
\]
Since $G/g_i$ is coprime with $g_i$, and $\deg(f_i) \leq \deg (g_i)$ this is only possible if $f_i$ is an {$\F_q^*$}-multiple of $g_i$.
Hence,
\[
 v = (\alpha_0, \alpha_1 g_1(\lambda), \dots, \alpha_d g_d(\lambda)),
\]
for some scalars $\alpha_0, \dots, \alpha_d \in \fq$.
Moreover, since $\langle v \rangle_{\fqn} \in \Pi$, $\alpha_0 = \alpha_1 + \ldots + \alpha_d$.
Hence, $L_U$ intersects $\Pi$ in the linear set $L_W$, with
\[
 W = \left\{ \sum_{i=1}^d \alpha_i (\e_0 + g_i(\lambda) \e_i) \colon \alpha_i \in \fq \right\}.
\]
Therefore, $L_U$ intersects $\Pi$ in a canonical subgeometry.
\end{proof}

A sufficient condition to ensure the existence of pairwise coprime polynomials $g_0,\ldots,g_{d} \in \F_q[X]$ such that $\deg(g_i)=k_i-1$, is to choose the size of the ground field large enough. 

\begin{proposition}
    Consider $U = JV_{q,n}(\lambda,t;k_0,\dots,k_d)$ as in \Cref{cs:constructionVdV}, with $k_0 + \ldots + k_d \leq t+d$.
Assume that 
\[
\sum_{i=0}^d k_i-d-1 \leq q.
\]
Then $L_U$ is of proper $d$-minimum size.
\end{proposition}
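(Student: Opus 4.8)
The plan is to reduce everything to \Cref{prop:properminimumjena}: since the remaining hypothesis $k_0 + \ldots + k_d \leq t+d$ is assumed outright, it suffices to exhibit pairwise coprime polynomials $g_0, \dots, g_d \in \fq[X]$ with $\deg g_i = k_i - 1$ for each $i$. The total degree of such a collection would be
\[
 \sum_{i=0}^d (k_i - 1) = \sum_{i=0}^d k_i - (d+1) \leq q,
\]
which is precisely the standing assumption $\sum_{i=0}^d k_i - d - 1 \leq q$. This matching of the degree sum with $q = |\fq|$ strongly suggests building each $g_i$ out of distinct linear factors over $\fq$, so that coprimality is obtained for free by keeping the root sets disjoint.

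Concretely, I would set $N = \sum_{i=0}^d (k_i-1)$ and choose $N$ pairwise distinct elements of $\fq$; this is possible exactly because $N \leq q = |\fq|$. I would then partition these elements into $d+1$ blocks $B_0, \dots, B_d$ with $|B_i| = k_i - 1$, and define $g_i = \prod_{a \in B_i}(X - a)$, with the convention that the empty product equals the constant polynomial $1$ when $k_i = 1$. Each $g_i$ then has degree exactly $k_i - 1$, and since the blocks $B_i$ are pairwise disjoint, no two of the $g_i$ share a common root in $\fq$; as products of distinct monic linear factors they therefore have trivial greatest common divisor, i.e.\ they are pairwise coprime. Plugging this family into \Cref{prop:properminimumjena} immediately gives that $L_U$ is of proper $d$-minimum size.

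There is no substantial obstacle here: the whole content is the observation that prescribed-degree coprime polynomials exist as soon as the ground field contains enough points to supply disjoint root sets, and the bound $\sum_{i} k_i - d - 1 \leq q$ is exactly what guarantees this. The only point deserving explicit mention is the degenerate case $k_i = 1$, in which $g_i$ reduces to a nonzero constant of degree $0$; such a $g_i$ trivially satisfies the degree requirement and is coprime to every other polynomial, so it causes no difficulty.
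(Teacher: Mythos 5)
Your proposal is correct and is essentially identical to the paper's own proof: both take pairwise disjoint subsets of $\F_q$ of sizes $k_i-1$ (possible precisely because $\sum_i k_i - d - 1 \leq q$), form $g_i$ as the product of the corresponding linear factors, and invoke \Cref{prop:properminimumjena}. Your explicit remark on the degenerate case $k_i=1$ is a small welcome addition but changes nothing of substance.
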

\begin{proof}
    By the hypothesis, we can consider $d+1$ subsets $S_0,\ldots,S_{d}$ of $\F_q$ that are pairwise disjoint and such that $\lvert S_i \rvert=k_i-1$, for each $i \in \{0,\ldots,d\}$. Then, we can define $g_i(x)=\prod_{\alpha \in S_i}(x-\alpha_i)$. So the assertion follows by \Cref{prop:properminimumjena}.
\end{proof}

Another sufficient condition to ensure the existence of pairwise coprime polynomials $g_0,\ldots,g_{d} \in \F_q[X]$ such that $\deg(g_i)=k_i-1$, is that the $g_i$'s are different monic irreducible polynomials over $\F_q$.
It is well known, see e.g.\ \cite[Theorem 3.25]{lidl1997finite}, that the number of monic irreducible polynomials of degree $s$ over the finite field $\F_q$ is given by Gauss’s formula 
\[
\frac{1}{s} \sum_{h \mid s} \mu (s/h)q^h,
\]
where $h$ runs over the set of all positive divisors of $s$ and $\mu$ denotes the Möbius function.

\begin{remark}
We note the following lower bound on the number of monic irreducible polynomials of degree $s$ over $\F_q$, see e.g.\ \cite{blaser2012computational}:
\[
\frac{1}{s} \sum_{h \mid s} \mu \left(s/h\right)q^h \geq \frac{q^s-2q^{s/2}}{s}.
\]
\end{remark}

So we get the following corollary.

\begin{corollary} \label{cor:necconditionhyper}
Consider $U = JV_{q,n}(\lambda,t;k_0,\dots,k_d)$ as in \Cref{cs:constructionVdV}, with $k_0 + \ldots + k_d \leq t+d$.
For each $s = 1, \dots, t$, suppose that
\[
 |\{ i \colon k_i-1 = s \} | \leq \frac{q^s-2q^{s/2}}{s}.
\]
Then $L_U$ is of proper $d$-minimum size.
\end{corollary}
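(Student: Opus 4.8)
The plan is to invoke \Cref{prop:properminimumjena}, which reduces the claim to exhibiting pairwise coprime polynomials $g_0, \dots, g_d \in \fq[X]$ with $\deg g_i = k_i - 1$ for each $i$. Since the hypothesis $k_0 + \ldots + k_d \le t + d$ is precisely the degree assumption of \Cref{prop:properminimumjena}, once such polynomials are produced we are finished. Thus the whole task becomes a counting argument: for every prescribed degree, we must be able to select enough pairwise coprime polynomials of that degree.

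First I would dispose of the degenerate degree. Whenever $k_i = 1$, the polynomial $g_i$ must have degree $0$, so I simply set $g_i = 1$; the constant polynomial $1$ is coprime with every other polynomial, so these indices cause no difficulty and are subject to no counting constraint. This is consistent with the corollary imposing conditions only for $s \ge 1$. For each $s \in \{1, \dots, t\}$, let $I_s = \{ i : k_i - 1 = s \}$, and take the polynomials $g_i$ with $i \in I_s$ to be \emph{distinct monic irreducible polynomials of degree $s$} over $\fq$. By the lower bound recorded in the preceding remark, the number of monic irreducible polynomials of degree $s$ over $\fq$ is at least $\frac{q^s - 2q^{s/2}}{s}$, which by hypothesis is at least $|I_s|$; hence such an assignment of distinct irreducibles is possible. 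Moreover, since $k_0 \ge \ldots \ge k_d \ge 1$ and $k_0 + k_1 \le t + 1$ force $k_0 - 1 \le t - 1$, every degree $k_i - 1$ that actually occurs lies in $\{0, 1, \dots, t-1\}$, so the range $s = 1, \dots, t$ in the hypothesis covers all the positive degrees we require.

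Finally I would check pairwise coprimality of the resulting $g_0, \dots, g_d$. Two distinct monic irreducible polynomials — whether of equal or of different degree — share no nonconstant common factor, hence are coprime, and each $g_i = 1$ is coprime with everything. Therefore $g_0, \dots, g_d$ are pairwise coprime with $\deg g_i = k_i - 1$ for all $i$, and \Cref{prop:properminimumjena} yields that $L_U$ is of proper $d$-minimum size. The argument presents no genuine obstacle beyond this bookkeeping; the only point demanding a moment's care is the degree-$0$ case, which explains why the corollary's hypothesis is restricted to $s \ge 1$.
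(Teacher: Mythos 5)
Your proposal is correct and follows essentially the same route as the paper: for each degree $s\ge 1$ assign distinct monic irreducible polynomials of degree $s$ (available in sufficient number by the stated lower bound on Gauss's formula), note that distinct irreducibles are pairwise coprime, and invoke \Cref{prop:properminimumjena}. Your explicit handling of the degree-$0$ indices via $g_i=1$ is a small point the paper leaves implicit, but it does not change the argument.
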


Clearly, if the rank of a linear set $L_U$ obtained from  \Cref{cs:constructionVdV} is greater than $n+d$, then every hyperplane has weight at least $d+1$ in $L_U$, so $L_U$ cannot be of proper $d$-minimum size.
In case the rank exceeds $n+d$, we can prove that $L_U$ is of $(1,d)$-minimum size under some constraints on the rank.

\begin{proposition}
Let $U = JV_{q,n}(\lambda,t;k_0, \dots, k_d)$ be as in \Cref{cs:constructionVdV}.
If $k_0 + k_{d-1} + k_d \leq t + 2$, then $L_U$ is a $(1,d)$-minimum size $\fq$-linear set.
\end{proposition}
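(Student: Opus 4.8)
The plan is to exhibit an explicit point $\Omega$ of weight $1$ whose projection turns $L_U$ into a lower-dimensional Jena--Van de Voorde linear set of minimum size, and then to read off the size of $L_U$ via \Cref{cor:pointweightonesize}. Since $L_U$ is of $d$-minimum size by \Cref{cs:constructionVdV}, we have $|L_U| = q^{k-1} + \ldots + q^{k-d} + 1$ with $k = k_0 + \ldots + k_d$. Combining \Cref{cor:pointweightonesize} with \Cref{lem:projectionlinear}, for \emph{any} weight-$1$ point $\Omega$ the projection $L_{\overline U}$ satisfies $|L_{\overline U}| = I_\Omega \le |L_U| - q^{k-1} = q^{k-2} + \ldots + q^{k-d} + 1$. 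To prove $(1,d)$-minimum size it therefore suffices to produce a single $\Omega$ attaining equality, i.e.\ one for which the rank-$(k-1)$ linear set $L_{\overline U}$ in $\pg(d-1,q^n)$ is itself of $(d-1)$-minimum size.

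First I would take $\Omega = \langle w \rangle_{\fqn}$ with $w = \e_{d-1} + \lambda^{k_d-1}\e_d \in U$, and check that $\Omega$ has weight $1$: a scalar multiple $p(\lambda)w$ lies in $U$ only if $p(\lambda)\lambda^{k_d-1} \in \langle 1,\ldots,\lambda^{k_d-1}\rangle_{\fq}$, and since $\deg p \le k_{d-1}-1$ keeps the degree $\deg p + k_d - 1 \le k_{d-1}+k_d-2 < t$ (using $k_0 + k_{d-1} + k_d \le t+2$ and $k_0 \ge 1$), the identification is faithful and this forces $p$ to be constant. Then I would describe the projection by the $\fqn$-linear map $\pi(x_0,\ldots,x_d) = (x_0,\ldots,x_{d-2},\, x_d - \lambda^{k_d-1}x_{d-1})$, whose kernel is exactly $\langle w\rangle_{\fqn}$, so that $\overline U = \pi(U) = U_0 \times \cdots \times U_{d-2} \times U'$ with $U' = \langle 1,\ldots,\lambda^{k_d-1}\rangle_{\fq} + \lambda^{k_d-1}\langle 1,\ldots,\lambda^{k_{d-1}-1}\rangle_{\fq}$. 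The key computation is that these two degree ranges overlap exactly in degree $k_d-1$, so $U' = \langle 1,\lambda,\ldots,\lambda^{k_{d-1}+k_d-2}\rangle_{\fq}$, the standard Jena--Van de Voorde factor of dimension $k_{d-1}+k_d-1$. Hence $\overline U = JV_{q,n}(\lambda,t;k_0,\ldots,k_{d-2},k_{d-1}+k_d-1)$.

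It then remains to verify that this new tuple satisfies the hypotheses of \Cref{cs:constructionVdV}, which is where the assumption $k_0 + k_{d-1} + k_d \le t+2$ is used: up to reordering the largest parameter is $k_0$ and the second largest is $\max\{k_1,\, k_{d-1}+k_d-1\}$, and the two candidate sums $k_0 + k_1$ and $k_0 + (k_{d-1}+k_d-1)$ are both at most $t+1$ (the first by the standing hypothesis of \Cref{cs:constructionVdV}, the second by the new assumption). Therefore $L_{\overline U}$ is a $(d-1)$-minimum size linear set of rank $k-1$ in $\pg(d-1,q^n)$, giving $|L_{\overline U}| = q^{k-2} + \ldots + q^{k-d} + 1$. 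Plugging this into \Cref{cor:pointweightonesize} yields $|L_U| \ge q^{k-1} + |L_{\overline U}| = q^{k-1} + \ldots + q^{k-d} + 1$, and since $L_U$ is of $d$-minimum size this is an equality, witnessing that $L_U$ is of $(1,d,\Omega)$-minimum size.

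I expect the main obstacle to be the bookkeeping in the projection step: confirming that $\pi$ genuinely realizes the projection from $\Omega$, that $\overline U$ splits as the claimed Cartesian product, and above all that the overlapping-degrees argument yields the \emph{standard} factor $\langle 1,\ldots,\lambda^{k_{d-1}+k_d-2}\rangle_{\fq}$ rather than some nonstandard $\fq$-subspace of the same dimension. It is precisely the choice of slope $\lambda^{k_d-1}$ (as opposed to an arbitrary polynomial of degree $k_d-1$) that makes the merged factor standard, so that \Cref{cs:constructionVdV} applies verbatim; throughout one must keep every polynomial degree strictly below $t$, which the hypothesis guarantees.
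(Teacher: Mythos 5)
Your proof is correct and follows essentially the same route as the paper's: both project from a weight-$1$ point supported on coordinates $d-1$ and $d$ and identify the quotient as $JV_{q,n}(\lambda,t;k_0,\dots,k_{d-2},k_{d-1}+k_d-1)$, whose parameters satisfy the hypothesis of \Cref{cs:constructionVdV} precisely because $k_0+k_{d-1}+k_d\le t+2$ (the paper realizes the projection by first applying the shear $v\mapsto v+v_d\lambda^{k_{d-1}-1}\e_{d-1}$ and then quotienting by $E_d$, which is only cosmetically different from your map $\pi$). One small slip: the largest merged parameter need not be $k_0$ (e.g.\ when all $k_i$ are equal one has $k_{d-1}+k_d-1>k_0$), but the two sums you bound are exactly the possible sums of the two largest parameters after reordering, so your verification still goes through.
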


\begin{proof}
Let 
\[
 U'=\{(f_0(\lambda), \ldots, f_{d-1}(\lambda)+\lambda^{k_{d-1}-1}f_{d}(\lambda), f_{d}(\lambda)) \colon f_i \in \fq[X], \, \deg(f_i) < k_i \}.
\]
Then $U'$ is $\mathrm{GL}(d+1,q^n)$-equivalent to $U$ via the $\fqn$-linear map
\[
 \varphi: v = (v_0, \dots, v_d) \mapsto v + v_d \lambda^{k_{d-1}-1} \e_{d-1}.
\]
The point $\langle (0,\ldots,0,-\lambda^{k_{d-1}-1},1) \rangle_{\F_{q^n}}$ has weight $1$ in $L_U$ and it is mapped to point $E_{d}$ by $\varphi$.
So $E_{d}$ has weight $1$ in $L_{U'}$.
We prove that 
$\lvert L_{U'} \rvert=q^{k-1}+\lvert L_{\overline{U}} \rvert$,
where $\overline{U}=U'+E_d \leq_q \F_{q^n}^{d+1}/E_d$.
Note that $\F_{q^n}^{d+1}/E_d$ can be identified with $\F_{q^n}^{d}$ and 
$\overline{U}=U'+E_d \leq_q \F_{q^n}^{d+1}/E_d$ with 
\[
 \overline{U} = JV_{q,n}(\lambda,t;k_0, \dots, k_{d-2}, k_{d-1} + k_d - 1).
\]
By hypothesis $k_{0}+k_{d-1}+k_{d}-1 \leq t+1$, and so $k_0, \dots, k_{d-2}, k_{d-1} + k_d - 1$ indeed satisfy the hypothesis of \Cref{cs:constructionVdV} when rearranged in descending order.
Therefore, $\lvert L_{\overline{U}} \rvert=q^{k-2}+\ldots+q^{k-d}+1$.
Since $\lvert L_{U} \rvert=\lvert L_{U'} \rvert=q^{k-1}+\ldots+q^{k-d}+1=q^{k-1}+ |L_{\overline{U}}|$, we have the assertion.
\end{proof}

The above proposition together with Corollary \ref{cor:maximumsizeminimum}, allows to construct $(1,d)$-minimum size linear sets whose ranks exceed $n+d$. 
\begin{corollary}
There exist $(1,d)$-minimum size $\F_q$-linear sets in $\PG(d,q^n)$, $d \geq 2$, of rank $k$, whenever
\[
 d < k \leq \begin{cases}
  d \frac{n+1}2 +1 & \text{if $n$ is odd}, \\
  d \frac n 2 + 2 & \text{if $n$ is even.}
 \end{cases}
\]
\end{corollary}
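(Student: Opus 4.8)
The plan is to produce the required linear sets as instances of the Jena--Van de Voorde construction of \Cref{cs:constructionVdV} in $\PG(d,q^n)$ whose smallest exponent equals $1$, built by lifting a $(d-1)$-minimum size linear set from one dimension lower. Fix $\lambda \in \fqn$ of degree $t = n$ over $\fq$, and reindex the problem by setting $d' = d-1$ and $k' = k-1$. A direct substitution shows that the bound $d < k \leq d\tfrac{n+1}{2}+1$ (for $n$ odd), respectively $d < k \leq d\tfrac n2 + 2$ (for $n$ even), is equivalent to $d' < k' \leq (d'+1)\tfrac{n+1}{2}$, respectively $d' < k' \leq (d'+1)\tfrac n2 + 1$. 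This is precisely the range in which \Cref{cor:maximumsizeminimum}, applied in $\PG(d-1,q^n)$, guarantees a $(d-1)$-minimum size linear set of rank $k-1$.

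Concretely, I would use \Cref{cs:constructionVdV} to choose integers $k_0 \geq \ldots \geq k_{d-1} \geq 1$ with $k_0 + \ldots + k_{d-1} = k-1$ and $k_0 + k_1 \leq n+1$, for instance by distributing $k-1$ as evenly as possible over $d$ parts. Appending $k_d = 1$, I would then consider
\[
 U = JV_{q,n}(\lambda,n;k_0,\dots,k_{d-1},1),
\]
an $\fq$-linear set of rank $(k-1)+1 = k$ in $\PG(d,q^n)$. It remains only to check the single hypothesis of the preceding proposition, namely $k_0 + k_{d-1} + k_d \leq t+2 = n+2$. Since the exponents are non-increasing we have $k_{d-1} \leq k_1$, and hence
\[
 k_0 + k_{d-1} + k_d = k_0 + k_{d-1} + 1 \leq k_0 + k_1 + 1 \leq (n+1) + 1 = n+2.
\]
The preceding proposition then immediately yields that $L_U$ is a $(1,d)$-minimum size $\fq$-linear set of rank $k$, as desired.

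I expect the only point requiring care to be the feasibility of the exponent choice at the extreme ranks, which is bookkeeping rather than a genuine obstacle. Writing $k-1 = ad + b$ with $0 \leq b < d$, the even split places $b$ parts of size $a+1$ and $d-b$ of size $a$, so its two largest parts sum to at most $2a+2$; the inequality $k-1 \leq d\tfrac{n+1}{2}$ (respectively $d\tfrac n2 + 1$) forces $a$ to be small enough that this sum never exceeds $n+1$, so $k_0 + k_1 \leq n+1$ holds throughout the claimed range. In the two extremal cases one takes all parts equal to $\tfrac{n+1}{2}$ (for $n$ odd), respectively one part equal to $\tfrac n2 + 1$ and the rest equal to $\tfrac n2$ (for $n$ even), which meet the constraint of \Cref{cs:constructionVdV} with equality.
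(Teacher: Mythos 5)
Your proposal is correct and follows exactly the route the paper intends (the paper leaves the proof implicit, stating only that the preceding proposition combined with \Cref{cor:maximumsizeminimum} yields the result): one takes the even split of $k-1$ over the first $d$ exponents as in \Cref{cor:maximumsizeminimum}, appends $k_d=1$, and verifies $k_0+k_{d-1}+k_d\leq n+2$ via $k_{d-1}\leq k_1$ and $k_0+k_1\leq n+1$. Your bookkeeping at the extremal ranks is also accurate, so nothing is missing.
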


\subsection{Generalizing the Caserta construction}

In \cite[Theorem 4.1]{napolitano2022classification}, a construction is given of linear sets on the projective line, based on the more general framework exploited in \cite{Gacs2003onarcs} and \cite{polverino1999blocking}.
In this subsection, we generalize this to higher dimensions.
The construction starts from an $\fq$-linear set $L_{U'}$ in $\PG(d,q^t)$, and yields an $\fq$-linear set in $\PG(d,q^{st})$.
Moreover, the weight distribution of $L_U$ is completely determined by the weight distribution of $L_{U'}$.

\begin{construction}
 \label{th:contructionfromqt}
Suppose that $n =s t$ with $s,t>1$.
Let $U'$ be an $\fq$-subspace of $\F_{q^t}^{d+1} \subseteq \F_{q^n}^{d+1}$ with $\dim_{\fq}(U')=k' > 0$.
Let $Z$ be an $\F_{q^t}$-subspace of $\F_{q^n}$ of dimension $r>0$, such that $1 \notin Z$.
Define 
\[ 
 C_{q,s,t}(Z,U') :=\{(z+ u_0, u_1,\ldots,u_{d}) \colon z \in Z, (u_0,\ldots, u_{d}) \in U'\} \subseteq \F_{q^n}^{d+1},
\]
which we will simply denote by $U$.
Then
\begin{enumerate}[(1)]
 \item the $\F_q$-linear set $L_U \subseteq \PG(d,q^n)$ has rank $r t + k'$,
 \item $|L_U| = q^{r t} |L_{U'} \setminus \{ E_0 \} | + 1$,
 \item $\ww_{L_U}(E_0) = r t + \ww_{L_{U'}}(E_0)$,
 \item $N_i(L_U) = q^{r t} ( N_i(L_{U'}) - \delta_{i,\ww_{L_{U'}}(E_0)} ) +  \delta_{i,\ww_{L_{U}}(E_0)}$,
\end{enumerate}
where $\delta_{i,j}$ denotes the Kronecker symbol.
\end{construction}

\begin{proof}
(1) Since $Z$ is an $\F_{q^t}$-subspace of $\fqn$, and $1 \notin Z$, $Z \cap \F_{q^t} = \{ 0 \}$.
Furthermore, since $U'$ is an $\F_q$-subspace of $\F_{q^t}^{d+1}$, $Z \cap \{u_0 : (u_0, \dots, u_d) \in U' \} = \{ 0 \}$.
Hence,
\[
 U = (Z \times \{ 0 \}^d) \oplus_{\fq} U'.
\]
Therefore,
\[
 \dim_{\fq} U = \dim_{\fq} Z + \dim_{\fq} U' = rt + k'.
\]

(3) Similarly, 
\begin{align*}
 \ww_{L_{U}}(E_0)
 & = \dim_{\fq} \left( Z \oplus_{\fq} \{ u_0 : u_0 \e_0 \in U' \} \right)
 = \dim_{\fq} Z + \dim_{\fq}(\{ u_0 : u_0 \e_0 \in U' \}) \\
 & = r t + \ww_{L_{U'}}(E_0).
\end{align*}

(2,4)
Suppose that
\[
 \langle z \e_0 + u \rangle_{\fqn} = \langle z' \e_0 + v \rangle_{\fqn},
\]
with $z, z' \in Z$, and $u, v \in U' \setminus \langle \e_0 \rangle_{\fqn}$.
Then $z \e_0 + u = \alpha (z' \e_0 + v)$ for some $\alpha \in \fqn$.
Since $u, v$ are not multiples of $\e_0$, there must exist some position $j > 0$ such that $u_j, v_j \neq 0$.
This implies that $\alpha = v_j / u_j \in \F_{q^t}$.
We also have that $z + u_0 = \alpha(z' + v_0)$, hence
\[
 z - \alpha z' = \alpha v_0 - u_0
\]
Recall that $Z$ is an $\F_{q^t}$-subspace, and that $u_0, v_0, \alpha \in \F_{q^t}$.
Therefore, the left-hand side of the above equality is in $Z$, and the right-hand side is in $\F_{q^t}$.
Since $Z \cap \F_{q^t} = \{0\}$, this implies that $z = \alpha z'$ and therefore $u = \alpha v$.

Vice versa, if $z \in Z$, $u \in U' \setminus \langle \e_0 \rangle_{\fqn}$ and $\alpha u \in U'$ for some $\alpha \in \F_{q^t}$, then $\langle z \e_0 + u \rangle_{\fqn} = \langle \alpha z \e_0 + \alpha u \rangle_{\fqn}$.
This proves that
\[
 \ww_{L_U}(\langle z + u \rangle_{\fqn})
 = \dim_{\fq} \{ \alpha \in \F_{q^t} : \alpha u \in U' \}
 = \ww_{L_{U'}}(\langle u \rangle_{\fqn}).
\]
Hence, varying $z$, we see that every point of $L_{U'} \setminus \{ E_0 \}$ gives rise to $|Z|$ points of $L_U \setminus \{ E_0 \}$ of the same weight, and this accounts for all points of $L_U \setminus \{ E_0 \}$.
Points (2) and (4) follow directly from this observation and the fact that $E_0 \in L_U$.
\end{proof}

\begin{remark}
We remark that $L_{U'}$ is contained in $L_U$ and the weight distribution and rank of $L_U$ in the above construction only depends on the weight distribution of $L_{U'}$ and $\ww_{L_{U'}}(E_0)$, but not on the specific structure of $U'$.
In particular, if $\varphi \in \GammaL(d+1,q^t)$, and $\varphi$ fixes $\langle \e_0 \rangle$, then $C_{q,s,t}(Z,U')$ and $C_{q,s,t}(Z,\varphi(U'))$ have the same rank and weight distribution.
\end{remark}

Given some minor conditions, the above construction preserves the property of being $(r,d)$-minimum size.

\begin{proposition}
 \label{prop:liftminconse}
Let $U = C_{q,s,t}(Z,U')$ be as in \Cref{th:contructionfromqt}.
If $L_{U'}$ is an $\fq$-linear set of $(r,d,\Omega)$-minimum size, and $E_0 \in L_{U'} \setminus \Omega$, then $L_U$ is also of $(r,d,\Omega)$-minimum size.
\end{proposition}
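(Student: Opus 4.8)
The plan is to lift the $(r-1)$-space witnessing the $(r,d)$-minimum size of $L_{U'}$ from $\PG(d,q^t)$ up to $\PG(d,q^n)$, and then to recognize the projection of $L_U$ from this lifted space as yet another output of \Cref{th:contructionfromqt}. Write $\Omega = \PG(W',\F_{q^t})$ with $W' \leq \F_{q^t}^{d+1}$, so that $L_{U'}\cap\Omega = L_{U'\cap W'}$ is a canonical subgeometry of $\Omega$ and $\ww_{L_{U'}}(\Omega)=r$. Since $E_0 \in L_{U'}\setminus\Omega$, we have $\e_0 \notin W'$. Set $\widetilde W = \langle W' \rangle_{\fqn}$ and $\widetilde\Omega = \PG(\widetilde W,\fqn)$, an $(r-1)$-space of $\PG(d,q^n)$ containing $\Omega$; this $\widetilde\Omega$ is the candidate witnessing subspace for $L_U$.

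First I would show that $\widetilde\Omega$ meets $L_U$ in a canonical subgeometry by proving $U \cap \widetilde W = U'\cap W'$. The inclusion $\supseteq$ is immediate. For $\subseteq$, recall from the proof of \Cref{th:contructionfromqt} that $U = Z\e_0 \oplus_{\fq} U'$ and $Z \cap \F_{q^t} = \{0\}$. As $\e_0 \notin W'$, choose an $\F_{q^t}$-linear functional $\phi$ on $\F_{q^t}^{d+1}$ with $\phi(\e_0)=1$ and $\phi|_{W'}=0$, and extend it to the $\fqn$-linear functional $\widetilde\phi$ on $\fqn^{d+1}$ with the same coordinate coefficients; then $\widetilde\phi$ vanishes on $\widetilde W$. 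For $v = z\e_0 + u \in U \cap \widetilde W$ with $z \in Z$ and $u \in U'$, applying $\widetilde\phi$ gives $0 = z + \phi(u)$, so $z = -\phi(u) \in \F_{q^t}$; as $z \in Z \cap \F_{q^t} = \{0\}$ we get $z=0$ and $v = u$. A standard rank argument (solvability of an $\F_{q^t}$-system is unchanged over $\fqn$) shows $\F_{q^t}^{d+1} \cap \widetilde W = W'$, hence $v \in U'\cap W'$. Since a canonical subgeometry of $\Omega$ spans it, $\langle U'\cap W'\rangle_{\F_{q^t}} = W'$, whence $\langle U'\cap W'\rangle_{\fqn}=\widetilde W$; therefore $L_U \cap \widetilde\Omega = L_{U'\cap W'}$ is a canonical subgeometry of $\widetilde\Omega$.

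Next I would identify the projection. Let $\overline{U'} = U'+W' \leq \F_{q^t}^{d+1}/W'$ and $\overline U = U + \widetilde W \leq \fqn^{d+1}/\widetilde W$ be the projections of $U'$ from $\Omega$ and of $U$ from $\widetilde\Omega$. Because $\langle \F_{q^t}^{d+1}\rangle_{\fqn}=\fqn^{d+1}$ and $\F_{q^t}^{d+1}\cap\widetilde W = W'$, the quotient $\F_{q^t}^{d+1}/W'$ embeds in $\fqn^{d+1}/\widetilde W$ as an $\F_{q^t}$-form, sending the nonzero class $\overline{\e_0}$ to the special direction. Using $U = Z\e_0 \oplus_{\fq} U'$ one checks that under this identification $\overline U = Z\,\overline{\e_0} + \overline{U'} = C_{q,s,t}(Z,\overline{U'})$ in $\PG(d-r,q^n)$, where $\overline{U'}$ has $\fq$-dimension $k'-r>0$. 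Applying the cardinality formula of \Cref{th:contructionfromqt} to both $U$ and $\overline U$, and using $E_0 \in L_{U'}$ and $\overline{E_0} \in L_{\overline{U'}}$, gives $|L_U| = q^{rt}(|L_{U'}|-1)+1$ and $|L_{\overline U}| = q^{rt}(|L_{\overline{U'}}|-1)+1$. By \Cref{lem:projectionlinear}, $I_{\widetilde\Omega} = |L_{\overline U}|$ and $I_{\Omega}=|L_{\overline{U'}}|$. Substituting the minimum-size identity $|L_{U'}| = q^{k'-1}+\ldots+q^{k'-r}+|L_{\overline{U'}}|$ into the first formula, and using $q^{rt}q^{k'-j}=q^{k-j}$ with $k=rt+k'$ the rank of $L_U$, yields $|L_U| = q^{k-1}+\ldots+q^{k-r}+I_{\widetilde\Omega}$, i.e.\ equality in \Cref{thm:OurBound}. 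The companion inequality $|L_U| \leq q^{k-1}+\ldots+q^{k-d}+1$ follows by multiplying the corresponding bound for $L_{U'}$ by $q^{rt}$, so $L_U$ is of $(r,d,\widetilde\Omega)$-minimum size.

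The main obstacle is the second step: checking that lifting $W'$ to $\widetilde W$ creates no new intersection with $U$, so that the canonical subgeometry survives intact. The crux there is the interaction of $Z \cap \F_{q^t}=\{0\}$ with $\e_0\notin W'$, which the extended functional $\widetilde\phi$ packages cleanly. Once $\overline U$ is recognized as another instance of the construction, the cardinality bookkeeping of the third step is routine.
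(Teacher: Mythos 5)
Your proof is correct and follows essentially the same route as the paper's: both hinge on showing $U\cap W = U'\cap W'$ (via $Z\cap\F_{q^t}=\{0\}$ together with $E_0\notin\Omega$, which you package with a linear functional where the paper simply normalizes $\Omega$ to a coordinate subspace), and on recognizing the projection $\overline U$ as another instance of \Cref{th:contructionfromqt} before doing the same cardinality bookkeeping. The only substantive addition is that you also verify the inequality $|L_U|\leq q^{k-1}+\ldots+q^{k-d}+1$ required by the definition of $(r,d,\Omega)$-minimum size, a step the paper leaves implicit.
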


\begin{proof}
Suppose that $\Omega = \PG(W,\fqn)$, and that the rank of $L_{U'}$ is $k'$.
Since $L_{U'}$ is of $(r,d,\Omega)$-minimum size, $L_{U'}$ meets $\Omega$ in a canonical subgeometry of $\Omega$, and
\[
\lvert L_{U'} \rvert=q^{k'-1}+\ldots+q^{k'-r}+\lvert L_{\overline{U'}} \rvert,
\]
where $\overline{U'}:=U'+W \leq_q \F_{q^n}^{d+1}/W$.
Since $E_0 \notin \Omega$, up to $\mathrm{GL}(d+1,q^n)$-equivalence, we can suppose that $\Omega$ is defined by the equations $X_0=\ldots=X_{d-r}=0$.
Hence $\F_{q^n}^{d+1}/W$ can be identified with $\F_{q^n}^{d-r+1}$ in an obvious way.
Now, an element $z + u \in U$ belongs to $W$ if and only if $z+ u_0 = u_1 = \ldots = u_{d-r}=0$ if and only if $z = u_0 = \ldots = u_{d-r} = 0$.
Therefore, $U \cap W= U' \cap W$ and so $\Omega$ also meets $L_U$ in a canonical subgeometry.
Moreover, by \Cref{th:contructionfromqt},
\[
\overline{U}=U+W=\{ z + \overline u \colon \overline u \in \overline{U'}\} \subseteq \F_{q^n}^{d+1}/W,
\]
has size $q^{r t}(\lvert L_{\overline{U'}} \rvert -1)+1$.
Therefore we have 
\begin{align*}
 |L_U| & = q^{r t}(\lvert L_{U'} \rvert -1)+1
 = q^{r t}( q^{k'-1}+ \ldots + q^{k'-r}+ \lvert L_{\overline{U'}} \rvert - 1 ) + 1 \\
 & = q^{k-1}+\ldots+q^{k-r}+ \lvert L_{\overline{U}} \rvert,
\end{align*}
with $k = rt + k'$ the rank of $L_U$.
\end{proof}

We can apply \Cref{th:contructionfromqt} with $U'$ as in \Cref{cs:constructionVdV}, obtaining the following families of $d$-minimum size linear sets.

\begin{theorem} 
 \label{cor:newminimumsizebyqt}
Consider $U' = JV_{q,t}(\lambda,t';k_0, \dots, k_d)$ where $t' \mid t$ as in \Cref{cs:constructionVdV}, and choose $\varphi \in \GL(d+1,q^t)$ such that $E_0 \in L_{\varphi(U')}$.
Now define $U = C_{q,s,t}(Z,\varphi(U'))$ as in \Cref{th:contructionfromqt}, with $Z$ an $\F_{q^t}$-subspace of rank $r>0$, not containing 1.
Then $L_U$ is a $d$-minimum size $\fq$-linear set of rank $k = rt + k_0 + \ldots + k_d$.
Moreover, the weight spectrum of $L_U$ is
\[
\begin{cases}
\left(1,\ldots,k_1,k_0,r t+\ww_{L_{\varphi(U')}}(E_0) \right) & \mbox{if } \ww_{L_{\varphi(U')}}(E_0) < k_0 \text{ and } k_1 < k_0, \\
\left(1,\ldots,k_1,r t+\ww_{L_{\varphi(U')}}(E_0) \right) & \mbox{otherwise}.
\end{cases}
\]
\end{theorem}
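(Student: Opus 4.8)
The plan is to derive everything from the two constructions on which $U$ is built, treating the rank and size first as bookkeeping and then concentrating on the weight spectrum. By \Cref{th:contructionfromqt}(1), $L_U$ has rank $rt + \dim_{\fq}\varphi(U') = rt + (k_0 + \ldots + k_d) = k$, since $\varphi \in \GL(d+1,q^t)$ preserves $\fq$-dimension. For the size, recall that $L_{U'}$, hence $L_{\varphi(U')}$ by projective equivalence, is of $d$-minimum size by \Cref{cs:constructionVdV}, so $|L_{\varphi(U')}| = q^{k'-1} + \ldots + q^{k'-d} + 1$ with $k' = k_0 + \ldots + k_d$. Substituting this into \Cref{th:contructionfromqt}(2) gives $|L_U| = q^{rt}(|L_{\varphi(U')}| - 1) + 1 = q^{k-1} + \ldots + q^{k-d} + 1$, using $k = rt + k'$; thus $L_U$ is of $d$-minimum size.

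The substance of the theorem is the weight spectrum, and for this I would work entirely from \Cref{th:contructionfromqt}(4). Writing $w = \ww_{L_{\varphi(U')}}(E_0)$, that formula reads $N_i(L_U) = q^{rt}(N_i(L_{\varphi(U')}) - \delta_{i,w}) + \delta_{i,\, rt+w}$, so a weight $i$ occurs in $L_U$ exactly when this quantity is positive. I would split into three ranges. For $i = rt + w$, I first establish $rt + w > k_0$: from $k_0 + k_1 \leq t'+1$ we get $k_0 \leq t'$, and since $t' \mid t$ and $r \geq 1$ we have $t' \leq t \leq rt$, so with $w \geq 1$ indeed $rt + w > k_0$; as $k_0$ is the top weight of $L_{\varphi(U')}$, this yields $N_{rt+w}(L_{\varphi(U')}) = 0$ and hence $N_{rt+w}(L_U) = 1$, so $rt+w$ always occurs. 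For $i \leq k_0$ with $i \neq w$ we have $N_i(L_U) = q^{rt}N_i(L_{\varphi(U')})$, so $i$ occurs in $L_U$ iff it occurs in $L_{\varphi(U')}$. For $i = w$ we have $N_w(L_U) = q^{rt}(N_w(L_{\varphi(U')}) - 1)$, so $w$ survives in $L_U$ precisely when $E_0$ is not the unique point of its weight in $L_{\varphi(U')}$.

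This reduces the problem to the spectrum of $L_{\varphi(U')}$, which equals that of $L_{U'}$ because $\varphi$ is a collineation of $\PG(d,q^t)$, and which is described by \Cref{rk:WeightSpectrumJena}: it is $(1,\ldots,k_0)$ if $k_1 = k_0$, and $(1,\ldots,k_1,k_0)$ with $E_0$ the unique point of weight $k_0$ if $k_1 < k_0$. Matching this against the three ranges gives the two claimed cases. If $k_1 < k_0$ and $w < k_0$, then $w \in \{1,\ldots,k_1\}$, the top weight $k_0 \neq w$ is retained, and $rt+w$ is appended, producing $(1,\ldots,k_1,k_0,rt+w)$. In the remaining case either $k_1 = k_0$, so there is no separate top weight, or $w = k_0$ with $k_1 < k_0$, so the unique weight-$k_0$ point $E_0$ is lifted away and $k_0$ disappears; both situations yield $(1,\ldots,k_1,rt+w)$.

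The main obstacle is the persistence statement for $i = w$: one must show $N_w(L_{\varphi(U')}) \geq 2$ whenever $w < k_0$ (and $N_{k_0} \geq 2$ when $w = k_0 = k_1$), so that lifting the single point $E_0$ does not erase the weight $w$. I would prove this by exhibiting two projectively distinct points of the relevant weight. For $1 \leq w < k_0$ the points $\langle (X^{k_0-w}, 1, 0, \ldots, 0)\rangle$ and $\langle (X^{k_0-w}+1, 1, 0, \ldots, 0)\rangle$, evaluated at $\lambda$, have coprime coordinate polynomials, so \eqref{eq:weightjenavandevoorde} gives each weight $\min(w,k_1) = w$, while $k_0 - w \geq 1$ makes them distinct; for $w = k_0 = k_1$ the coordinate points $E_0$ and $E_1$ both have weight $k_0$. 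The only place where uniqueness rather than multiplicity is needed is $w = k_0$ with $k_1 < k_0$, and there $N_{k_0} = 1$ is exactly the uniqueness assertion of \Cref{rk:WeightSpectrumJena}.
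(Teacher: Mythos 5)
Your proof is correct and follows the same route as the paper, which simply cites \Cref{th:contructionfromqt} and \Cref{rk:WeightSpectrumJena} and leaves the details to the reader. You supply those details carefully, in particular the one point the paper glosses over entirely: that when $\ww_{L_{\varphi(U')}}(E_0)=w<k_0$ (or $w=k_0=k_1$) the weight $w$ is not erased from the spectrum after lifting, which you settle by exhibiting a second point of weight $w$ via \eqref{eq:weightjenavandevoorde}.
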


\begin{proof}
The $\F_q$-linear set $L_{\varphi(U')}$ has the same weight spectrum, weight distribution, and size as $L_{U'}$.
So the assertions follow by applying \Cref{th:contructionfromqt} and \Cref{rk:WeightSpectrumJena}.
\end{proof}

\begin{remark}
 \label{rk:distributionqt}
Using \cite[Remark 2.19]{Jena2021onlinearsets} and \Cref{th:contructionfromqt} (3,4), one could in fact also determine the weight distribution of the linear set in the above theorem.
\end{remark}

The above construction gives new examples of proper $d$-minimum size linear sets. 

\begin{corollary}
In the hypothesis of Theorem \ref{cor:newminimumsizebyqt}, suppose that $L_{U'}$ is a $(d,d,\Pi)$-minimum size $\fq$-linear set, with $\Pi=\PG(W,\F_{q^n})$.
Suppose that $E_0 \in L_{\varphi(U')} \setminus \tilde{\Pi}$, with $\tilde{\Pi} = \PG(\varphi(W),\F_{q^n})$.
Then $L_U$ is a proper $d$-minimum size $\F_q$-linear set in $\PG(d,q^n)$. 
\end{corollary}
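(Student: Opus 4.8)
The plan is to obtain this as a direct consequence of \Cref{prop:liftminconse} in the case $r=d$, since by definition a $(d,d,\Omega)$-minimum size linear set is exactly one of proper $d$-minimum size. Concretely, \Cref{prop:liftminconse} says that if the input to the lifting construction $C_{q,s,t}(Z,\cdot)$ is of $(d,d,\Omega)$-minimum size and does not contain $E_0$ in $\Omega$, then the output inherits this property. The subspace we actually feed to the construction is $\varphi(U')$, and the hypothesis already guarantees $E_0 \in L_{\varphi(U')} \setminus \tilde{\Pi}$. Hence the only point that needs checking is that $L_{\varphi(U')}$ itself is of $(d,d,\tilde{\Pi})$-minimum size; the claimed conclusion then follows at once, and in particular \Cref{cor:newminimumsizebyqt} already records that $L_U$ has the correct rank and is $d$-minimum size.

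I would establish the transfer by observing that $\varphi$ acts as a projectivity of $\PG(d,q^n)$. Its matrix has entries in $\F_{q^t} \subseteq \fqn$, so the same matrix defines an element of $\GL(d+1,q^n)$ agreeing with $\varphi$ on $\F_{q^t}^{d+1}$; note also that $\varphi(U') \subseteq \F_{q^t}^{d+1}$, so that $C_{q,s,t}(Z,\varphi(U'))$ is well-defined as in \Cref{th:contructionfromqt}. This induced projectivity maps the $\fq$-linear set $L_{U'}$ onto $L_{\varphi(U')}$, and a projectivity preserves rank, size, and the weight of every subspace while sending $\fq$-subgeometries to $\fq$-subgeometries. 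In particular it carries the hyperplane $\Pi=\PG(W,\fqn)$ to the hyperplane $\tilde{\Pi}=\PG(\varphi(W),\fqn)$ and the canonical subgeometry $L_{U'}\cap\Pi$ to $L_{\varphi(U')}\cap\tilde{\Pi}=L_{\varphi(U'\cap W)}$, which is therefore again canonical. Since $L_{U'}$ is of $(d,d,\Pi)$-minimum size by hypothesis, it follows that $L_{\varphi(U')}$ is of $(d,d,\tilde{\Pi})$-minimum size, with the same rank $k'$ and the same size $q^{k'-1}+\ldots+q^{k'-d}+1$.

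With this in hand I would invoke \Cref{prop:liftminconse}, taking $\varphi(U')$ in the role of the input subspace and $\tilde{\Pi}$ in the role of $\Omega$, and using the hypothesis $E_0 \in L_{\varphi(U')} \setminus \tilde{\Pi}$. This immediately gives that $L_U = C_{q,s,t}(Z,\varphi(U'))$ is of $(d,d,\tilde{\Pi})$-minimum size, i.e.\ of proper $d$-minimum size, as claimed. The only genuinely nontrivial step, and hence the one I would state with care, is the projectivity-invariance of the previous paragraph: one must record explicitly that the matrix of $\varphi$, having entries in $\F_{q^t} \subseteq \fqn$, defines an $\fqn$-projectivity, and that such a map preserves both the numerical minimum-size condition (through rank and size) and the geometric condition of meeting a hyperplane in a canonical $\fq$-subgeometry. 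Everything else is bookkeeping already handled by \Cref{cor:newminimumsizebyqt} and \Cref{prop:liftminconse}.
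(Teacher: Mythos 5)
Your proposal is correct and follows essentially the same route as the paper: transfer the $(d,d,\Pi)$-minimum size property from $L_{U'}$ to $L_{\varphi(U')}$ via the projectivity induced by $\varphi \in \GL(d+1,q^t) \subseteq \GL(d+1,q^n)$, then apply \Cref{prop:liftminconse} using the hypothesis $E_0 \in L_{\varphi(U')} \setminus \tilde{\Pi}$. You spell out the projectivity-invariance step more explicitly than the paper does, but the argument is the same.
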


\begin{proof}
The linear set $L_{U'}$ is of $(d,d,\Pi)$-minimum size, so the hyperplane $\Pi=\PG(W,\F_{q^n})$ of $\PG(d,q^n)$ meets $L_{U'}$ in a canonical subgeometry of $\Pi$ and
\[
\lvert L_{U'} \rvert=q^{m-1}+\ldots+q^{m-d}+1.
\]
It follows that $\tilde{\Pi}$ also meets $L_{\varphi(U')}$ in a canonical subgeometry of $\tilde{\Pi}$, that is $L_{\varphi(U')}$ is of proper $d$-minimum size as well.
The assertion follows by Proposition \ref{prop:liftminconse}. 
\end{proof}

\Cref{cs:constructionVdV} provides constructions of $d$-minimum size linear sets admitting points of complementary weights. Using \Cref{cor:newminimumsizebyqt}, it is possible to construct proper $d$-minimum size linear sets that do not have this property, as we will see in the next example.
This proves that in general a $d$-minimum size linear set need not contain independent points whose weights sum to the rank of the linear set.
So in general, as already observed in \cite{napolitano2022classification} for the projective line, being minimum size does not determine the weight spectrum and distribution of a linear set.

\begin{example}
Consider 
\[
 U'=JV_{q,6}(\lambda,6;2,2,2)
\]
as in \Cref{cs:constructionVdV}.
Then $L_{U'}$ is an $\F_q$-linear set of rank $6$ in $\PG(2,q^6)$ having size $q^5+q^4+1$ and points of weight at most $2$. Moreover, $\ww_{L_{U'}}(E_0)+\ww_{L_{U'}}(E_1)+\ww_{L_{U'}}(E_2)=2+2+2=6$ is equal to the rank of $L_{U'}$. 
Define
\[
 \varphi \in \GL(3,q^6): (x,y,z) \mapsto (x,y-\lambda x,z).
\]
Then the $\F_q$-linear set $L_{U''}$ in $\PG(2,q^6)$, with
\[
 U''=\varphi(U') 
 = \{ (\alpha_0+\alpha_1 \lambda, \beta_0+\beta_1 \lambda-\alpha_1 \lambda^2, \gamma_0+\gamma_1 \lambda) \colon \alpha_i,\beta_i,\gamma_i \in \F_q \} \subseteq \F_{q^6}^3 
\]
has the same rank, weight spectrum and weight distribution as $L_{U'}$.
Note that $\ww_{L_{U''}}(E_0)=1$.
Choose a 1-dimensional $\F_{q^6}$-subspace $Z \neq \F_{q^6}$ of $\F_{q^{12}}$.
By \Cref{cor:newminimumsizebyqt}, the $\F_q$-linear set $L_U$ of $\PG(2,q^{12})$, with 
\[
U= C_{q,2,6}(Z,U'')
\]
has rank $12$ and size $q^{11}+q^{10}+1$. So, it is a $2$-minimum size linear set.
Note that the weight spectrum of $L_U$ is $(1,2,7)$, and so there do not exist three points of complementary weights.
In particular, $L_U$ cannot be obtained from \Cref{cs:constructionVdV}.
\end{example}

In some cases, \Cref{cor:newminimumsizebyqt} gives us linear sets admitting points of complementary weights, but with a different weight distribution than those of \Cref{cs:constructionVdV}, as stated in the following theorem.

\begin{theorem} \label{th:relationweight}
Consider
\[
 U' = JV_{q,t}(\lambda,t;k_0,\dots,k_d)
\]
and let $\varphi_i$ be the linear map swapping coordinates 0 and $i \in \{0,\dots,d\}$ (with  $\varphi_0 = \mathrm{id}$).
Consider
\[
 U = C_{q,s,t}(Z,\varphi_i(U')),
\]
\begin{enumerate}[(1)]
 \item If $k_i = k_0$, then there exists an $\F_q$-linear set obtained from \Cref{cs:constructionVdV} with the same weight distribution as $L_U$.
 \item If $k_i < k_0-1$, then there does not exist an $\F_q$-linear set obtained from \Cref{cs:constructionVdV} with the same weight distribution as $L_U$.
\end{enumerate}
\end{theorem}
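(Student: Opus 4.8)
The theorem compares the weight distribution of $L_U = L_{C_{q,s,t}(Z,\varphi_i(U'))}$ with the weight distributions achievable by the Jena--Van de Voorde construction. The strategy is to compute the weight distribution of $L_U$ explicitly using \Cref{th:contructionfromqt}(4), which reduces everything to the weight distribution of $L_{\varphi_i(U')}$, and then compare against the known weight spectra of \Cref{cs:constructionVdV} as recorded in \Cref{rk:WeightSpectrumJena}. The crucial quantity throughout is $\ww_{L_{\varphi_i(U')}}(E_0)$: since $\varphi_i$ swaps coordinates $0$ and $i$, we have $\ww_{L_{\varphi_i(U')}}(E_0) = \ww_{L_{U'}}(E_i) = k_i$, because $E_i$ is the point of the Cartesian product whose only nonzero coordinate is the $i$-th, and that coordinate block has weight $k_i$. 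This identifies the exceptional weight introduced by the lift as $rt + k_i$.

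\textbf{Part (1): the case $k_i = k_0$.}
When $k_i = k_0$, the weight spectrum of $L_{\varphi_i(U')}$ is $(1,\dots,k_0)$ (same as $L_{U'}$, since permuting coordinates preserves the weight distribution), and $\ww_{L_{\varphi_i(U')}}(E_0) = k_0$ is the maximum weight. By \Cref{th:contructionfromqt}(4), $L_U$ has weight distribution obtained by multiplying each $N_j(L_{\varphi_i(U')})$ by $q^{rt}$, subtracting the single point $E_0$ from its old weight class $k_0$ and reinserting it at weight $rt + k_0$. The plan is to show that the resulting distribution coincides with that of a direct Jena--Van de Voorde linear set. The natural candidate is $JV_{q,n}(\lambda,t'; k_0+r\cdot(\text{something}),\dots)$, or more precisely one where the first block is enlarged so that $E_0$ acquires weight $rt+k_0$ while all other weights are unchanged; I would verify via the formula \eqref{eq:weightjenavandevoorde} that raising $k_0$ to $k_0 + rt$ (keeping $n = st$ large enough, and using that $\gcd$-one representatives behave as required) reproduces exactly the multiplied-and-shifted distribution. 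The verification is a matching of the two explicit distributions term by term.

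\textbf{Part (2): the case $k_i < k_0 - 1$.}
Here $\ww_{L_{\varphi_i(U')}}(E_0) = k_i < k_0 - 1$, so $E_0$ is \emph{not} of maximal weight in $L_{\varphi_i(U')}$. After lifting, $E_0$ is moved to weight $rt + k_i$, which now exceeds $k_0$ (as $rt \geq t > 1$ and $k_i \geq 1$), so $L_U$ has a \emph{unique} point of maximal weight $rt+k_i$, and its second-largest weight is $k_0$, with the two separated by a gap: there is no point of weight strictly between $k_0$ and $rt+k_i$. The plan is to derive a contradiction from assuming a direct construction $JV_{q,n}(\lambda,t'';m_0,\dots,m_d)$ reproduces this distribution. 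By \Cref{rk:WeightSpectrumJena}, a Jena--Van de Voorde linear set with a unique point of top weight $m_0$ and second weight $m_1 < m_0$ must have $N_{m_1}$ given by the formula $q^{m_0-m_1+1}\frac{q^{\ell}-1}{q-1}$ for the appropriate multiplicity $\ell$; I would compare this forced value of $N_{k_0}(L_U)$ (second-largest weight) against the value dictated by \Cref{th:contructionfromqt}(4), namely $q^{rt}N_{k_0}(L_{\varphi_i(U')})$, and show the two cannot be reconciled — the discrepancy being controlled precisely by the gap $rt+k_i - k_0 \geq 2$, which forces a count of the form $q^{(rt+k_i)-k_0+1}(\cdots)$ incompatible with a power $q^{rt}$ times the Jena second-weight count.

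\textbf{Main obstacle.}
The hard part will be Part (2): the argument is not a single clean inequality but a rigidity statement saying \emph{no} parameter choice in \Cref{cs:constructionVdV} matches the distribution of $L_U$. The delicate point is that \Cref{cs:constructionVdV} only constrains the \emph{top two} weights and their distribution via \Cref{rk:WeightSpectrumJena}; one must argue that matching $N_{k_0}(L_U)$ and the top weight simultaneously is impossible, using the fact that in any direct construction the second-largest weight count carries a factor $q^{(\text{gap})+1}$ tied to the degree freedom in the top block, whereas the lifted construction produces a factor $q^{rt}$ decoupled from the gap $rt+k_i-k_0$. Making this incompatibility fully rigorous — rather than merely plausible — is the crux.
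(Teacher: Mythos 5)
Your proposal is correct and follows essentially the same route as the paper: for (1) the paper takes exactly your candidate, namely $JV_{q,n}(\mu,n;k_0+rt,k_1,\dots,k_d)$ with $\mu$ of degree $n$, and for (2) it compares $N_{k_0}(L_U)=q^{rt}\tfrac{q^m-1}{q-1}$ with the forced value $q^{rt+k_i-k_0+1}\tfrac{q^{m'}-1}{q-1}$ from \Cref{rk:WeightSpectrumJena}. The ``crux'' you flag in (2) resolves in one line: since $\tfrac{q^m-1}{q-1}$ and $\tfrac{q^{m'}-1}{q-1}$ are coprime with $q$, equating the two counts forces $k_i=k_0-1$, contradicting $k_i<k_0-1$.
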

\begin{proof}
Write $n = st$.

(1) If $k_i = k_0$, choose a primitive element $\mu$ of $\fqn$.
Consider $U_2 = JV_{q,n}(\mu,n;k_0+rt,k_1,\dots,k_d)$.
Then $L_U$ and $L_{U_2}$ have the same weight distribution by \Cref{rk:WeightSpectrumJena} (see also \cite[Remark 2.19]{Jena2021onlinearsets}) and \Cref{th:contructionfromqt} (3,4).

(2) Now assume that $k_i < k_0-1$, and suppose that there exists some $\fq$-subspace
\[
 U_3 = JV_{q,n}(\mu,t';k_0',\dots,k_d') \subseteq \F_{q^n}^{d+1}
\]
such that $L_U$ and $L_{U_3}$ have the same weight distribution.
Since $L_U$ has a unique point of weight $r t + k_i$ by \Cref{th:contructionfromqt} (3,4), we see that by \Cref{rk:WeightSpectrumJena}, $k_0' = r t + k_i$.
Furthermore, the second largest weight of $L_U$ and $L_{U_3}$ is respectively $k_0$ and $k_1'$, hence $k_0 = k_1'$.
Let $m'$ denote the number of indices $j$ with $k_j' = k_0$, i.e.\ $k_1' = \ldots = k_{m'}' > k_{m'+1}'$.
Then, using \Cref{rk:WeightSpectrumJena} (see also \cite[Remark 2.19]{Jena2021onlinearsets}),
\[
 N_{k_0}(L_{U_3}) = q^{k_0' - k_1' + 1}\frac{q^{m'}-1}{q-1} = q^{rt + k_i - k_0 + 1} \frac{q^{m'}-1}{q-1}.
\]
On the other hand, by \Cref{th:contructionfromqt} (4) and \Cref{rk:WeightSpectrumJena} (see also \cite[Remark 2.19]{Jena2021onlinearsets}),
\[
 N_{k_0}(L_U) = q^{rt} N_{k_0} (L_{U'}) = q^{rt} \frac{q^m-1}{q-1},
\]
with $m$ the number indices $j$ with $k_j = k_0$.
Therefore,
\[
 q^{rt + k_i - k_0 + 1} \frac{q^{m'}-1}{q-1} = q^{rt} \frac{q^m-1}{q-1}.
\]
Since $\frac{q^m-1}{q-1}$ and $\frac{q^{m'}-1}{q-1}$ are coprime with $q$, this implies that $k_i = k_0 - 1$.
\end{proof}

The case $k_i = k_0 - 1$ is a bit more complicated, and we will not discuss it here.

\subsection{Regarding equivalence}

We show that the two different types of $\fq$-subspaces that define the $d$-minimum size linear sets defined in \Cref{cs:constructionVdV} and \Cref{cor:newminimumsizebyqt} are $\Gamma\mathrm{L}(d+1,q^n)$-inequivalent, even if the associated linear sets have the same weight spectrum and distribution (see \Cref{th:relationweight} (1)), when the dimension of $Z$ is the maximum possible.
The trace function $\mathrm{Tr}_{q^n/q}$ of $\fqn$ over $\fq$, defines a non-degenerate symmetric bilinear form as follows:
\[
(a,b) \in \F_{q^n} \times \F_{q^n} \mapsto \mathrm{Tr}_{q^n/q}(ab) \in \F_q.
\]

Hence, for any subset $S$ of $\fqn$ we can define the orthogonal complement as
\[ 
 S^\perp=\{a \in \fqn \colon \mathrm{Tr}_{q^n/q}(ab)=0,\,\,\,\forall b \in S \}. 
\]
Note that if $S$ is an $\F_{q^t}$-subspace of $\F_{q^n}$, then $S^{\perp}$ is an $\F_{q^t}$-subspace as well.

Given an ordered $\F_{q}$-basis $\mathcal{B}=(\xi_0,\ldots,\xi_{n-1})$ of $\F_{q^n}$, there exists a unique ordered $\fq$-basis $\mathcal{B}^*=(\xi_0^*,\ldots,\xi_{n-1}^*)$ of $\F_{q^n}$ such that $\mathrm{Tr}_{q^n/q}(\xi_i \xi_j^*)= \delta_{ij}$, for $i,j\in\{0,\ldots,n-1\}$, called the \emph{dual basis} of $\mathcal B$, see e.g.\ \cite[Definition 2.30]{lidl1997finite}. 

\begin{lemma}[{\cite[Corollary 2.7]{napolitano2022linearsets}}]
\label{cor:dualbasis}
Let $\lambda \in \fqn$ and suppose that $\mathcal{B}=(1,\lambda,\ldots,\lambda^{n-1})$ is an ordered $\fq$-basis of $\fqn$.
Let $f(x)=a_0+a_1x+\ldots+a_{n-1}x^{n-1}+x^n$ be the minimal polynomial of $\lambda$ over $\fq$. 
Then the dual basis $\mathcal{B}^*$ of $\mathcal{B}$ is 
\[ \mathcal{B}^*=(\delta^{-1}\gamma_0,\ldots,\delta^{-1}\gamma_{n-1}), \]
where $\delta=f'(\lambda)$ and $\gamma_i=\sum_{j=1}^{n-i} \lambda^{j-1}a_{i+j}$, for every $i \in \{0,\ldots,n-1\}$. 
\end{lemma}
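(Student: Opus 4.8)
The plan is to verify directly that the proposed tuple $\mathcal{B}^* = (\delta^{-1}\gamma_0,\ldots,\delta^{-1}\gamma_{n-1})$ satisfies the defining relations of the dual basis, namely $\mathrm{Tr}_{q^n/q}(\lambda^i\cdot\delta^{-1}\gamma_j) = \delta_{ij}$ for all $i,j\in\{0,\ldots,n-1\}$; since the dual basis of $\mathcal B$ is unique, this is enough. The first step is to recognize the $\gamma_i$ as the coefficients of the polynomial division of $f$ by $x-\lambda$. Writing $f(x)=\sum_{k=0}^n a_kx^k$ with $a_n=1$ and using $f(\lambda)=0$, one has
\[
 \frac{f(x)}{x-\lambda} = \frac{f(x)-f(\lambda)}{x-\lambda} = \sum_{k=1}^n a_k\sum_{j=0}^{k-1}\lambda^{\,k-1-j}x^{\,j},
\]
and collecting the coefficient of $x^i$ recovers exactly $\gamma_i=\sum_{j=1}^{n-i}\lambda^{\,j-1}a_{i+j}$. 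Hence $\sum_{i=0}^{n-1}\delta^{-1}\gamma_i\,x^i = \frac{f(x)}{(x-\lambda)f'(\lambda)}$, which I recognize as a Lagrange basis polynomial.

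Next I would pass to the conjugates of $\lambda$. Because $\mathcal{B}$ is a basis, $\lambda$ generates $\fqn$ over $\fq$, so $f$ is irreducible of degree $n$ and therefore separable; its roots $\lambda_m:=\lambda^{q^m}$, $m\in\{0,\ldots,n-1\}$, are distinct, and $\delta=f'(\lambda)=\prod_{l\neq 0}(\lambda-\lambda_l)\neq 0$, so $\delta^{-1}$ exists. Viewing $\gamma_i=\sum_{j=1}^{n-i}\lambda^{j-1}a_{i+j}$ as a polynomial expression in $\lambda$ with coefficients $a_{i+j}\in\fq$, and using that Frobenius fixes $\fq$, the map $\alpha\mapsto\alpha^{q^m}$ sends $\gamma_i$ to $\gamma_i(\lambda_m)$, which is precisely the coefficient of $x^i$ in $f(x)/(x-\lambda_m)$. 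Writing the trace as a sum over the Frobenius orbit, $\mathrm{Tr}_{q^n/q}(\alpha)=\sum_{m=0}^{n-1}\alpha^{q^m}$, I then obtain
\[
 \mathrm{Tr}_{q^n/q}\!\left(\lambda^i\,\delta^{-1}\gamma_j\right) = \sum_{m=0}^{n-1}\lambda_m^{\,i}\cdot\frac{\gamma_j(\lambda_m)}{f'(\lambda_m)} = \sum_{m=0}^{n-1}\lambda_m^{\,i}\,[x^j]\,L_m(x),
\]
where $L_m(x)=\frac{\prod_{l\neq m}(x-\lambda_l)}{f'(\lambda_m)}$ is the $m$th Lagrange basis polynomial on the nodes $\lambda_0,\ldots,\lambda_{n-1}$ (using $f'(\lambda_m)=\prod_{l\neq m}(\lambda_m-\lambda_l)$), and $[x^j]$ denotes coefficient extraction.

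The final step is Lagrange interpolation. For $0\le i\le n-1$ the polynomial $x^i$ has degree at most $n-1$ and agrees with the data $\lambda_m\mapsto\lambda_m^{\,i}$ at all $n$ distinct nodes, so uniqueness of interpolation gives $\sum_{m=0}^{n-1}\lambda_m^{\,i}L_m(x)=x^i$. Extracting the coefficient of $x^j$ yields $\mathrm{Tr}_{q^n/q}(\lambda^i\,\delta^{-1}\gamma_j)=[x^j]\,x^i=\delta_{ij}$, which is exactly the dual-basis relation. I expect the only delicate point to be the bookkeeping that identifies $\gamma_j(\lambda_m)/f'(\lambda_m)$ with the coefficient of $x^j$ in the Lagrange polynomial $L_m$: once the $\gamma_i$ are realized as the coefficients of $f(x)/(x-\lambda)$ and the Frobenius action is tracked through the $\fq$-rationality of $f$, the statement reduces to the classical interpolation identity and no further computation is needed.
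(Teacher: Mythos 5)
The paper does not prove this lemma at all: it is quoted verbatim as \cite[Corollary 2.7]{napolitano2022linearsets}, so there is no in-paper argument to compare against. Your proof is correct and self-contained. The two key identifications — that $\sum_{i=0}^{n-1}\gamma_i x^i = f(x)/(x-\lambda)$ (verified by expanding $(f(x)-f(\lambda))/(x-\lambda)$ and reading off the coefficient of $x^i$), and that applying Frobenius to the $\fq$-coefficient expression $\gamma_j$ turns $\delta^{-1}\gamma_j$ into the coefficient vector of the Lagrange basis polynomial at the conjugate node $\lambda^{q^m}$ — are both sound, the separability of $f$ guarantees $\delta\neq 0$ and distinct nodes, and the final step $\sum_m \lambda_m^i L_m(x)=x^i$ is the standard uniqueness-of-interpolation argument, giving $\mathrm{Tr}_{q^n/q}(\lambda^i\,\delta^{-1}\gamma_j)=\delta_{ij}$ as required; uniqueness of the dual basis (non-degeneracy of the trace form) closes the argument. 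This is the classical route to the dual of a polynomial basis, and it is a perfectly adequate replacement for the external citation.
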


\begin{theorem} \label{th:equivalenceminimumsize}
Suppose that $n =(s+1) t$, with $s,t>1$. Consider $U' = JV_{q,t}(\mu,t;k_0, \dots, k_d)$ as in \Cref{cs:constructionVdV}, with $k_0<t-1$. Let $\varphi_i$ be the linear map swapping coordinates 0 and $i \in \{0,\dots,d\}$ (with  $\varphi_0 = \mathrm{id}$) and
define
\[
 U_1 = C_{q,s,t}(Z,\varphi_i(U')),
\]
 as in \Cref{th:contructionfromqt},
with $Z$ an $\F_{q^t}$-subspace of dimension $s$, not containing 1. 
Consider $U_2 = JV_{q,n}(\lambda,n;h_0, k_1, \dots, k_d)$ as in \Cref{cs:constructionVdV}, 
with $h_0=st+k_i$. 
Then the $\fq$-subspaces $U_1$ are $U_2$ are $\Gamma\mathrm{L}(d+1,q^n)$-inequivalent.
\end{theorem}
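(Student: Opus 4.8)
The plan is to exhibit a $\GammaL(d+1,q^n)$-invariant of an $\fq$-subspace $U \le \fqn^{d+1}$ that takes different values on $U_1$ and $U_2$. The invariant I would use is
\[
 \tau(U) = \max\{\dim_{\fq} M : M \text{ an } \F_{q^t}\text{-subspace of } \fqn^{d+1},\ M \subseteq U\},
\]
the largest $\fq$-dimension of an $\F_{q^t}$-subspace contained in $U$. First I would check that $\tau$ is genuinely $\GammaL(d+1,q^n)$-invariant: if $\psi=(A,\sigma)\in\GammaL(d+1,q^n)$ and $M$ is an $\F_{q^t}$-subspace, then for $\beta\in\F_{q^t}$ and $w\in M$ we have $\psi(\beta w)=\beta^\sigma\psi(w)$ with $\beta^\sigma\in\F_{q^t}$ (since $\sigma$ fixes the subfield $\F_{q^t}$ setwise), so $\psi(M)$ is again an $\F_{q^t}$-subspace of the same $\fq$-dimension. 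Hence $\tau(\psi(U))=\tau(U)$, and it suffices to show $\tau(U_1)\neq\tau(U_2)$.

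For the lower bound on $U_1$, since $\mathbf 0\in\varphi_i(U')$ the subspace $Z\times\{0\}^d$ is contained in $U_1$, and because $Z$ is an $\F_{q^t}$-subspace of $\fqn$ of $\F_{q^t}$-dimension $s$, this exhibits an $\F_{q^t}$-subspace of $U_1$ of $\fq$-dimension $st$; thus $\tau(U_1)\ge st$. For $U_2=JV_{q,n}(\lambda,n;h_0,k_1,\dots,k_d)$ with $h_0=st+k_i$, I would show $\tau(U_2)<st$. Writing $U_2=W_0\times\cdots\times W_d$ with $W_0=\langle 1,\dots,\lambda^{h_0-1}\rangle_{\fq}$ and $W_j=\langle 1,\dots,\lambda^{k_j-1}\rangle_{\fq}$, the coordinate projections are $\F_{q^t}$-linear, and for $j\ge 1$ the image space $W_j$ has $\fq$-dimension $k_j\le k_0\le t-2<t$, hence contains no nonzero $\F_{q^t}$-subspace. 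Therefore every $\F_{q^t}$-subspace $M\subseteq U_2$ must lie in $W_0\times\{0\}^d$, which reduces the whole problem to a single coordinate: I must prove that $A:=\langle 1,\lambda,\dots,\lambda^{h_0-1}\rangle_{\fq}$ contains no $\F_{q^t}$-subspace of $\fq$-dimension $st$.

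The heart of the argument, and the step I expect to be the main obstacle, is exactly this last claim. Since $\F_{q^t}$-subspaces of $\fqn$ have $\fq$-dimension divisible by $t$ and $\dim_{\fq}A=h_0=st+k_i<(s+1)t=n$, the only candidate dimension $\ge st$ is $st$ itself, so $M$ would be an $\F_{q^t}$-hyperplane $M=\{x:\Tr_{q^n/q^t}(\omega x)=0\}$ for some $\omega\in\fqn^*$. Passing to orthogonal complements with respect to the nondegenerate form $\Tr_{q^n/q}(xy)$, one computes $M^\perp=\omega\F_{q^t}$, so $M\subseteq A$ is equivalent to $A^\perp\subseteq\omega\F_{q^t}$. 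Applying \Cref{cor:dualbasis} with $f$ the minimal polynomial of $\lambda$ and $\delta=f'(\lambda)$, a triangularity computation on the dual of $(1,\lambda,\dots,\lambda^{n-1})$ yields $A^\perp=\delta^{-1}\langle 1,\lambda,\dots,\lambda^{t-1-k_i}\rangle_{\fq}$, where $t-1-k_i\ge 1$ because $k_i\le k_0\le t-2$. In particular $A^\perp$ contains $\delta^{-1}$ and $\delta^{-1}\lambda$, whose quotient $\lambda$ does not lie in $\F_{q^t}$, whereas any two nonzero elements of an $\F_{q^t}$-line $\omega\F_{q^t}$ have quotient in $\F_{q^t}$. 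Hence $A^\perp\not\subseteq\omega\F_{q^t}$ for every $\omega$, no such $M$ exists, and $\tau(U_2)\le(s-1)t<st$. Combined with $\tau(U_1)\ge st$ this gives $\tau(U_1)\neq\tau(U_2)$, so $U_1$ and $U_2$ are $\GammaL(d+1,q^n)$-inequivalent. The delicate points to nail down are the dual-basis identification of $A^\perp$ and the projection argument forcing $M$ into a single coordinate.
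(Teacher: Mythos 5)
Your proof is correct, but it is organized quite differently from the paper's. The paper splits into cases: for $k_i<k_0-1$ it invokes \Cref{th:relationweight} to say $L_{U_1}$ and $L_{U_2}$ already have different weight distributions, and for $k_i\in\{k_0-1,k_0\}$ it uses that $E_0$ is the unique point of maximal weight $h_0$ in both linear sets, so a hypothetical equivalence must carry $U_1\cap\langle \e_0\rangle_{\fqn}$ onto $U_2\cap\langle \e_0\rangle_{\fqn}$; this yields $aZ^{\rho}\subseteq\langle 1,\lambda,\dots,\lambda^{h_0-1}\rangle_{\fq}$, and the contradiction is extracted by noting that $(aZ^\rho)^\perp$ is a one-dimensional $\F_{q^t}$-space containing $\lambda_{n-2}^*=\delta^{-1}(a_{n-1}+\lambda)$ and $\lambda_{n-1}^*=\delta^{-1}$, whose ratio is not in $\F_{q^t}$. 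You instead package everything into the single $\GammaL(d+1,q^n)$-invariant $\tau$, localize to the zeroth coordinate by the elementary projection argument ($k_j\le k_0<t$ for $j\ge 1$ kills any $\F_{q^t}$-structure in the other coordinates, rather than appealing to uniqueness of the maximal-weight point), and then run essentially the same trace-duality computation: your identification $A^\perp=\delta^{-1}\langle 1,\lambda,\dots,\lambda^{t-1-k_i}\rangle_{\fq}$ via the triangularity of the $\gamma_j$ in \Cref{cor:dualbasis} is correct, and exhibiting $\delta^{-1},\delta^{-1}\lambda\in A^\perp$ with ratio $\lambda\notin\F_{q^t}$ is the same endgame as the paper's. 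What your route buys is uniformity and independence: you need neither the case distinction nor \Cref{th:relationweight}, and you prove the slightly stronger statement that $U_2$ contains no $\F_{q^t}$-subspace of $\fq$-dimension $st$ anywhere, while $U_1$ visibly contains $Z\times\{0\}^d$. What the paper's route buys is the extra information, in the case $k_i<k_0-1$, that the linear sets themselves (not merely the underlying subspaces) are distinguished by their weight distributions. All the steps you flagged as delicate check out: $\tau$ is indeed $\GammaL$-invariant since every automorphism of $\fqn$ stabilizes $\F_{q^t}$, the only admissible dimension $\ge st$ is $st$ itself because $h_0=st+k_i<(s+1)t$, and the hypothesis $k_0<t-1$ guarantees $t-1-k_i\ge 1$ so that $\delta^{-1}\lambda$ really lies in $A^\perp$.
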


\begin{proof}
Suppose that $k_i<k_0-1$. Then, by \Cref{th:relationweight},
$L_{U_1}$ and $L_{U_2}$ have a distinct weight distribution, hence $U_1$ and $U_2$ cannot be $\Gamma\mathrm{L}(d+1,q^n)$-equivalent. So suppose that $k_i \in \{k_0-1,k_0\}$ and suppose by contradiction that $U_1$ and $U_2$ are $\Gamma\mathrm{L}(d+1,q^n)$-equivalent via an element $\varphi$.
Since $h_0>k_i$, for every $i \in \{1,\ldots,d\}$, 
the point $E_0$ is the only point in $L_{U_1}$ and in $L_{U_2}$ of weight $h_0$. So, we have that $\varphi(U_1 \cap E_0)=U_2\cap E_0$, that is
\[ aS_1^{\rho}=S_2, \]
some $a \in \fqn^*$ and $\rho \in \mathrm{Aut}(\fqn)$, with $S_1=Z \oplus \langle 1,\mu,\ldots,\mu^{k_i-1} \rangle_{\F_q}$ and $S_2=\langle 1,\lambda,\ldots,\lambda^{h_0-1} \rangle_{\F_q}$.
In particular, we have that $aZ^\rho \subseteq S_2$ and so $(aZ^\rho)^\perp \supseteq S_2^\perp$. Note that $\dim_{\F_{q^{t}}}(aZ^{\rho})=\dim_{\F_{q^{t}}}(Z)=s$. 
This implies that $\dim_{\fq}((aZ^\rho)^\perp)=n-st=t$ and 
hence $(aZ^\rho)^\perp$ is an $\F_{q^{t}}$-subspace of $\fqn$ of dimension one.
Consider the ordered $\F_q$ basis $\mathcal{B}=(1,\lambda,\ldots,\lambda^{n-1})$ of $\F_{q^n}$ and its dual basis $\mathcal{B}^*=(\lambda_0^*,\ldots,\lambda_{n-1}^*)$.
So we have that $S_2^\perp=\langle \lambda_{h_0}^*,\ldots,\lambda_{n-1}^* \rangle_{\fq}$ and since $k_0<t-1$, we have that $h_0<n-1$. By Lemma \ref{cor:dualbasis} it follows that
\[ \lambda_{n-2}^*=\delta^{-1}(a_{n-1}+\lambda), \]
and
\[ \lambda_{n-1}^*=\delta^{-1}, \]
where $f(x)=a_0+a_1x+\ldots+a_{n-1}x^{n-1}+x^n$ is the minimal polynomial of $\lambda$ over $\fq$ and $\delta=f'(\lambda)$.
Now, since $\lambda_{n-2}^*,\lambda_{n-1}^* \in (aZ^\rho)^\perp$ and since $(aZ^\rho)^\perp$ has dimension one over $\F_{q^{t}}$, it follows
\[ \frac{\lambda_{n-2}^*}{\lambda_{n-1}^*}=a_{n-1}+\lambda \in \F_{q^{t}}, \]
that is $\lambda \in \F_{q^{t}}$, a contradiction.
\end{proof}

\section{Below the De Beule-Van de Voorde bound}
 \label{sec:BelowBound}

In this section, we will provide constructions of linear sets $L_U$ in $ \PG(d,q^n)$, with $d>2$, that are of $(r,d)$-minimum size but not of $d$-minimum size.
They have maximum geometric field of linearity $\F_q$, and admit two subspaces of complementary weights.

For our aims, we will suppose that one of these subspaces intersects $L_U$ in a linear set with greater field of linearity.
This gives us the following constructions.

\begin{theorem}
 \label{th:contructionfromqt2}
Let $n=st$, with $s,t>1$, and suppose that
\begin{itemize}
 \item $U_1$ is a $k_1$-dimensional $\F_{q^t}$-subspace of $\fqn^{d_1+1}$,
 \item $U_2$ is a $k_2$-dimensional $\fq$-subspace of $\F_{q^t}^{d_2+1} \subseteq \F_{q^n}^{d_2+1}$.
\end{itemize}
Define $U = U_1 \times U_2$, and $d = d_1 + d_2 + 1$.
Then $L_U$ is an $\fq$-linear set of $\PG(d,q^n)$ of rank $k_1 t + k_2$, with
\[
 |L_U| = |L_{U_1}| + q^{k_1 t} |L_{U_2}|.
\]
Moreover, its weight distribution satisfies
\[
 N_i(L_U) = N_i(L_{U_1}) + q^{k_1 t} N_i(L_{U_2}).
\]
\end{theorem}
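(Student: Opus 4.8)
The plan is to analyze the structure $U = U_1 \times U_2 \leq \F_{q^n}^{d+1}$ directly, exploiting the fact that the two factors live in complementary coordinate blocks. I would set up coordinates so that $U_1$ occupies the first $d_1+1$ positions and $U_2$ the last $d_2+1$ positions, so that $\F_{q^n}^{d+1} = V_1 \oplus V_2$ where $V_1 = \langle \e_0, \dots, \e_{d_1} \rangle_{\fqn}$ and $V_2 = \langle \e_{d_1+1}, \dots, \e_d \rangle_{\fqn}$. The rank claim is immediate: $\dim_{\fq} U = \dim_{\fq} U_1 + \dim_{\fq} U_2$, and since $U_1$ is a $k_1$-dimensional $\F_{q^t}$-space we have $\dim_{\fq} U_1 = k_1 t$, while $\dim_{\fq} U_2 = k_2$, giving rank $k_1 t + k_2$.

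For the size and weight distribution, the key observation is that every nonzero vector of $U$ is either of the form $(u_1, 0)$ with $u_1 \in U_1 \setminus \{0\}$, or of the form $(u_1, u_2)$ with $u_2 \in U_2 \setminus \{0\}$. The points coming from the first type are exactly the points of $L_{U_1}$, viewed as lying in $\PG(V_1, \fqn)$, and their weights in $L_U$ coincide with their weights in $L_{U_1}$; this follows because any scalar $\alpha \in \fqn$ with $\alpha(u_1,0) \in U$ forces $\alpha u_1 \in U_1$. The subtle point is the second type: I would fix a nonzero $u_2 \in U_2$ and count the scalars $\alpha \in \fqn$ and vectors of $U$ of the form $(v_1, \alpha u_2)$ that represent the same projective point. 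Here the crucial structural fact is that $U_1$ is an $\F_{q^t}$-subspace whereas $U_2$ sits inside $\F_{q^t}^{d_2+1}$ and is only an $\fq$-subspace. I expect this asymmetry to be the main obstacle, and the heart of the argument.

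Concretely, I would argue that if $\langle (v_1, u_2) \rangle_{\fqn} = \langle (v_1', u_2') \rangle_{\fqn}$ with $u_2, u_2' \in U_2 \setminus \{0\}$, then the proportionality scalar $\alpha$ satisfies $\alpha u_2' = u_2$ with both in $U_2 \subseteq \F_{q^t}^{d_2+1}$, which pins $\alpha$ down to lie in $\F_{q^t}$ (by looking at a coordinate where $u_2'$ is nonzero, $\alpha$ is a ratio of two elements of $\F_{q^t}$). Once $\alpha \in \F_{q^t}$, the condition $\alpha v_1' = v_1$ is automatic within the $\F_{q^t}$-space $U_1$, so the weight of such a point, namely $\dim_{\fq}\{\alpha \in \fqn : \alpha(v_1,u_2) \in U\}$, equals $\dim_{\fq}\{\alpha \in \F_{q^t} : \alpha u_2 \in U_2\} = \ww_{L_{U_2}}(\langle u_2 \rangle_{\fqn})$. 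Thus each point of $L_{U_2} \subseteq \PG(V_2,\fqn)$ lifts to exactly $|U_1| = q^{k_1 t}$ projective points of $L_U$ of the same weight (as $v_1$ ranges over $U_1$), all distinct from each other and from the points of $L_{U_1}$.

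Summing the two contributions yields $|L_U| = |L_{U_1}| + q^{k_1 t}|L_{U_2}|$ and, refining by weight, $N_i(L_U) = N_i(L_{U_1}) + q^{k_1 t} N_i(L_{U_2})$ for each $i$. I would take care to verify that the two families of points are genuinely disjoint as point sets: a point with a nonzero $V_2$-component cannot coincide with one lying entirely in $V_1$, since the $\fqn$-span of a representative detects which block is nonzero. This disjointness, together with the per-point multiplicity computation above, gives the stated formulas directly, so no further global counting is needed.
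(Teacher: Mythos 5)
Your proposal is correct and follows essentially the same route as the paper: the same case split on whether the $U_2$-component of a representative vector is zero, the same key observation that a nonzero $U_2$-component forces the proportionality scalar into $\F_{q^t}$ (after which membership in the $\F_{q^t}$-space $U_1$ is automatic), and the same conclusion that each point of $L_{U_2}$ lifts to $q^{k_1 t}$ points of $L_U$ of equal weight while $L_{U_1}$ embeds with weights preserved. No gaps; the extra disjointness check you flag is the same implicit bookkeeping the paper performs.
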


\begin{proof}
Take a vector $u \in U_1$ and $v \in U_2$ with $(u,v) \neq \mathbf 0$.
Then
\[
 \ww_{L_U}(\langle (u,v) \rangle_{\fqn})
 = \dim_{\fq} \{ \alpha \in \fqn : \alpha (u,v) \in U \}.
\]
Evidently, $\alpha (u,v) \in U$ if and only if $\alpha u \in U_1$ and $\alpha v \in U_2$.
If $v \neq \mathbf 0$, then $\alpha v \in U_2$ implies that $\alpha \in \F_{q^t}$, and since $U_1$ is an $\F_{q^t}$-subspace, $\alpha u$ is automatically in $U_1$.
Therefore, every point $\langle v \rangle_{\fqn}$ of $L_{U_2}$ gives rise to the $q^{k_1 t}$ points $\{ \langle (u,v) \rangle_{\fqn} : u \in U_1 \}$ of $L_{U}$ with the same weight.
If $v = \mathbf 0$, then we just need that $\alpha u \in U_1$, hence in this way, every point of $L_{U_1}$ gives rise to one point of $L_U$ of the same weight.
Since this accounts for all points of $L_U$, the statement of the theorem follows.
\end{proof}


Using the above theorem, we are able to obtain constructions of linear sets in $\PG(d,q^n)$, with $d \geq 3$, having maximum geometric field of linearity $\F_q$ that are $(r,d)$-minimum size with $2 \leq r<d$ and that are not $d$-minimum size.

\begin{theorem} 
 \label{th:constructionnotdminimum}
Let $n=st$, with $s,t>1$, and suppose that
\begin{itemize}
 \item $U_1$ is a $k_1$-dimensional $\F_{q^t}$-subspace of $\fqn^{d_1+1}$, with $k_1 \leq d_1s$,
 \item $U_2$ is a $k_2$-dimensional $\fq$-subspace of $\F_{q^t}^{d_2+1}$, such that $L_{U_2}$ is a proper $d_2$-minimum size $\fq$-linear set.
\end{itemize}
Define $U = U_1 \times U_2$, $d = d_1 + d_2 + 1$, and $k = k_1 t + k_2$.
Then $L_U$ is a $(d_2,d)$-minimum size $\fq$-linear set of size
\[
 |L_U| = q^{k-1} + q^{k-2} + \ldots + q^{k-d_2} + q^{k_1 t} + |L_{U_1}|.
\]
Hence, $L_U$ is not $d$-minimum size if $k_2 \geq d_2+2$.
Furthermore, if $d_2 \geq 2$, then $\fq$ is the maximum geometric field of linearity of $L_U$.
\end{theorem}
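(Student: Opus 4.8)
Write $\PG(d,q^n)=\PG(V_1\oplus V_2,\fqn)$ with $V_1=\fqn^{d_1+1}$ and $V_2=\fqn^{d_2+1}$, so that $U=U_1\times U_2$ with $U_1\subseteq V_1$ and $U_2\subseteq V_2$. The plan is first to read off the size of $L_U$ directly from \Cref{th:contructionfromqt2}. Since $L_{U_2}$ is of proper $d_2$-minimum size, $|L_{U_2}|=q^{k_2-1}+\ldots+q^{k_2-d_2}+1$, and substituting this into $|L_U|=|L_{U_1}|+q^{k_1t}|L_{U_2}|$ together with $k=k_1t+k_2$ yields the claimed expression $q^{k-1}+\ldots+q^{k-d_2}+q^{k_1t}+|L_{U_1}|$.

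Next I would exhibit the subspace witnessing $(d_2,d)$-minimum size. As $L_{U_2}$ is of proper $d_2$-minimum size, some hyperplane $\Pi_2=\PG(W,\fqn)$ of $\PG(V_2,\fqn)$ meets $L_{U_2}$ in a canonical subgeometry, where $W\subseteq V_2$ and $\dim_\fq(U_2\cap W)=d_2$. Viewing $\Omega:=\Pi_2$ as a $(d_2-1)$-space of $\PG(d,q^n)$, the observation that $L_U\cap\PG(V_2,\fqn)=L_{U_2}$ shows $L_U\cap\Omega=L_{U_2}\cap\Pi_2$ is a canonical subgeometry of $\Omega$; since $d_2<k$, the pair $(L_U,\Omega)$ satisfies the hypotheses of \Cref{thm:OurBound} with $r=d_2$.

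The heart of the argument is to pin down $I_\Omega$. By \Cref{prop:boundquotient}, $I_\Omega=|L_{\overline U}|$ for $\overline U=(U+W)/W$. Since $W\subseteq V_2$, one identifies $V/W=V_1\oplus(V_2/W)$ and $\overline U=U_1\times A$, where $A$ is the image of $U_2$ in the one-dimensional $\fqn$-space $V_2/W$, of $\fq$-dimension $k_2-d_2$. The points of $L_{\overline U}$ with zero $V_2/W$-component are exactly those of $L_{U_1}$, giving $|L_{U_1}|$ of them; let $M$ count the remaining points, those with nonzero $V_2/W$-component. Each such point has weight at most $\dim_\fq A=k_2-d_2$, since for a representative $(u,a)$ with $a\neq0$ its weight is $\dim_\fq\{\alpha\in\fqn:\alpha u\in U_1,\ \alpha a\in A\}\le\dim_\fq\{\alpha:\alpha a\in A\}=k_2-d_2$. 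These points partition the $q^{k_1t}(q^{k_2-d_2}-1)$ vectors of $\overline U$ with nonzero last component, so $q^{k_1t}(q^{k_2-d_2}-1)\le M\,(q^{k_2-d_2}-1)$, whence $M\ge q^{k_1t}$ and $I_\Omega=M+|L_{U_1}|\ge q^{k_1t}+|L_{U_1}|$. Combining with \Cref{thm:OurBound}, I would then squeeze: $|L_U|\ge q^{k-1}+\ldots+q^{k-d_2}+I_\Omega\ge q^{k-1}+\ldots+q^{k-d_2}+q^{k_1t}+|L_{U_1}|$, and the right-hand side equals $|L_U|$ by the size computation, forcing equality everywhere and in particular $|L_U|=q^{k-1}+\ldots+q^{k-d_2}+I_\Omega$. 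I expect this weight bound feeding the squeeze to be the main obstacle, precisely because it pins $I_\Omega$ down without computing $|L_{\overline U}|$ head-on.

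It remains to verify the inequality $|L_U|\le q^{k-1}+\ldots+q^{k-d}+1$ demanded by the definition, which reduces to $q^{k_1t}+|L_{U_1}|\le q^{k-d_2-1}+\ldots+q^{k-d}+1$. Here I would use that $L_{U_1}$, being $\F_{q^t}$-linear of rank $k_1$, satisfies $|L_{U_1}|\le\frac{q^{k_1t}-1}{q^t-1}<q^{k_1t-1}$ (invoking $t\ge2$), together with $k_2\ge d_2+1$ and $d_1\ge1$; comparing leading powers then settles it as a routine estimate. For the penultimate claim, when $k_2\ge d_2+2$ the leading term $q^{k-d_2-1}\ge q^{k_1t+1}$ already exceeds $q^{k_1t}+|L_{U_1}|$, so the inequality is strict and $L_U$ is not of $d$-minimum size. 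Finally, since $L_U$ is of $(d_2,d)$-minimum size and $d_2\ge2$, \Cref{rk:geometricfieldminimum} immediately gives that $\fq$ is the maximum geometric field of linearity of $L_U$.
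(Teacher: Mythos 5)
Your proposal is correct and follows essentially the same route as the paper: the same product decomposition, the same choice of $\Omega$ inside the $U_2$-factor, and the same use of \Cref{th:contructionfromqt2} for the size computation and the final comparison with $q^{k-d_2-1}+\ldots+q^{k-d}+1$. The only divergence is in pinning down $I_\Omega$: where you lower-bound it via the weight cap $k_2-d_2$ and a vector count, then squeeze against the known size using \Cref{thm:OurBound}, the paper simply applies \Cref{th:contructionfromqt2} a second time to $\overline U = U_1\times U'$ (with $U'$ the $(k_2-d_2)$-dimensional image of $U_2$ in $V_2/W$) to get $I_\Omega=|L_{\overline U}|=q^{k_1t}+|L_{U_1}|$ exactly — a shortcut available to you as well, since $L_{U'}$ is a single point of $\PG(0,q^n)$.
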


\begin{proof}
The $\F_q$-linear set $L_{U_2} \subseteq \PG(d_2,q^n)$ is of proper $d_2$-minimum size, and so its size is 
\[
 |L_{U_2}| = q^{k_2-1}+\ldots+q^{k_2-d_2}+1.
\]
By Theorem \ref{th:contructionfromqt2}, $L_U$ has rank $k=k_1 t + k_2$ and size 
\[
 \lvert L_{U_1} \rvert + q^{k_1 t}(q^{k_2-1}+\ldots+q^{k_2-d_2}+1)
 = \lvert L_{U_1} \rvert + q^{k-1}+q^{k-2}+\ldots+q^{k-d_2}+q^{k_1 t}.
\]
Moreover there exists a $(d_2-1)$-space $\Gamma=\PG(W,\F_{q^n})$ of $\PG(d_2,q^n)$, with $W \subseteq \F_{q^n}^{d_2+1}$, meeting $L_{U_2}$ in a canonical subgeometry.
Now, let $W'=\{0\}^{d_1+1} \times W$.
Then $W'$ defines a $d_2$-space of $\PG(d,q^n)$ meeting $L_U$ in a canonical subgeometry.
Identifying $\F_{q^n}^{d+1}/W$ with $\F_{q^n}^{d_1+1}$ we have that $\overline{U}=U_1+W \leq_q \F_{q^n}^{d+1}/W$ with
\[
\overline{U}=U_1 \times U',
\]
where $U'$ is an $\F_q$-subspace of $\F_{q^n}$ of dimension $k_2-d_2$.
So again, by Theorem \ref{th:contructionfromqt2}, we have $\lvert L_{\overline{U}}\rvert=q^{k_1 t}+\lvert L_{U_1}\rvert$.
Moreover, by \eqref{eq:card}, $|L_{U_1}| \leq q^{(k_1-1)t} + \ldots +q^t+1$, and since $k_2>d_2+1$ it follows that
\[
 |L_{U_1}| < q^{k-d_2-1} + \ldots + q^{k-d} + 1 - q^{k_1 t}.
\]
This implies that $L_U$ is not of $d$-minimum size.
Finally, the assertion on the geometric field of linearity follows from Remark \ref{rk:geometricfieldminimum}.
\end{proof}

By the above corollary and Proposition \ref{prop:properminimumjena}, we get the following construction.

\begin{corollary}
 \label{cor:underboundwithjena}
Let $n=st$, with $s,t>1$, and suppose that
\begin{itemize}
 \item $U_1 = JV_{q^t,n}(\lambda,n;l_0,\dots,l_{d_1})$, and denote $k_1 = l_0 + \ldots + l_{d_1}$,
 \item $U_2 = JV_{q,t}(\mu,t;m_0,\dots,m_{d_2})$, and denote $k_2 = m_0 + \ldots + m_{d_2}$,
\end{itemize}
with $L_{U_2}$ satisfying the condition of \Cref{prop:properminimumjena}.
Define $U = U_1 \times U_2$.
Then $L_U$ is a $(d_2,d)$-minimum size $\fq$-linear set, but not of $d$-minimum size.
Moreover, if $d_2 \geq 2$, then $\fq$ is the maximum geometric field of linearity of $L_U$.
\end{corollary}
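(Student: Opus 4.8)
The plan is to obtain this corollary as an essentially immediate consequence of \Cref{th:constructionnotdminimum}, so the real work is to confirm that the two subspaces $U_1 = JV_{q^t,n}(\lambda,n;l_0,\dots,l_{d_1})$ and $U_2 = JV_{q,t}(\mu,t;m_0,\dots,m_{d_2})$ meet its hypotheses. First I would pin down the two-layered field setup. Since $\mu$ has degree $t$ over $\fq$ we have $\mu \in \F_{q^t}$, so all its powers lie in $\F_{q^t}$ and $U_2$ is a $k_2$-dimensional $\fq$-subspace of $\F_{q^t}^{d_2+1}$, with $k_2 = m_0 + \ldots + m_{d_2}$. On the other side, $\lambda$ is taken to generate the degree-$s$ extension $\fqn/\F_{q^t}$, so each factor $\langle 1,\lambda,\dots,\lambda^{l_i-1}\rangle_{\F_{q^t}}$ is an $\F_{q^t}$-subspace and $U_1$ is a $k_1$-dimensional $\F_{q^t}$-subspace of $\fqn^{d_1+1}$ with $k_1 = l_0 + \ldots + l_{d_1} = \dim_{\F_{q^t}} U_1$.

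Next I would check that $L_{U_2}$ is of proper $d_2$-minimum size, which is exactly the conclusion of \Cref{prop:properminimumjena}: its hypothesis -- pairwise coprime $g_0,\dots,g_{d_2} \in \fq[X]$ with $\deg g_i = m_i - 1$, together with $m_0 + \ldots + m_{d_2} \le t + d_2$ -- is assumed in the statement. This supplies the second bullet of \Cref{th:constructionnotdminimum}. For its first bullet I would verify the numerical side condition $k_1 \le d_1 s$ on the parameters $l_0 \ge \dots \ge l_{d_1}$; this does not follow automatically from the admissibility inequality of \Cref{cs:constructionVdV} and must hold for the chosen JV parameters.

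With both bullets in place, \Cref{th:constructionnotdminimum} applied to $U = U_1 \times U_2$, with $d = d_1 + d_2 + 1$ and rank $k = k_1 t + k_2$, directly gives that $L_U$ is a $(d_2,d)$-minimum size $\fq$-linear set, and that $\fq$ is its maximum geometric field of linearity as soon as $d_2 \ge 2$ (via \Cref{rk:geometricfieldminimum}, since $r = d_2 \ge 2$). The assertion that $L_U$ is \emph{not} of $d$-minimum size is the clause of \Cref{th:constructionnotdminimum} requiring $k_2 \ge d_2 + 2$; here I would observe that $k_2 = m_0 + \ldots + m_{d_2} \ge d_2 + 1$ always, with equality exactly when every $m_i = 1$, i.e.\ when $L_{U_2}$ degenerates to a canonical subgeometry, so outside that degenerate case the bound $k_2 \ge d_2 + 2$ holds and the conclusion follows.

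I expect the only real obstacle to be bookkeeping rather than any deep step: keeping the nested extensions $\fq \subseteq \F_{q^t} \subseteq \fqn$ straight so that the ``outer'' JV subspace $U_1$ lives in the correct $\F_{q^t}$-structure of $\fqn^{d_1+1}$, and cleanly isolating the two side conditions $k_1 \le d_1 s$ and $k_2 \ge d_2 + 2$ under which the two invoked results apply, rather than any genuinely new computation.
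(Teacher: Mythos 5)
Your overall route is the paper's: both proofs reduce the corollary to \Cref{th:constructionnotdminimum}. The gap is in how you obtain the clause ``not of $d$-minimum size''. You invoke the theorem's condition $k_2 \geq d_2+2$ and explicitly set aside the case $k_2 = d_2+1$ (all $m_i=1$) as degenerate. But that case is not excluded by the corollary's hypotheses: constant polynomials $g_0,\dots,g_{d_2}$ are pairwise coprime, $\deg g_i = m_i-1=0$, and $d_2+1 \le t+d_2$, so $U_2 = JV_{q,t}(\mu,t;1,\dots,1)$ does satisfy the condition of \Cref{prop:properminimumjena}, and the corollary still asserts the conclusion there. As written, your argument therefore does not prove the stated result.

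The paper closes this uniformly by not falling back on the generic bound \eqref{eq:card} at all: since $U_1$ is itself a Jena--Van de Voorde subspace over $\F_{q^t}$, $L_{U_1}$ is of $d_1$-minimum size as an $\F_{q^t}$-linear set, so $|L_{U_1}| = q^{(k_1-1)t}+\ldots+q^{(k_1-d_1)t}+1$ exactly. One then checks directly that this is strictly less than $q^{k-d_2-1}+\ldots+q^{k-d}+1-q^{k_1 t}$, which is the inequality actually used at the end of the proof of \Cref{th:constructionnotdminimum} to rule out $d$-minimum size; this holds even for $k_2=d_2+1$, since then the right-hand side equals $q^{k_1t-1}+\ldots+q^{k_1t-d_1}+1$ and $(k_1-i)t = k_1t-it < k_1t-i$ for $i\ge 1$ and $t\ge 2$. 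Your remark about the side condition $k_1 \le d_1 s$ is fair --- it does not follow from the JV admissibility constraint $l_0+l_1\le s+1$ (take $d_1=1$ and $l_0+l_1=s+1$) --- but the paper does not verify it either, and it plays no role in the size computation that drives the conclusion; the substantive issue to repair is the $k_2=d_2+1$ case above.
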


\begin{proof}
Note that
\[
 \lvert L_{U_1} \rvert = q^{(k_1-1) t}+\ldots+q^{(k_1-d_1)t}+1
 < q^{k-d_2-1}+\ldots+q^{k-d}+1 - q^{k_1 t},
\]
and then the assertion follows by Theorem \ref{th:constructionnotdminimum}.
\end{proof}

\begin{remark}
Other examples of $(d_2,d)$-minimum size linear set can be obtained by using the minimum size linear sets constructed in Corollary \ref{cor:newminimumsizebyqt} as $L_{U_1}$ or $L_{U_2}$ in Theorem \ref{th:constructionnotdminimum} .
\end{remark}

\begin{remark}
 It is natural to consider $\PG(d,q^n)$, $n$ not prime, and wonder what the maximal value of $d_2$ is such that the above corollary implies the existence of an $\fq$-linear set in $\PG(d,q^n)$ that is of $(d_2,d)$-minimum size, but not of $d$-minimum size, and has maximum geometric field of linearity $\fq$.
 So let $t$ be the largest proper divisor of $n$.
 Note that $t \geq \sqrt n$.
 We want to construct a set $U_2 = JV_{q,t}(\mu,t;m_0,\dots,m_{d_2})$ with $d_2$ maximal, such that it satisfies the conditions of \Cref{prop:properminimumjena}.
 Hence, there must exist pairwise coprime polynomials $g_i$ of degree $m_i-1$ such that $(m_0-1) + \ldots + (m_{d_2}-1) \leq t-1$.
 Let $\delta(x)$ denote the maximum number of distinct monic irreducible polynomials over $\fq$ such that the sum of their degrees is smaller than $x$.
 Then for any $m \geq 1$, $\delta(q^m) \geq \frac{q^m-1}m$.
 Indeed, consider the minimal polynomials of the elements of $\F_{q^m}^*$.
 Since every element of $\F_{q^m}^*$ is the root of a unique such polynomial, their degrees sum to $q^m-1$.
 Furthermore, the maximum degree equals $m$, so there are at least $\frac{q^m-1}m$ such polynomials.
 Hence, to answer the original question, asymptotically, $d_2 = \Omega(t/\log_q(t)) = \Omega(\sqrt n / \log_q(n))$.
\end{remark}

We conclude this subsection with examples of linear sets of $(1,2)$-minimum size that are not of $(2,2)$-minimum size and have maximum geometric field of linearity $\F_q$.
\begin{proposition} \label{prop:belowdebeuleplane}
   Let $n=st$, with $s > 1$, and $t>2$ prime. Suppose that the smallest prime that divides $s$ is at least $t$. Let
   \begin{itemize}
 \item $U_1$ be a $k_1$-dimensional $\F_{q^t}$-subspace of $\F_{q^n}$,
 \item $U_2 = JV_{q,t}(\mu,t;m_0,m_1)$, with $t = m_0 + m_1$.
\end{itemize}
Define $U = U_1 \times U_2$.
Then $L_U$ is a $(1,2)$-minimum size $\fq$-linear set, but not of $2$-minimum size. Moreover, $\fq$ is the maximum geometric field of linearity of $L_U$.
\end{proposition}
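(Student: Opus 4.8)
The plan is to read the size and the $(1,2)$-minimum property off the product structure, and then to spend the real effort on the geometric field of linearity, which is the only assertion not covered by \Cref{th:constructionnotdminimum}: there one needs $r=d_2\ge 2$ to invoke \Cref{rk:geometricfieldminimum}, whereas here $r=d_2=1$, and this is precisely where the arithmetic hypotheses on $s$ and $t$ must be used.

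First I would record the numerics. Choosing coordinates so that $U_1$ sits on $\e_0$ and $U_2$ lies on the line $\Omega_2=\{X_0=0\}$, \Cref{th:contructionfromqt2} together with \Cref{cs:constructionVdV} gives that $L_U$ has rank $k=k_1t+t$ and
\[
 |L_U|=|L_{U_1}|+q^{k_1t}|L_{U_2}|=1+q^{k_1t}(q^{t-1}+1)=q^{k-1}+q^{k-t}+1,
\]
since $L_{U_1}$ is the single point $E_0$. This is the degenerate $d_1=0$, $d_2=1$ case of \Cref{th:constructionnotdminimum}; although the hypothesis $k_1\le d_1s$ there is vacuously violated, the computation goes through verbatim with $|L_{U_1}|=1$. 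Projecting from a weight-one point $\Omega$ of $L_{U_2}$ on $\Omega_2$ yields $\overline U=U_1\times U'$ with $\dim_{\fq}U'=t-1$, whence $|L_{\overline U}|=q^{k-t}+1$; so $L_U$ is of $(1,2,\Omega)$-minimum size, and $q^{k-t}+1<q^{k-2}+1$ forces $|L_U|<q^{k-1}+q^{k-2}+1$, i.e.\ $L_U$ is not of $2$-minimum size. Both strict inequalities use $t>2$.

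The hard part is the maximum geometric field of linearity, and the obstacle is that \Cref{rk:secantgeometricfield} is not directly available: $L_U$ has \emph{no} $(q+1)$-secant line. The key geometric fact I would establish is a weight dichotomy. A line carrying at least two points of $L_U$ either passes through $E_0$ (and then has weight at least $k_1t+1$), or joins two points lying over distinct points $Q_1\ne Q_2$ of $L_{U_2}$ (and then avoids $E_0$). In the second case, because $U_1$ is an $\F_{q^t}$-subspace, a short dimension count shows the line has weight exactly $t$: if $v_1,v_2\in U_2\subseteq\F_{q^t}^2$ represent $Q_1,Q_2$, then $\alpha v_1+\beta v_2\in U_2\subseteq\F_{q^t}^2$ already forces $\alpha,\beta\in\F_{q^t}$, after which the $\e_0$-component $\alpha u_1+\beta u_2$ automatically lands in $U_1$. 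Hence the intersection is $\fq$-isomorphic to $U_2$, and the line meets $L_U$ in exactly $q^{t-1}+1$ points; such a line exists since $|L_{U_2}|=q^{t-1}+1\ge 2$. This explains both why no short secant exists (every secant has weight $\ge t$) and supplies the line I need for the mod argument.

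Finally I would combine this with the arithmetic hypotheses. If $L_U$ had geometric field of linearity $\F_{q^m}$ with $m>1$, then choosing a prime $p\mid m$ it would also be $\F_{q^p}$-linear, with $p\mid m\mid n$. But $n=st$ with $t$ prime and every prime divisor of $s$ at least $t$, so every prime divisor of $n$ is $\ge t$; in particular $p\ge t>t-1$. The restriction of an $\F_{q^p}$-linear set to a line is an $\F_{q^p}$-linear set on that line, so it meets every secant line in a number of points $\equiv 1\pmod{q^p}$ (the $\F_{q^p}$-analogue of \eqref{eq:modqsize}). Yet the weight-$t$ line above meets $L_U$ in $q^{t-1}+1\not\equiv 1\pmod{q^p}$ points, a contradiction. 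Hence no such $p$ exists, and the maximum geometric field of linearity is $\fq$. The whole argument hinges on the weight dichotomy and the exact count $q^{t-1}+1$ on weight-$t$ lines; everything downstream is the mod-$q^p$ bookkeeping that makes the conditions "$t$ prime" and "smallest prime dividing $s$ is at least $t$" exactly what is needed.
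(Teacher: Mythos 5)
Your proof is correct and follows essentially the same route as the paper: the size and the $(1,2)$-minimality come from the product structure of \Cref{th:contructionfromqt2} together with projection from a weight-one point of $L_{U_2}$, and the field-of-linearity claim is settled by intersecting with a line on which $L_U$ induces a copy of $L_{U_2}$ of size $q^{t-1}+1$, combined with the arithmetic hypotheses forcing any geometric field of linearity $\F_{q^r}\subseteq\F_{q^n}$ with $r>1$ to have $r\ge t$. The only cosmetic differences are that the paper works directly with the single line $X_0=0$ rather than your dichotomy over all secant lines, and concludes from the bound $\lvert \ell\cap L_W\rvert\ge q^r+1$ rather than your congruence modulo $q^p$.
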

\begin{proof}
By Theorem \ref{th:contructionfromqt2}, $L_U$ is an $\F_q$-linear set of $\PG(2,q^n)$ of rank $(k_1+1)t$ having size 
\[
\lvert L_U \rvert= q^{(k_1+1)t-1}+q^{k_1t}+1,
\]
that is not of $2$-minimum size. Since $L_{U_2}$ has a point of weight $1$, there exists $\varphi \in \mathrm{GL}(2,q^t)$, such that the $\F_q$-linear set $L_{U'}$, with $U'=U_1 \times \varphi(U_2)$ has $E_2$ as a point of weight $1$.
Hence $\F_{q^n}^{3}/E_2$ can be identified with $\F_{q^n}^{2}$ in an obvious way. Clearly, $L_U$ and $L_{U'}$ are $\mathrm{GL}(3,q^n)$-equivalent. In this way, $U'/E_2$ can be identified as an $\F_q$-subspace $\overline{U}=U_1 \times U_2'$, where $U_2'$ is an $(t-1)$-dimensional $\F_q$-subspace of $\F_{q^t}$. Again, by Theorem \ref{th:contructionfromqt2}, we have that $\lvert L_{\overline{U}} \rvert=q^{k_1t}+1$ and hence $L_U$ is a $(1,2)$-minimum size $\F_q$-linear set. 
Suppose now, that $L_U=L_{W}$ for some $\F_{q^r}$-linear set $L_W$. 
If $r < t$, then by our hypothesis, $r$ is coprime with $s$ and $t$, hence $r$ is coprime with $n = st$, and $\F_{q^r}$ is not a subfield of $\fqn$.
Therefore, $r \geq t$.
Let $\ell$ be the line of $\PG(2,q^n)$ having equation $X_0=0$. Then 
    \[
    q^{t-1}+1=\lvert L_{U_2} \rvert = \lvert \ell \cap L_{U}\rvert = \lvert \ell \cap L_{W}\rvert.
    \]
    Since $\ell \cap L_{W}$ is an $\F_{q^r}$-linear set we have that $\lvert \ell \cap L_{W} \rvert \geq q^r+1$. So $t-1 \geq r$, a contradiction.
\end{proof}

\begin{remark}
    Choosing $s,t$ be prime numbers with $s>t>2$, Proposition \ref{prop:belowdebeuleplane} gives examples of $\F_q$-linear sets in $\PG(2,q^{st})$ having rank $st$ and with size $q^{st}+q^{st-t}+1$. Moreover, for such a linear set, the maximum geometric field of linearity if $\F_q$ and there cannot exist a line meeting it in a subline, since its size is less than $q^{st}+q^{st-1}+1$.
\end{remark}

\section*{Acknowledgment}
We would like to thank Jan De Beule, Olga Polverino and Ferdinando Zullo for fruitful
discussions.
Paolo Santonastaso is very grateful for the hospitality of the
Department of Mathematics and Data Science, Vrije Universiteit Brussel, Brussels, Belgium, where he was a visiting PhD student for 2 months during the preparation of this paper.
Paolo Santonastaso was supported by the project ``VALERE: VAnviteLli pEr la RicErca" of the University of Campania ``Luigi Vanvitelli'' and by the Italian National Group for Algebraic and Geometric Structures and their Applications (GNSAGA - INdAM).

\bigskip

\filbreak

\noindent Sam Adriaensen \\
\textit{Vrije Universiteit Brussel} \\
Department of Mathematics and Data Science \\
Pleinlaan 2, 1050 Elsene, Belgium \\
\url{sam.adriaensen@vub.be}

\bigskip \filbreak

\noindent
Paolo Santonastaso \\
\textit{Università degli Studi della Campania ``Luigi Vanvitelli'' }\\
Dipartimento di Matematica e Fisica \\
Viale Lincoln, 5, I– 81100 Caserta, Italy \\
\url{paolo.santonastaso@unicampania.it}

\end{document}